\documentclass{amsart}
\usepackage{amsmath,amssymb,amscd,amsfonts}
\usepackage[all,cmtip]{xy}
\usepackage{graphicx}

\setlength{\topmargin}{-0.3in}
\setlength{\textwidth}{6.5in}
\setlength{\textheight}{9.9in}
\setlength{\oddsidemargin}{-0.1in}
\setlength{\evensidemargin}{-0.1in}
\setlength{\footskip}{0.5in}

\newtheorem{theorem}{Theorem}[section]

\newtheorem{thm}{Theorem}
\newtheorem{prop}[thm]{Proposition}
\newtheorem{lem}[thm]{Lemma}
\newtheorem{cor}[thm]{Corollary}

\theoremstyle{definition}

\theoremstyle{remark}
\newtheorem{remark}[theorem]{Remark}

\numberwithin{equation}{section}

\newcommand{\A}{\mathbb{A}}
\newcommand{\bK}{\mathbb{K}}
\newcommand{\Q}{\mathbb{Q}}
\newcommand{\R}{\mathbb{R}}
\newcommand{\Z}{\mathbb{Z}}

\newcommand{\cA}{\mathcal{A}}
\newcommand{\cB}{\mathcal{B}}
\newcommand{\cI}{\mathcal{I}}
\newcommand{\cO}{\mathcal{O}}
\newcommand{\cP}{\mathcal{P}}
\newcommand{\cT}{\mathcal{T}}
\newcommand{\cBT}{\mathcal{BT}}

\newcommand{\BM}{\mathrm{BM}}
\newcommand{\cl}{\mathrm{cl}}
\newcommand{\diag}{\mathrm{diag}}
\newcommand{\Ext}{\mathrm{Ext}}
\newcommand{\fin}{\mathrm{fin}}
\newcommand{\GL}{\mathrm{GL}}
\newcommand{\Hom}{\mathrm{Hom}}
\newcommand{\id}{\mathrm{id}}
\newcommand{\Ker}{\mathrm{Ker}}
\newcommand{\Lat}{\mathrm{Lat}}
\newcommand{\Latbar}{\overline{\mathrm{Lat}}}
\newcommand{\Map}{\mathrm{Map}}
\newcommand{\PGL}{\mathrm{PGL}}
\newcommand{\sgn}{\mathrm{sgn}}
\newcommand{\Tor}{\mathrm{Tor}}

\newcommand{\bsl}{\backslash}
\newcommand{\eps}{\epsilon}
\newcommand{\pmone}{{\{\pm 1\}}}

\newcommand{\inj}{\hookrightarrow}
\newcommand{\surj}{\twoheadrightarrow}
\newcommand{\resp}{resp.\ }
\newcommand{\xto}[1]{\xrightarrow{#1}}
\newcommand{\wt}[1]{\widetilde{#1}}
\newcommand{\wh}[1]{\widehat{#1}}

\newcommand{\BT}{|\cB\cT_\bullet|}
\newcommand{\BTbar}{\overline{|\cB\cT_\bullet|}}
\newcommand{\XbartimesEGamma}{\overline{|\cB\cT_\bullet|}\times |E\Gamma_\bullet|}
\newcommand{\GBT}{\Gamma \backslash \cB\cT_\bullet}
\newcommand{\GgBT}{\Gamma\backslash|\cB\cT_\bullet|}
\newcommand{\GgBTEG}{\GgBT\times |E\Gamma_\bullet|}
\newcommand{\GgbarBTEG}{\Gamma\backslash\cgBTdot \times |E\Gamma_\bullet|}
\newcommand{\gBTdot}{|\mathcal{BT}_\bullet|}
\newcommand{\cgBTdot}{\overline{|\mathcal{BT}_\bullet|}}

\newcommand{\C}{\mathbb{C}}
\newcommand{\Ind}{\mathrm{Ind}}
\newcommand{\St}{\mathrm{St}}
\newcommand{\cC}{\mathcal{C}}
\newcommand{\cH}{\mathcal{H}}
\newcommand{\Spec}{\mathrm{Spec}\,}
\newcommand{\cF}{\mathcal{F}}
\newcommand{\rank}{\mathrm{rank}}
\newcommand{\cL}{\mathcal{L}}
\newcommand{\Supp}{\mathrm{Supp}\,}
\newcommand{\cD}{\mathcal{D}}
\newcommand{\Mat}{\mathrm{Mat}}
\newcommand{\divi}{\mathrm{div}}
\newcommand{\Flag}{\mathrm{Flag}}
\newcommand{\Vertex}{\mathrm{Vert}}

\begin{document}

\title[modular symbols in positive characteristic]{The Borel-Moore homology of an arithmetic quotient of 
the Bruhat-Tits building of PGL of a 
non-archimedean local field in positive characteristic
and modular symbols}
\author{Satoshi Kondo, Seidai Yasuda}
\maketitle

\begin{abstract}We study the homology and the Borel-Moore homology
with coefficients in $\Q$ of a quotient (called arithmetic quotient)
of the Bruhat-Tits building of $\PGL$ of a nonarchimedean local field
of positive characteristic by
an arithmetic subgroup (a special case of the general definition
in Harder's article (Invent.\ Math.\ 42, 135-175 (1977)).

We define an analogue of modular symbols in this context
and show that the image of 
the canonical map from homology to Borel-Moore homology
is contained in the
sub $\Q$-vector space generated by the modular symbols.

By definition, the limit of the Borel-Moore homology as the arithmetic group
becomes small
is isomorphic to the space of $\Q$-valued automorphic forms 
that satisfy certain conditions at a distinguished (fixed) place
(namely that it is fixed by 
the Iwahori subgroup and the center at the place).
We show that the limit of the homology with $\C$-coefficients
is identified 
with the subspace consisting of cusp forms.
We also describe an irreducible subquotient of the 
limit of Borel-Moore homology as an induced representation
in a precise manner and give a multiplicity one type result.
\keywords{modular symbol \and arithmetic group \and 
Borel-Moore homology \and Bruhat-Tits building \and automorphic forms}
\end{abstract}
\section{Introduction}

Let us state our result slightly more precisely than in the abstract.
Let us give the setup.
We let $F$ denote a global field of positive characteristic.
Let $C$ be a proper smooth curve over a finite field
whose function field is $F$.
Let $\infty$ be a place of $F$ and let $K=F_\infty$ denote
the local field at $\infty$.

Let $\cBT_\bullet$ be the Bruhat-Tits building for 
$\PGL_d$ of $K$ for a positive integer $d$.
It is a simplicial complex of dimension $d-1$.
Let $\Gamma \subset \GL_d(K)$ be 
an arithmetic subgroup (see the definition
in Section~\ref{sec:4.1.1}).
We consider the homology,
the Borel-Moore homology,
and the canonical map from homology to Borel-Moore homology
of the quotient $\Gamma \backslash \cBT_\bullet$.

A building is made of (in particular, a union of)
subsimplicial complexes called apartments.
These are labeled (not one-to-one) by  
the set of bases of $K^{\oplus d}$.
In the Borel-Moore homology of an apartment,
there
is defined its fundamental class.
When the class corresponds to an $F$-basis, 
using the pushforward map for Borel-Moore 
homology, we obtain a class in the Borel-Moore homology
of the quotient $\GBT$.    
We regard this class 
as an analogue of a modular symbol.
The first of our main results in its rough form may be 
stated as follows.  See Theorem~\ref{lem:apartment} for
the precise form.
\begin{thm}
\label{thm:intro}
The image of the canonical injective homomorphism 
\[
H_{d-1}(\Gamma\backslash \cBT_\bullet, \Q)
\to 
H_{d-1}^\BM(\Gamma\backslash \cBT_\bullet, \Q)
\]
is contained in the subspace generated by the 
classes of the apartments, which correspond to
$F$-bases.
\end{thm}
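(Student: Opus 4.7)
Since $\cBT_\bullet$ is $(d-1)$-dimensional, both $H_{d-1}(\GBT,\Q)$ and $H_{d-1}^\BM(\GBT,\Q)$ are the kernels of the top simplicial boundary operator on $\Gamma$-coinvariants of finite (resp.\ locally finite) chains on $\cBT_\bullet$, and the canonical map is induced by the inclusion of finite into locally finite chains. My plan is to represent a class in $H_{d-1}(\GBT,\Q)$ by a $\Gamma$-invariant finite cycle $\widetilde c$ on $\cBT_\bullet$ and to show that $\widetilde c$ is congruent, modulo a $\Gamma$-invariant locally finite boundary, to a finite $\Q$-linear combination of $\Gamma$-orbit sums of apartment fundamental classes $[A_v]$ for $F$-bases $v$.

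The core construction is orbit-by-orbit removal. For each $\Gamma$-orbit of top-dimensional simplices $\Gamma\cdot\sigma$ appearing with multiplicity $n$ in $\widetilde c$, I would pick an $F$-basis $v$ whose apartment contains $\sigma$; this is possible because every simplex lies in some apartment and the $F$-rational points are dense in the set of $K$-bases whose associated apartment contains $\sigma$, which is a non-empty open subset of the complete flag variety over $K$. Subtracting $n$ times the $\Gamma$-orbit-sum class of $[A_v]$ from $\widetilde c$ eliminates $\Gamma\cdot\sigma$ from the support but introduces new $\Gamma$-orbits of top simplices coming from the remainder of $A_v$. These new simplices are concentrated along the rays of $A_v$ going out to infinity, which project in $\GBT$ to the cuspidal regions determined by the $F$-rational flag associated to $v$.

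The main obstacle is that this orbit-removal procedure need not terminate in finitely many steps, since each subtraction creates fresh orbits near the ends of $\GBT$. To close the induction I would invoke Harder's reduction theory for $\Gamma$: the quotient $\GBT$ decomposes into a compact core plus cuspidal ends indexed by $\Gamma$-conjugacy classes of proper $F$-rational parabolic subgroups of $\GL_d$, and near each cuspidal end the quotient has an explicit tubular structure whose transverse slices are the images of $F$-apartments through the corresponding $F$-rational flag. The fact that $\widetilde c$ is compactly supported modulo $\Gamma$ forces its cuspidal contribution at every end to vanish; this vanishing provides exactly the relations needed to absorb the accumulated cuspidal tails of the $F$-apartment subtractions into a $\Gamma$-invariant locally finite boundary, terminating the induction. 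Quantifying this cancellation at the cusps---verifying that the ``boundary data'' at infinity of the $F$-apartment combination matches that of the (vanishing) cuspidal contribution of $\widetilde c$ end by end---will be the technical heart of the argument.
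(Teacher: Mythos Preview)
Your approach is different from the paper's and has a genuine gap. First, a small but revealing slip: since $\cBT_\bullet$ has dimension $d-1$ there are no $d$-chains, so ``modulo a $\Gamma$-invariant locally finite boundary'' is vacuous; you must show an \emph{exact} equality of locally finite $(d-1)$-chains on $\GBT$ between your cycle and a \emph{finite} $\Q$-combination of apartment classes. (Relatedly, a nonzero $\Gamma$-invariant chain on $\cBT_\bullet$ cannot have finite support, since simplex stabilizers are finite; presumably you mean a chain with finitely many $\Gamma$-orbits in its support.) More seriously, you correctly identify that orbit-by-orbit removal does not terminate and that the cuspidal cancellation is the ``technical heart,'' but you do not supply it. After finitely many subtractions the remainder is an infinite locally finite cycle supported in the ends, and you need it to be \emph{zero}---not a boundary, since there are none---which is a nontrivial statement about the Borel--Moore $(d-1)$-cycles on the ends that reduction theory by itself does not furnish. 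As written this is a strategy, not a proof.

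The paper circumvents termination altogether by passing to Werner's seminorm compactification $\BTbar$ and to equivariant homology. The key is an explicit chain map from the bar complex of $\Gamma$ to singular chains on $\BTbar \times |E\Gamma_\bullet|$, sending a tuple $(g_0,\dots,g_{d-1}) \in \Gamma^d$ to the singular simplex $t \mapsto \bigl(s(g_0 v,\dots,g_{d-1} v)(t),\ [g_0,\dots,g_{d-1}](t)\bigr)$, where $s(w_0,\dots,w_r)(t)$ is the seminorm $f \mapsto \sup_i |f(w_i)|\, q_\infty^{-1/t_i}$ on $V^*$. Both sides are free $\Z[\Gamma]$-resolutions of $\Z$, so this is a quasi-isomorphism; choosing $v \in F^{\oplus d}\setminus\{0\}$, its image in $H_{d-1}^\Gamma(\BTbar,\partial\BTbar;\Q)$ is manifestly spanned by classes attached to $F$-bases, because a tuple $g_0 v,\dots,g_{d-1}v$ that fails to be a basis has its image under $s$ contained in $\partial\BTbar$ and hence dies in relative homology. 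A diagram chase then carries this over to the map $H_{d-1}(\GBT,\Q) \to H_{d-1}^{\BM}(\GBT,\Q)$. The compactification packages all the behavior at infinity in one step, doing the work your reduction-theoretic cancellation would have had to do.
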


We refer the reader to the nice introduction in \cite{AR}
for a short exposition on the classical modular symbols.
In the paper \cite{AR}, they consider (the analogue of) the
modular symbols for the quotient by some congruence 
subgroup of $\mathrm{SL}_d(\R)$.  Our result may be 
regarded as a non-archimedean analogue of a part of 
their result, especially Proposition 3.2, p.246.  

Let us remark on the difference from the 
archimedean case and the difficulty in our case.
In \cite{AR}, 
they use Borel-Serre bordification
(compactification), on which the group $\Gamma$
acts freely.  In the non-archimedean case, 
there exist several compactifications
of (the geometric realization of) 
the Bruhat-Tits building of $\PGL$
(the reader is referred to the introduction in \cite{We2}),
but the action is not free.
We therefore work with the equivariant (co)homology groups.

There is a reason to use Werner's compactification 
and not other compactifications.   The key 
in the proof of our result is the explicit 
construction of 
the map (4) of 
Section~\ref{sec:4 and 5} 
at the level of chain complexes in the indicated 
direction.  
We do not have an analogous construction
for other compactifications, 
since for the construction we use  
the continuous map $s(v_1,\dots, v_d)$
(see Lemma~\ref{lem:cont s}) 
which uses the interpretation of the 
geometric points of the compactification
of the Bruhat-Tits building
as the set of semi-norms.
A further advantage is that the map
$s(v_1,\dots, v_d)\times [g_0,\dots, g_{d-1}]$
readily defines a class in the equivariant
homology.

The modular symbols considered here have an 
application in our other 
paper \cite{KY:Zeta elements}.  
The setup and 
the statements in Sections~\ref{sec:2}
and~\ref{sec:BT} of this article
already appeared there.
We reproduce them here for convenience. 
That paper uses the results 
Lemma~\ref{lem:arithmetic} and 
Theorem~\ref{lem:apartment}.

We turn to our second result.
We let $A=H^0(C \setminus \{\infty\}, \cO_C)$.
Here we identified a closed point of $C$ and
a place of $F$.
We write $\wh{A}=\varprojlim_I A/I$, where
the limit is taken over the nonzero ideals of $A$.
We let  $\A^\infty=\wh{A} \otimes_A F$ denote the ring of 
finite adeles.
For an open compact subgroup
$\bK \subset \GL_d(\A^\infty)$,
let $X_{\bK, \bullet}= \GL_d(F) \backslash (\GL_d(\A^\infty)/\bK \times \cBT_\bullet)$
(see Section~\ref{subsec:X_K} for the precise definition).
It is the finite disjoint union of spaces of the form $\Gamma \backslash \cBT_\bullet$
for some arithmetic subgroup $\Gamma$.
Then we have the following result.  See Proposition~\ref{prop:66_3} and 
Theorem~\ref{7_prop1} for the relevant notation.
\begin{thm}
\label{thm:intro2}
\begin{enumerate}
\item 
We have 
\[
\varinjlim_\bK 
H_{d-1}(X_{\bK, \bullet}, \C)
\cong
\bigoplus_\pi \pi^\infty\]
as representations of $\GL_d(\A)$
where $\pi=\pi^\infty \otimes \pi_\infty$
runs over the irreducible cuspidal automorphic 
representations of 
$\GL_d(\A)$ such that $\pi_\infty$ is isomorphic to the Steinberg representation of $\GL_d(K)$.

\item
Let $\pi = \pi^\infty \otimes \pi_\infty$ be an irreducible
smooth representation of $\GL_d(\A^\infty)$
such that $\pi^\infty$ appears as a subquotient of
$\varinjlim_\bK H_{d-1}^\mathrm{BM}(X_{\bK,\bullet},\C)$.
Then there exist an integer $r \ge 1$, 
a partition $d=d_1 + \cdots + d_r$ of $d$,
and irreducible cuspidal automorphic representations $\pi_i$
of $\GL_{d_i}(\A)$ for $i=1,\ldots,d$ which satisfy
the following properties:
\begin{enumerate}
\item For each $i$ with $0 \le i \le r$,
the component $\pi_{i,\infty}$ at $\infty$ of $\pi_i$ is 
isomorphic to the
Steinberg representation of $\GL_{d_i}(F_\infty)$.
\item Let us write $\pi_i = \pi_i^\infty \otimes \pi_{i,\infty}$.
Let $P \subset \GL_d$ denote the standard parabolic subgroup
corresponding to the partition $d=d_1 + \cdots + d_r$.
Then $\pi^\infty$ is isomorphic to a subquotient of the unnormalized 
parabolic induction $\Ind_{P(\A^\infty)}^{\GL_d(\A^\infty)}
\pi_1^\infty \otimes \cdots \otimes \pi_r^\infty$.
\end{enumerate}
Moreover for any subquotient $H$ of $\varinjlim_\bK H_{d-1}^\mathrm{BM}(X_{\bK,\bullet},\C)$
which is of finite length as a representation of $\GL_d(\A^\infty)$,
the multiplicity of $\pi$ in $H$ is at most one.
\end{enumerate}
\end{thm}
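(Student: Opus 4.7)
The plan is to bridge topology with the spectral theory of automorphic forms on $\GL_d(\A)$, via the identification (announced in Proposition~\ref{prop:66_3}) of $\varinjlim_\bK H_{d-1}^{\BM}(X_{\bK,\bullet},\C)$ with the space $\Ac$ of $\C$-valued automorphic forms on $\GL_d(\A)$, trivial on $Z(F_\infty)$, whose $\infty$-component is Iwahori-fixed and projects onto the Steinberg line under the Iwahori-Hecke algebra action.

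For part (1) I would prove $\varinjlim_\bK H_{d-1}(X_{\bK,\bullet},\C)=\Ac^{\mathrm{cusp}}$. The easy inclusion uses that a cusp form on $\GL_d(F)Z(\A)\backslash\GL_d(\A)$ has compact support (Harder's reduction theory), so it defines a compactly supported cycle on each $X_{\bK,\bullet}$ and hence a class in ordinary homology. For the reverse inclusion I would invoke Theorem~\ref{lem:apartment}: the image of homology inside $H_{d-1}^{\BM}$ is spanned by modular symbols attached to $F$-rational apartments; pairing such a symbol against a truncated Eisenstein series attached to a proper standard parabolic $P$ unfolds to an integral of the inducing cusp datum along $N_P(F)\backslash N_P(\A)$, which vanishes by cuspidality. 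Hence the image is orthogonal to the Eisenstein spectrum. Combined with multiplicity one for cusp forms on $\GL_d$, and the Borel-Casselman description identifying $\St$ as the unique irreducible admissible representation of $\GL_d(K)$ whose Iwahori-fixed vectors hit the sign-character line of the finite Weyl group, this yields the direct sum decomposition in (1).

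For part (2), given an irreducible smooth $\pi^\infty$ appearing as a subquotient of $\varinjlim_\bK H_{d-1}^{\BM}$, the automorphic identification realizes $\pi^\infty\otimes\St$ as a subquotient of the full automorphic spectrum of $\GL_d(\A)$. By Langlands' spectral decomposition transposed to the function-field case, this subquotient lies inside $\Ind_{P(\A)}^{\GL_d(\A)}\sigma_1\otimes\cdots\otimes\sigma_r$ for some standard parabolic $P$ with Levi $\GL_{d_1}\times\cdots\times\GL_{d_r}$ and cuspidal data $\sigma_i$ on $\GL_{d_i}(\A)$. Zelevinsky's classification of the Iwahori-spherical dual of $\GL_{d_i}(K)$, together with the requirement that the $\infty$-component of the induced representation admit the Steinberg of $\GL_d(K)$ as a subquotient, forces each $\sigma_{i,\infty}\cong\St_{\GL_{d_i}(K)}$ after absorbing central characters. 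The multiplicity-one assertion is the finite-adelic shadow of strong multiplicity one (Jacquet-Shalika, Piatetski-Shapiro): the cuspidal data $(\sigma_1,\ldots,\sigma_r)$ is determined by $\pi^\infty$ up to permutation of the factors, so $\pi^\infty$ occurs with multiplicity at most one in any finite-length subquotient.

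The main obstacle will be Step 2 of part (1). In the archimedean analogue \cite{AR} the Borel-Serre compactification is a manifold with corners on which $\Gamma$ acts freely, which streamlines the cuspidality argument considerably; here Werner's compactification has non-free stabilizers, forcing an equivariant treatment throughout. The combinatorial access to the image of homology provided by Theorem~\ref{lem:apartment} is what makes the Eisenstein-pairing computation feasible. A secondary technical point in part (2) is establishing the Langlands decomposition in exactly the form needed in positive characteristic at the distinguished place $\infty$; a direct Hecke-algebra argument at $\infty$, exploiting that only the Steinberg-component projector survives, may be cleaner than invoking the general spectral theory.
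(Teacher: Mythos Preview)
Your proposal diverges from the paper in both parts, and in part~(2) there is a genuine gap.

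\textbf{Part (1).} The paper does \emph{not} use Theorem~\ref{lem:apartment} here; in fact the introduction states explicitly that Sections~\ref{section8} and~\ref{sec:BMquot} are independent of Section~\ref{Modular Symbols}. Instead, the paper identifies $H_{d-1}(X_{\lim,\bullet},\C)\otimes\St_d$ with a subspace $\cA\subset\cC_c(\C)\subset L^2$, observes that $\cA$ is admissible (Lemma~\ref{7_lem1}), hence lies in the discrete spectrum, and then invokes the Moeglin--Waldspurger classification \cite{MW} to rule out the residual spectrum (since $\pi_\infty\cong\St_d$). Your modular-symbol/Eisenstein-pairing route may be workable, but it is considerably more intricate than needed and the unfolding computation you sketch is not carried out.

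\textbf{Part (2).} Here there is a real gap. You claim that once $\pi^\infty\otimes\St_d$ is realized as a subquotient of $\Ind_{P(\A)}^{\GL_d(\A)}\sigma_1\otimes\cdots\otimes\sigma_r$ via Langlands' decomposition, the requirement that $\St_d$ appear at $\infty$ forces each $\sigma_{i,\infty}\cong\St_{d_i}$. This is false: $\St_d$ is a subquotient of the full unramified principal series, so $\sigma_{i,\infty}$ could be spherical. The paper itself notes in the introduction that Langlands' theorem gives part~(2) \emph{without} Condition~(a); Condition~(a) is the new content, and it comes from the paper's geometric approach. Concretely, the paper stratifies $X_{\bK,\bullet}$ by Harder--Narasimhan polygons, proves a contractibility lemma (Lemma~\ref{7_contract}) \`a la Grayson \cite{Gra}, and shows via a covering spectral sequence that the boundary cohomology is parabolically induced from $\bigotimes_j H^{d_j-1}(X_{\GL_{d_j},\lim,\bullet},\C)$ (Proposition~\ref{7_prop2}). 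Applying part~(1) to each $\GL_{d_j}$ then gives Condition~(a) automatically.

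\textbf{Multiplicity one.} Your argument is also incomplete here. Strong multiplicity one determines the cuspidal data once the parabolic is fixed, but you have not shown that the partition $d=d_1+\cdots+d_r$ itself is determined by $\pi^\infty$. The paper uses Lafforgue's Ramanujan bound to read off the $d_i$ from the absolute values of the Satake parameters at unramified places, and then needs a further local argument (Zelevinsky \cite[9.13]{Zelevinsky} plus temperedness) to show that the induced representation at each place is multiplicity-free.
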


Theorem~\ref{thm:intro2} (1) follows from a result of Harder \cite{Harder}
and the classification theorem due to Moeglin and Waldspurger \cite{MW}
(see Remark~\ref{rmk:HMW} for a sketch).
We give another proof which does not rely on Harder's result.
Theorem~\ref{thm:intro2} (2) without Condition (a) follows from a 
general theorem of Langlands.
In a forthcoming paper by the second author,
an application of this result 
(using Condition (a) and the multiplicity one)
will be given.  
The multiplicity result seems new in that, although we have a restriction that $\pi_\infty$ is isomorphic to the Steinberg representation, a subquotient of the Borel-Moore homology may not be contained in the discrete part of $L^2$-automorphic forms.
For the proof of Theorem~\ref{thm:intro2}(2),
we study the quotient building using the interpretation as the 
moduli of vector bundles with certain structures on $C$.
The tools that appear are the same as in \cite{Gra}
but we actually study the quotient space while in \cite{Gra}
they consider only some orbit spaces since they ask only for
finite generation of cohomology groups.   

One technical problem, 
which was already addressed by Prasad
in the paper by Harder \cite[p.140, Bemerkung]{Harder}, 
is that the 
quotient of the Bruhat-Tits building may not be a simplicial 
complex.  In Section~\ref{sec:2} we give 
a generalization of the notion of simplicial complexes
so as to include those quotients.

We warn that our use of the term Borel-Moore homology
is not a common one.  We give some justification 
in Section~\ref{sec:cellular}.

The paper is organized as follows.
In Section~\ref{sec:simplicial complex}, 
we consider simplicial complexes.  Actually,
we generalize the definition of a simplicial 
complex in the usual sense (which we call 
strict simplicial complex).  One reason for
doing so is that while the Bruhat-Tits building 
itself is canonically a (strict) simplicial complex,
the quotient is not so canonically.   
We also redefine (co)homology in an orientation
free manner.  This is because  
the Bruhat-Tits building is not 
canonically oriented.
In Section~\ref{sec:BT}, 
we recall the definitions of the Bruhat-Tits 
building and the apartments. 
Besides the recollections, we give a 
construction of the fundamental class of
an apartment in the Borel-Moore homology
group.  This serves as an analogue of a cycle
(from 0 to $i\infty$ in the upper half plane,
for example) in the classical case.
In Section~\ref{sec:71}, we give the definition
and some properties of an arithmetic group.
The first of our main results 
(Theorem~\ref{lem:apartment}) 
is stated in this section.
Section~\ref{Modular Symbols} is devoted  
to the proof of Theorem~\ref{lem:apartment}.
The key to the proof is the construction 
of the maps (4) and (5) in Section~\ref{sec:4 and 5}.

Sections~\ref{section8} and~\ref{sec:BMquot}
are devoted to Theorem~\ref{thm:intro2},
and are independent of Section~\ref{Modular Symbols}.
We give the definition of the simplicial complex
$X_{\bK, \bullet}$ and make precise the relation between the
limit of the (Borel-Moore) homology of $X_{\bK,\bullet}$
as $\bK$ becomes small and the space 
of ($\Q$-valued) automorphic forms.
The main result of Section~\ref{section8}
is Proposition~\ref{7_prop2}.
In Section~\ref{sec:BMquot}, we prove Theorem~\ref{thm:intro2}(2),
or Theorem~\ref{7_prop1}.
The contents of Sections~\ref{sec:locfree}
and~\ref{sec:HN}
are reformulation of \cite{Gra}.
The aim of Section~\ref{sec:props} is to 
state Propositions~\ref{7_prop1b} and~\ref{7_prop2}.
The proofs are given in Section~\ref{sec:pf1b} and in
Section~\ref{sec:pf2} respectively.
The proof of Theorem~\ref{7_prop1} using
Propositions~\ref{7_prop1b} and~\ref{7_prop2}
is given in Section~\ref{sec:pfthm}.

\section{Simplicial complexes and their (co)homology}
\label{sec:2}
The material of this section (except for the remark in Section~\ref{sec:cellular})
appeared in Sections 3 and 5 of \cite{KY:Zeta elements}.
We collected them for the convenience of readers.

\subsection{Simplicial complexes}
\label{sec:simplicial complex}
\subsubsection{}
Let us recall the notion of (abstract) simplicial complex.
A simplicial complex is a pair $(Y_0,\Delta)$ of a set $Y_0$
and a set $\Delta$ of finite subsets of $Y_0$ which satisfies
the following conditions:
\begin{itemize}
\item If $S \in \Delta$ and $T\subset S$, then $T \in \Delta$.
\item If $v \in Y_0$, then $\{v \} \in \Delta$.
\end{itemize}
In this paper we call a simplicial complex in the sense above
a strict simplicial complex, and use the terminology 
``simplicial complex" in a little broader sense,
since we will treat as simplicial complexes
some arithmetic quotients of Bruhat-Tits building, in which
two different simplices may have the same set of vertices.
Bruhat-Tits building itself is a strict simplicial complex.
Our primary example of a (nonstrict) simplicial complex
is $\Gamma \backslash \cBT_\bullet$ for 
an arithmetic group $\Gamma$ 
(to be defined in Section~\ref{sec:def Gamma}).
%
%

We adopt the following definition of a simplicial complex:
a simplicial complex is a collection 
$Y_\bullet = (Y_i)_{i \ge 0}$ of the sets 
indexed by non-negative integers, equipped with
the following additional data
\begin{itemize}
\item a subset $V(\sigma) \subset Y_0$ 
with cardinality $i+1$, for each $i \ge 0$ and
for each $\sigma \in Y_i$
(we call $V(\sigma)$ the set of vertices of $\sigma$),
and
\item an element in $Y_j$, for each $i \ge j \ge 0$, 
for each $\sigma \in Y_i$,
and for each subset $V' \subset V(\sigma)$ with cardinality
$j+1$ (we denote this element in $Y_j$ by the symbol
$\sigma \times_{V(\sigma)} V'$ and call it the face of
$\sigma$ corresponding to $V'$)

\end{itemize}
which satisfy the following conditions:
\begin{itemize}
\item For each $\sigma \in Y_0$, the equality
$V(\sigma) = \{\sigma\}$ holds,
\item For each $i \ge 0$, for each $\sigma \in Y_i$,
and for each non-empty subset $V' \subset V(\sigma)$,
the equality $V(\sigma \times_{V(\sigma)} V') = V'$ holds.
\item For each $i \ge 0$ and for each $\sigma \in Y_i$,
the equality $\sigma \times_{V(\sigma)} V(\sigma) = \sigma$
holds, and
\item For each $i \ge 0$, for each $\sigma \in Y_i$,
and for each non-empty subsets $V', V'' \subset V(\sigma)$
with $V'' \subset V'$, the equality 
$(\sigma \times_{V(\sigma)} V')\times_{V'} V'' = 
\sigma \times_{V(\sigma)} V''$ holds.
\end{itemize}
Let us call the element of the form 
$\sigma\times_{V(\sigma)} V'$ for $j$ and $V'$
as above, 
the $j$-dimensional face of $\sigma$ corresponding to $V'$.
We remark here that the symbol $\times_{V(\sigma)}$
does not mean a fiber product in any way.

Any strict simplicial complex gives a simplicial 
complex in the sense above in the following way.
Let $(Y_0,\Delta)$ be a strict simplicial complex.
We identify $Y_0$ with the set of subsets of $Y_0$
with cardinality $1$.
For $i \ge 1$ let $Y_i$ denote the set of the elements
in $\Delta$ which has cardinality $i+1$ as a subset of $Y_0$.
For $i \ge 1$ and for $\sigma \in Y_i$, 
we set $V(\sigma)= \sigma$ regarded
as a subset of $Y_0$. 
%
For a non-empty subset $V \subset V(\sigma)$,
of cardinality $i'+1$, we set 
$\sigma \times_{V(\sigma)} V = V$ regarded
as an element of $Y_{i'}$. 
Then it is easily checked that 
the collection $Y_\bullet = (Y_i)_{i \ge 0}$ 
together with the assignments $\sigma \mapsto V(\sigma)$
and $(\sigma, V) \mapsto \sigma \times_{V(\sigma)} V$
forms a simplicial complex.

Let $Y_\bullet$ and $Z_\bullet$ be simplicial complexes.
We define a map from $Y_\bullet$ to $Z_\bullet$ to
be a collection 
$f=(f_i)_{i \ge 0}$ of maps $f_i : Y_i \to Z_i$ of sets 
which satisfies the following
conditions:
\begin{itemize}
\item for any
$i \ge 0 $ and for any $\sigma \in Y_i$, 
the restriction of $f_0$ to $V(\sigma)$ is injective
and the image of $f|_{V(\sigma)}$ is equal to the set
$V(f_i(\sigma))$, and 
\item for any
$i \ge j \ge 0$, for any $\sigma \in Y_i$, and for any
non-empty subset $V' \subset V(\sigma)$ with cardinality $j+1$
we have $f_j(\sigma \times_{V(\sigma)} V')
= f_i(\sigma) \times_{V(f_i(\sigma))} f_0(V')$.
\end{itemize}
\subsubsection{}
There is an alternative, less complicated, 
equivalent definition of a simplicial complex 
in the sense above, which we will describe in this
paragraph. As it will not be used in this article, 
the reader may skip this paragraph.
For a set $S$, let $\cP^\fin(S)$ denote the
category whose object are the non-empty finite subsets of
$S$ and whose morphisms are the inclusions.
Then giving a simplicial complex in our sense
is equivalent to giving a pair $(Y_0,F)$ of a set $Y_0$
and a presheaf $F$ of sets on $\cP^\fin(Y_0)$
such that $F(\{\sigma \}) = \{\sigma\}$ holds for every
$\sigma \in Y_0$. This equivalence is explicitly
described as follows: given a simplicial complex $Y_\bullet$, 
the corresponding $F$ is the presheaf 
which associates, to a non-empty finite subset $V \subset Y_0$
with cardinality $i+1$, the set of elements $\sigma \in Y_i$
satisfying $V(\sigma)=V$.

This alternative definition of a simplicial complex is smarter,
nevertheless we have adopted the former definition
since it is nearer to the definition 
of a simplicial complex in the usual sense.

\subsection{Homology and cohomology}
\label{sec:BM homology}

Usually the homology groups of $Y_\bullet$ are defined 
to be the homology groups of a complex $C_\bullet$ 
whose component in degree $i$ is the free abelian group 
generated by the $i$-simplices of $Y_\bullet$. 
For a precise definition of the boundary homomorphism
of the complex $C_\bullet$, 
we need to choose an orientation of each simplex. 
In this paper we adopt an alternative, 
equivalent definition of homology groups which 
does not require any choice of orientations. 
The latter definition seems a little complicated at first glance, 
however it will soon turn out to be a better way 
for describing the (co)homology of the arithmetic quotients 
Bruhat-Tits building, which seems to have no canonical, 
good choice of orientations.

\subsubsection{Orientation}
\label{sec:orientation}
We recall in 
Sections~\ref{sec:orientation} and~\ref{sec:def homology} 
the precise definitions of the (co)homology,
the cohomology with compact support and 
the Borel-Moore homology of a simplicial complex.
When computing (co)homology, one usually fixes an orientation
of each simplex once and for all, but we do not.
This results in an apparently different definition,
but they indeed agree with the usual definition.

We introduce the notion of orientation of a simplex.
Let $Y_\bullet$ be a simplicial complex
and let $i \ge 0$ be a non-negative integer.
For an $i$-simplex $\sigma \in Y_i$,
we let $T(\sigma)$ denote the set of all bijections
from the finite set $\{1,\ldots, i+1 \}$ of cardinality
$i+1$ to the set $V(\sigma)$ of vertices of $\sigma$.
The symmetric group $S_{i+1}$ acts on the set 
$\{1,\ldots, i+1 \}$ from the left 
and hence on the set $T(\sigma)$ from the right.
Through this action the set $T(\sigma)$ is
a right $S_{i+1}$-torsor.

We define the set $O(\sigma)$ of orientations of $\sigma$
to be the $\pmone$-torsor 
$O(\sigma) = T(\sigma) \times_{S_{i+1},\sgn} \pmone$ 
which is the push-forward 
of the $S_{i+1}$-torsor $T(\sigma)$ 
with respect to the signature character 
$\sgn: S_{i+1} \to \pmone$.
When $i \ge 1$, the $\pmone$-torsor $O(\sigma)$ is
isomorphic, as a set, to the quotient 
$T(\sigma)/A_{i+1}$ of $T(\sigma)$
by the action of the alternating group 
$A_{i+1} = \Ker\, \sgn \subset S_{i+1}$. 
When $i=0$, the $\pmone$-torsor $O(\sigma)$ 
is isomorphic to the product
$O(\sigma) = T(\sigma) \times \pmone$, on which the group $\pmone$
acts via its natural action on the second factor.

Let $i \ge 1$ and let $\sigma \in Y_i$.
For $v \in V(\sigma)$ let $\sigma_v$ denote the
$(i-1)$-simplex 
$\sigma_v 
= \sigma \times_{V(\sigma)} (V(\sigma) \setminus \{v\})$.
There is a canonical map
$s_v : O(\sigma) \to O(\sigma_v)$ of 
$\pmone$-torsors defined as follows.
Let $\nu \in O(\sigma)$ and take a lift
$\wt{\nu}:\{1,\ldots,i+1\} \xto{\cong} V(\sigma)$
of $\nu$ in $T(\sigma)$. Let 
$\wt{\iota}_v : \{1,\ldots,i\} \inj \{1,\ldots,i+1\}$
denote the unique order-preserving injection
whose image is equal to $\{1,\ldots,i+1\} \setminus 
\{\wt{\nu}^{-1}(v)\}$. It follows from the
definition of $\wt{\iota}_v$ that the composite
$\wt{\nu} \circ \wt{\iota}_v: \{1,\ldots,i\} 
\to V(\sigma)$ induces a bijection
$\wt{\nu}_v : \{1,\ldots,i\}
\xto{\cong} V(\sigma) \setminus \{v\}
= V(\sigma_v)$. We regard $\wt{\nu}_v$
as an element in $T(\sigma_v)$. 
We define $s_v : O(\sigma) \to O(\sigma_v)$
to be the map which sends $\nu \in O(\sigma)$
to $(-1)^{\wt{\nu}^{-1}(v)}$ times
the class of $\wt{\nu}_v$. It is easy to check that
the map $s_v$ is well-defined.

Let $i \ge 2$ and $\sigma \in Y_i$.
Let $v, v' \in V(\sigma)$ with $v \neq v'$.
We have $(\sigma_v)_{v'} = (\sigma_{v'})_v$.
Let us consider the two composites
$s_{v'} \circ s_v :
O(\sigma) \to O((\sigma_v)_{v'})$ and
$s_v \circ s_{v'} :
O(\sigma) \to O((\sigma_{v'})_v)$.
It is easy to check that the equality
\begin{equation} \label{formula1}
s_{v'} \circ s_v (\nu) 
= (-1) \cdot s_v \circ s_{v'} (\nu)
\end{equation}
holds for every $\nu \in O(\sigma)$.

\subsubsection{Cohomology and homology}
\label{sec:def homology}
We say that a simplicial complex
$Y_\bullet$ is locally finite if for any $i \ge 0$
and for any $\tau \in Y_i$, there exist only
finitely many $\sigma \in Y_{i+1}$ such that
$\tau$ is a face of $\sigma$.
We recall the four notions of homology or cohomology
for a locally finite simplicial complex.
Let $Y_\bullet$ be a simplicial complex 
(\resp a locally finite simplicial complex).
For an integer $i\ge 0$, we let 
$Y_i'=\coprod_{\sigma \in Y_i} O(\sigma)$ 
and regard it as a $\pmone$-set.
Given an abelian group $M$, 
we regard the abelian groups
$\bigoplus_{\nu \in Y_i'} M$
and $\prod_{\nu \in Y_i'}M$
as $\pmone$-modules in such a way that
the component at $\eps\cdot \nu$ of
$\eps \cdot (m_\nu)$ is equal to 
$\eps m_\nu$ for $\eps \in \pmone$
and for $\nu \in Y_i'$.

For $i \ge 1$, we let 
$\wt{\partial}_{i,\oplus}: \bigoplus_{\nu \in Y_i'}M
\to \bigoplus_{\nu \in Y_{i-1}'}M$ 
(\resp $\wt{\partial}_{i,\prod}: \prod_{\nu \in Y_i'}M
\to \prod_{\nu \in Y_{i-1}'}M$)
denote the homomorphism of abelian groups 
which sends
$m = (m_\nu)_{\nu \in Y_i'}$ to the element
$\wt{\partial}_i(m)$ whose coordinate 
at $\nu' \in O(\sigma') \subset Y_{i-1}'$ is equal to
\begin{eqnarray}\label{eqn:boundary}
\wt{\partial}_i(m)_{\nu'} =
\sum_{(v,\sigma,\nu)} m_\nu
\end{eqnarray}
where in the sum in the right hand side
$(v,\sigma,\nu)$ runs over the triples
of $v \in Y_0 \setminus V(\sigma')$,
an element $\sigma \in Y_i$, and $\nu \in O(\sigma)$
which satisfy $V(\sigma) = V(\sigma') \amalg \{v\}$ and
$s_v(\nu) = \nu'$.
Note that the sum on 
the right hand side is a finite sum
for $\wt{\partial}_{i,\oplus}$ by definition.
One can see also that the sum is a finite sum
in the case of $\wt{\partial}_{i,\prod}$ 
using the locally finiteness of $Y_\bullet$.
Each of $\wt{\partial}_{i,\oplus}$ 
and $\wt{\partial}_{i,\prod}$ is a
homomorphism of $\pmone$-modules. 
Hence it induces a homomorphism
$\partial_{i,\oplus} : (\bigoplus_{\nu \in Y_i'}M)_\pmone
\to (\bigoplus_{\nu \in Y_{i-1}'}M)_\pmone$
(\resp $\partial_{i,\prod} : (\prod_{\nu \in Y_i'}M)_\pmone
\to (\prod_{\nu \in Y_{i-1}'}M)_\pmone$) of abelian groups,
where the subscript $\pmone$ means the coinvariants.
It follows from the formula (\ref{formula1}) 
and the definition
of $\partial_{i,\oplus}$ and $\partial_{i,\prod}$ that
the family of abelian groups
$((\bigoplus_{\nu \in Y_i'}M)_\pmone)_{i\ge 0}$
(resp.
$((\prod_{\nu \in Y_i'}M)_\pmone)_{i\ge 0}$)
indexed by the integer $i \ge 0$, together with
the homomorphisms $\partial_{i,\oplus}$ 
(resp. $\partial_{i,\prod}$)
for $i \ge 1$,
forms a complex of abelian groups.
The homology groups of this complex are 
the homology groups $H_*(Y_\bullet, M)$
(resp. the Borel-Moore homology groups
$H_*^\BM(Y_\bullet, M)$)
of $Y_\bullet$ 
with coefficients in $M$.
We note that there is a canonical map
$H_*(Y_\bullet, M) \to H_*^\BM(Y_\bullet, M)$
from homology to Borel-Moore homology
induced by the map 
of complexes
$((\bigoplus_{\nu \in Y_i'}M)_\pmone)_{i\ge 0}
\to ((\prod_{\nu \in Y_i'}M)_\pmone)_{i\ge 0}$
given by inclusion at each degree.

The family of abelian groups 
$(\Map_{\pmone}(Y_i', M))_{i\ge 0}$
(resp.
$(\Map^{\mathrm{fin}}_{\pmone}(Y_i', M))_{i\ge 0}$
where the superscript $\mathrm{fin}$ means finite support)
of the $\pmone$-equivariant maps from $Y_i'$ to $M$
forms a complex of
abelian groups in a similar manner.
(One uses the locally finiteness of $Y_\bullet$
for the latter.)  
The cohomology groups of this complex
are the cohomology groups $H^*(Y_\bullet, M)$
(resp. the cohomology groups with compact support 
$H_c^*(Y_\bullet, M)$)
of $Y_\bullet$ 
with coefficients in $M$.

\subsubsection{Universal coefficient theorem}
\label{univ_coeff}
It follows from the definition that
the following universal coefficient
theorem holds.  
That is, for a simplicial complex $Y_\bullet$, 
there exist canonical short exact sequences
$$
0 \to H_i(Y_\bullet, \Z) \otimes M
\to H_i(Y_\bullet, M) \to 
\Tor_1^\Z (H_{i-1}(Y_\bullet, \Z),M) \to 0
$$
and
$$
0 \to \Ext^1_\Z(H_{i-1}(Y_\bullet, \Z),M)
\to H^i(Y_\bullet, M) \to 
\Hom_\Z (H_i(Y_\bullet, \Z),M) \to 0.
$$
for any abelian group $M$.

Similarly, for a locally finite simplicial complex $Y_\bullet$,
we have short exact sequences
$$
0 \to \Ext^1_\Z(H_c^{i+1}(Y_\bullet, \Z),M)
\to H^\BM_i(Y_\bullet,M)
\to \Hom_\Z (H_c^i(Y_\bullet, \Z),M) \to 0
$$
and
$$
0 \to H^i_c (Y_\bullet, \Z) \otimes M
\to H_c^i(Y_\bullet, M) \to 
\Tor_1^\Z (H_c^{i+1}(Y_\bullet, \Z),M) \to 0
$$
for any abelian group $M$. 
The canonical inclusions
$$
\begin{array}{c}
\left( \bigoplus_{\nu \in Y_i'}M \right)_\pmone
\inj \left( \prod_{\nu \in Y_i'}M \right)_\pmone \text{and}
\\
\Map^{\mathrm{fin}}_{\pmone}(Y_i', M)
\inj \Map_{\pmone}(Y_i', M)
\end{array}
$$
for $i \ge 0$ induce 
homomorphisms $H_i(Y_\bullet, M)
\to H^\BM_i(Y_\bullet, M)$ and
$H_c^i(Y_\bullet, M) \to H^i(Y_\bullet, M)$
of abelian groups, respectively.

\subsubsection{}
\label{sec:quasifinite}
Let $f=(f_i)_{i \ge 0}:
Y_\bullet \to Z_\bullet$ be a map of
simplicial complexes. For each integer $i \ge 0$
and for each abelian group $M$,
the map $f$ canonically 
induces homomorphisms $f_* : H_i(Y_\bullet, M)
\to H_i(Z_\bullet,M)$ and
$f^* : H^i(Z_\bullet,M) \to H^i(Y_\bullet,M)$.
We say that the map $f$ is finite
if the subset $f_i^{-1}(\sigma)$ of $Y_i$ is
finite for any $i \ge 0$ and for any 
$\sigma \in Z_i$. If $Y_\bullet$ and $Z_\bullet$ are
locally finite, and if $f$ is finite, then
$f$ canonically 
induces the 
pushforward homomorphism $f_* : H^\BM_i(Y_\bullet, M)
\to H^\BM_i(Z_\bullet,M)$ and
the pullback homomorphism
$f^*: H_c^i(Z_\bullet,M) \to H_c^i(Y_\bullet,M)$.

\subsubsection{} \label{sec:CWcomplex}
 Let $Y_\bullet$ be a simplicial complex.
We associate a CW complex $|Y_\bullet|$ which we call
the geometric realization of $Y_\bullet$.
Let $I(Y_\bullet)$ denote the disjoint
union $\coprod_{i \ge 0} Y_i$. We define a partial order
on the set $I(Y_\bullet)$ by saying that $\tau \le \sigma$
if and only if $\tau$ is a face of $\sigma$.
For $\sigma \in I(Y_\bullet)$, we let $\Delta_\sigma$ denote
the set of maps $f:V(\sigma) \to \R_{\ge 0}$ satisfying
$\sum_{v \in V(\sigma)} f(v) =1$. We regard $\Delta_\sigma$
as a topological space whose topology is induced from that
of the real vector space $\Map(V(\sigma),\R)$.
If $\tau$ is a face of $\sigma$, we regard the space
$\Delta_\tau$ as the closed subspace of $\Delta_\sigma$
which consists of the maps $V(\sigma) \to \R_{\ge 0}$
whose support is contained in the subset 
$V(\tau) \subset V(\sigma)$.
We let $|Y_\bullet|$ denote the colimit
$\varinjlim_{\sigma \in I(Y_\bullet)} \Delta_\sigma$
in the category of topological spaces
and call it the geometric realization of $Y_\bullet$.
%
It follows from the definition that
the geometric realization $|Y_\bullet|$ has a canonical
structure of CW-complex.

\subsubsection{Cellular versus singular}
\label{sec:cellular}
We give a remark on the use of the term 
``Borel-Moore homology''
in this paragraph.
Given a strict simplicial complex,
its cohomology, homology and cohomology with compact support
(for a locally finite strict simplicial complex)
are usually defined as above,
and called cellular (co)homology.  See for example \cite{Hatcher}.

On the other hand, there is also the singular (co)homology
and with compact support that are 
defined using the singular (co)chain complex.  
It is well-known that the cellular (co)homology groups
(with compact support) are isomorphic to the 
singular (co)homology groups (with compact support)
of the geometric realization.
The same proof applies to the generalized simplicial 
complexes and gives an isomorphism between the
cellular and the singular theories.

For the Borel-Moore homology, we did not find a cellular 
definition as above, except in Hattori \cite{Hattori} 
where he does
not call it the Borel-Moore homology.  He also gives a
definition using singular chains and shows that the two homology
groups are isomorphic.

There are several definitions of Borel-Moore homology
that may be associated to a (strict) simplicial complex.
The definition of the Borel-Moore homology for PL manifolds 
is found in Haefliger \cite{Haefliger}.  
There is also a sheaf theoretic definition in Iversen 
\cite{Iversen}.
More importantly, there is the general definition which concerns
the intersection homology.
However, we did not find a statement in the literature
and we did not check that the cellular
definition in Hattori (same as the one given in this article) is
isomorphic to the other Borel-Moore homology theories.


\section{The Bruhat-Tits building and apartments}
\label{sec:BT}
For the general theory of Bruhat-Tits building and
apartments, the reader is referred to the book \cite{Ab-Br}.

\subsection{The Bruhat-Tits building of $\mathrm{PGL}_d$}
In the following paragraphs, 
we recall the definition of
the Bruhat-Tits building of $\PGL_d$ over $K$.
We recall that it is a strict simplicial complex.

\subsubsection{Notation}
Let $K$ be a nonarchimedean local field.
We let $\cO \subset K$ denote the ring of integers.
We fix a uniformizer $\varpi \in \cO$.
Let $d \ge 1$ be an integer.
Let $V=K^{\oplus d}$.  We regard it as the set of row vectors
so that $\GL_d(K)$ acts from the right by multiplication.

\subsubsection{Bruhat-Tits building (\cite{BT})}
\label{Bruhat-Tits}

An $\cO$-lattice in $V$ is 
a free $\cO$-submodule of $V$ of rank $d$.
We denote by $\Lat_{\cO}(V)$ the
set of $\cO$-lattices in $V$.
We regard the set $\Lat_{\cO}(V)$ as
a partially ordered set whose elements are ordered 
by the inclusions of $\cO$-lattices.
%
Two $\cO$-lattices $L$, $L'$ of $V$
are called homothetic if $L = \varpi^j L'$
for some $j \in \Z$. Let $\Latbar_{\cO}(V)$
denote the set of homothety classes of 
$\cO$-lattices $V$. We denote by
$\cl$ the canonical surjection
$\cl: \Lat_{\cO}(V) \to
\Latbar_{\cO}(V)$.
We say that a subset $S$ of 
$\Latbar_{\cO}(V)$
is totally ordered if $\cl^{-1}(S)$ is 
a totally ordered subset of $\Lat_{\cO}(V)$.
The pair $(\Latbar_{\cO}(V), \Delta)$ 
of the set $\Latbar_{\cO}(V)$ 
and the set $\Delta$ of totally ordered finite subsets of
$\Latbar_{\cO}(V)$ forms
a strict simplicial complex.
The Bruhat-Tits building of $\PGL_d$ over $K$
is a simplicial complex $\cBT_\bullet$ which is 
isomorphic to the simplicial complex associated to
this strict simplicial complex.
In the next paragraphs we explicitly describe
the simplicial complex $\cBT_\bullet$.

\subsubsection{}
For an integer $i \ge 0$, let
$\wt{\cBT}_i$ be the set of sequences $(L_j)_{j \in \Z}$
of $\cO$-lattices in $V$ indexed by $j \in \Z$
such that $L_j \supsetneqq L_{j+1}$ 
and $\varpi L_j=L_{j+i+1}$ hold for all $j\in \Z$.
In particular, if $(L_j)_{j \in \Z}$ is an element in 
$\wt{\cBT}_0$, then $L_j = \varpi^j L_0$ for
all $j \in \Z$. We identify the set
$\wt{\cBT}_0$ with the set $\Lat_{\cO}(V)$
by associating the $\cO$-lattice $L_0$
to an element $(L_j)_{j \in \Z}$ in $\cBT_0$.
We say that two elements $(L_j)_{j \in \Z}$
and $(L'_j)_{j \in \Z}$ in $\wt{\cBT}_i$ are equivalent if
there exists an integer $\ell$ satisfying
$L'_j=L_{j+\ell}$ for all $j \in \Z$.
We denote by $\cBT_i$ the set of the equivalence
classes in $\wt{\cBT}_i$. 
For $i=0$, the identification $\wt{\cBT_0}
\cong \Lat_{\cO}(V)$ gives an identification
$\cBT_0 \cong \Latbar_{\cO}(V)$.

Let $\sigma \in \cBT_i$ and take a representative
$(L_j)_{j \in \Z}$ of $\sigma$.
For $j \in \Z$, let us consider the class
$\cl(L_j)$ in $\Latbar_{\cO}(V)$.
Since $\varpi L_j = L_{j+i+1}$, we have
$\cl(L_j) = \cl(L_{j+i+1})$. Since
$L_j \supsetneqq L_k \supsetneqq \varpi L_j$
for $0 \le j < k \le i$, the elements
$\cl(L_0), \ldots, \cl(L_i) \in 
\Latbar_{\cO}(V)$
are distinct. Hence the subset 
$V(\sigma) = \{ \cl(L_j)\ |\ j \in \Z \} \subset \cBT_0$ 
has cardinality $i+1$ and
does not depend on the choice of $(L_j)_{j \in \Z}$.
It is easy to check that the map from
$\cBT_i$ to the set of finite subsets of 
$\Latbar_{\cO}(V)$ which sends
$\sigma \in \cBT_i$ to $V(\sigma)$ is injective
and that
the set $\{V(\sigma)\ |\ \sigma \in \cBT_i\}$
is equal to the set of totally ordered subsets of
$\Latbar_{\cO}(V)$ with cardinality
$i+1$. In particular, for any $j \in \{0,\ldots,i\}$ 
and for any subset $V' \subset V(\sigma)$
of cardinality $j+1$, there exists a unique element
in $\cBT_j$, which we denote by $\sigma \times_{V(\sigma)} V'$,
such that $V(\sigma \times_{V(\sigma)} V')$ is equal to $V'$.
Thus the collection $\cBT_\bullet = \coprod_{i \ge 0} \cBT_i$
together with the data $V(\sigma)$ and
$\sigma \times_{V(\sigma)} V'$
forms a simplicial complex which is canonically isomorphic
to the simplicial complex associated to the
strict simplicial complex 
$(\Latbar_{\cO}(V), \Delta)$ 
which we introduced in the
first paragraph of Section~\ref{Bruhat-Tits}.
We call the simplicial complex $\cBT_\bullet$
the Bruhat-Tits building of $\PGL_d$ over $K$.

\subsubsection{}
\label{sec:dimension}
The simplicial complex $\cBT_\bullet$ is of dimension
at most $d-1$, by which we mean that $\cBT_i$ is an empty
set for $i > d-1$. It follows from the fact that
$\wt{\cBT}_i$ is an empty set for $i > d-1$, which
we can check as follows. Let $i > d-1$ and assume 
that there exists an element 
$(L_j)_{j \in \Z}$ in $\wt{\cBT}_i$.
Then for $j=0,\ldots,i+1$, the quotient 
$L_j/L_{i+1}$ is a subspace of the $d$-dimensional 
$(\cO/\varpi \cO)$-vector 
space $L_0/L_{i+1}=L_0/\varpi L_0$. These subspaces
must satisfy $L_0/L_{i+1} \supsetneqq L_1/L_{i+1}
\supsetneqq \cdots \supsetneqq L_{i+1}/L_{i+1}$.
It is impossible since $i+1 > d$.

\subsection{Apartments}
\label{sec:apartments}
\ 

\

Here we recall the definition of the apartments
which are simplicial subcomplexes of the Bruhat-Tits 
building. We then associate
to each apartment a class in the Borel-Moore homology 
of a quotient of the Bruhat-Tits building.
This class is an analogue of a modular symbol.

\subsubsection{}
\label{sec:def apartment}
We give an explicit description of the simplicial complex
$A_\bullet$ below without making use of the theory 
 of root systems.  
For the viewpoint in the general theory of root systems, 
we refer the reader to \cite[p. 523, 10.1.7 Example]{Ab-Br}.

set $A_0 = \Z^{\oplus d}/\Z(1,\ldots,1)$.
For two elements $x=(x_j), y=(y_j) \in \Z^{\oplus d}$,
we write $x \le y$ when $x_j \le y_j$ for all $1 \le j \le d$.
We say that a subset $\wt{\sigma} \subset \Z^{\oplus d}$
is small if for any two elements $x, y \in \wt{\sigma}$ 
we have either $x \le y \lneq x+(1,\ldots,1)$
or $y \le x \lneq y+(1,\ldots,1)$.
Explicitly, this means that 
$\wt{\sigma}$ is a finite set and is of the form
$\wt{\sigma} = \{x_0,\ldots,x_i \}$ for some elements
$x_0, \ldots, x_i$ satisfying 
$x_0 \lneq \cdots \lneq x_i \lneq x_{i+1} = x_0
+ (1,\ldots,1)$.
We say that a finite subset $\sigma \subset A_0$ has
a small lift to $\Z^{\oplus d}$ if there exists a small subset 
$\wt{\sigma} \subset \Z^{\oplus d}$ which maps bijectively
onto $\sigma$ under the canonical surjection $\Z^{\oplus d} \surj A_0$.
For $i \ge 0$, we let $A_i$ denote the
set of the subsets $\sigma \subset A_0$ with cardinality 
$i+1$ which has a small lift to $\Z^{\oplus d}$.
It is clear that the pair $(A_0, \coprod_{i\ge 0} A_i)$
forms a strict simplicial complex and the 
collection $A_\bullet =(A_i)_{i \ge 0}$
is the simplicial complex associated to the
strict simplicial complex $(A_0, \coprod_{i\ge 0} A_i)$.
We note that $A_i$ is an empty set for $i \ge d$, 
since by definition there is no small subset of $\Z^{\oplus d}$
with cardinality larger than $d$.

\subsubsection{} \label{sec:521}
Let $v_1, \dots, v_d$ be a basis of $V =K^{\oplus d}$.
We define a map $\iota_{v_1,\ldots,v_d} : 
A_\bullet \to \cBT_\bullet$ of simplicial complexes.

Let $\wt{\iota}_{v_1,\ldots,v_d}: \Z^{\oplus d} \to \wt{\cBT}_0$ 
denote the map which sends the element $(n_1,\ldots,n_d) \in \Z^d$
to the $\cO$-lattice 
$\cO\varpi^{n_1} v_1  
\oplus 
\cO \varpi^{n_2} v_2  
\oplus \dots \oplus
\cO \varpi^{n_d} v_d$.
Let $i \ge 0$ be an integer and let $\sigma \in A_i$.
Take a small subset $\wt{\sigma} \subset \Z^{\oplus d}$
with cardinality $i+1$ which maps bijectively onto $\sigma$
under the surjection $\Z^{\oplus d} \to \Z^{\oplus d}/
\Z (1,\ldots,1) = A_0$. By definition the set $\wt{\sigma}$
is of the form $\wt{\sigma} = \{x_0, \ldots, x_i\}$
where $x_0 ,\ldots, x_i \in \Z^{\oplus d}$ satisfy $x_0 \lneq \cdots \lneq x_i \lneq x_{i+1}$
where we have set $x_{i+1} = x_0 + (1,\ldots,1)$.
For each integer $j\in \Z$ we write $j$ in the form
$j=m(i+1) + r$ with $m \in \Z$ and $r \in \{0,\ldots, i\}$,
and set $x_j = x_r + m(1,\ldots,1)$ and $L_j = 
\wt{\iota}_{v_1,\ldots,v_d}(x_j)$.
The sequence $(L_j)_{j \in \Z}$ of $\cO$-lattices
gives an element 
$\wt{\iota}_{v_1,\ldots,v_d,i}(\wt{\sigma})$
in $\wt{\cBT}_i$.
We denote by $\iota_{v_1,\ldots,v_d,i}(\sigma)$
the class of $\wt{\iota}_{v_1,\ldots,v_d,i}(\wt{\sigma})$
in $\cBT_i$.

\begin{lem} \label{lem:iota}
The class $\iota_{v_1,\ldots,v_d,i}(\sigma)$
does not depend on the choice of a small lift~$\wt{\sigma}$.
\end{lem}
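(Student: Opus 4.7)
The plan is to associate to $\sigma$ a canonical doubly infinite strictly increasing sequence in $\Z^{\oplus d}$, and to show that every small lift of $\sigma$ arises as $i+1$ consecutive terms of this sequence. From this it will follow that different small lifts produce elements of $\wt{\cBT}_i$ differing only by a shift of index, which are therefore equivalent and yield the same class in $\cBT_i$.

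Concretely, I would fix one small lift $\wt{\sigma} = \{x_0,\ldots,x_i\}$ with $x_0 \lneq \cdots \lneq x_i \lneq x_0+(1,\ldots,1)$ and extend it to a doubly infinite sequence $(x_j)_{j \in \Z}$ by the rule $x_{j+i+1}=x_j+(1,\ldots,1)$, as in the definition of $\wt{\iota}_{v_1,\ldots,v_d,i}$. The smallness condition for $\wt{\sigma}$ directly implies that $(x_j)_{j\in\Z}$ is strictly increasing with respect to $\lneq$. Moreover, since every element of $\sigma$ has a unique representative in $\wt{\sigma}$ and any two lifts of the same element of $\sigma$ differ by an element of $\Z(1,\ldots,1)$, the image of this sequence coincides with the full preimage $\Sigma \subset \Z^{\oplus d}$ of $\sigma$ under the surjection $\Z^{\oplus d} \surj A_0$.

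Now let $\wt{\sigma}' = \{y_0,\ldots,y_i\}$ be any second small lift, ordered so that $y_0 \lneq \cdots \lneq y_i \lneq y_0+(1,\ldots,1)$. Each $y_l$ lies in $\Sigma$, so $y_l = x_{k_l}$ for a uniquely determined $k_l \in \Z$. The strict monotonicity of $(x_j)$ forces $k_0<k_1<\cdots<k_i$, while the final inequality $y_i \lneq y_0+(1,\ldots,1) = x_{k_0+i+1}$ gives $k_i < k_0+i+1$. These bounds together force $k_l=k_0+l$ for $0 \le l \le i$. Applying $\wt{\iota}_{v_1,\ldots,v_d}$, the sequence of $\cO$-lattices attached to $\wt{\sigma}'$ is thus the shift by $k_0$ of the sequence attached to $\wt{\sigma}$, so the two lie in the same equivalence class in $\wt{\cBT}_i$ and hence define the same element of $\cBT_i$.

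The only nontrivial step is the combinatorial one: showing that any two small lifts correspond to $i+1$ consecutive indices in the canonical sequence $(x_j)$. Once this is in place, the conclusion is routine bookkeeping and is independent of the chosen basis $v_1,\ldots,v_d$.
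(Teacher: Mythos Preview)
Your proof is correct and follows essentially the same approach as the paper's: both identify the full preimage of $\sigma$ with the doubly infinite sequence $(x_j)_{j\in\Z}$ and argue that any small lift must consist of $i+1$ consecutive terms, hence yields a lattice chain differing only by an index shift. Your version is slightly more explicit in justifying the ``consecutive terms'' step via the inequalities $k_0<\cdots<k_i<k_0+i+1$, whereas the paper states this conclusion more tersely.
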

\begin{proof}
The inverse image of $\sigma$ under the canonical surjection
$\Z^{\oplus d} \to \Z^{\oplus d}/\Z (1,\ldots,1)$ is equal to
$\{ x_j \ |\ j \in \Z\}$. Since $x_j \lneq x_{j'}$ for
$j \lneq j'$ and $x_{j+i+1} = x_j + (1,\ldots,1)$, any
small subset $\wt{\sigma}'$ of $\Z^{\oplus d}$ with cardinality $i+1$
which maps bijectively onto $\sigma$ is of the form
$\wt{\sigma}' = \{x_{l}, x_{l +1}, \ldots, x_{l +i} \}$ 
for some $l \in \Z$.
The element $\wt{\iota}_{v_1,\ldots,v_d,i}(\wt{\sigma}')$
is the sequence $(L'_j)_{j\in \Z}$, where 
$L'_j = L_{j+l}$. Hence the two elements
$\wt{\iota}_{v_1,\ldots,v_d,i}(\wt{\sigma})$
and $\wt{\iota}_{v_1,\ldots,v_d,i}(\wt{\sigma}')$
give the same element in $\cBT_i$.
 \end{proof}
It is easy to check that the map
$\iota_{v_1,\ldots,v_d,i} : A_i \to \cBT_i$
is injective for every $i \ge 0$ and that
the collection of the map
$\iota_{v_1,\ldots,v_d,i}$ forms 
a map $\iota_{v_1,\ldots,v_d}:A_\bullet
\to \cBT_\bullet$ of simplicial complexes.
We define a simplicial subcomplex 
$A_{v_1, \dots, v_d , \bullet}$ 
of $\cBT_\bullet$ to be the image of the
map $\iota_{v_1,\ldots,v_d}$ so that
$A_{v_1, \dots, v_d , i}$ is the image of
the map $\iota_{v_1,\ldots,v_d,i}$ for each $i \ge 0$.
We call the subcomplex 
$A_{v_1, \dots, v_d , \bullet}$ of $\cBT_\bullet$
the apartment in $\cBT_\bullet$ corresponding to the basis
$v_1,\ldots,v_d$. Since the map
$\iota_{v_1,\ldots,v_d,i}$ is injective for every $i \ge 0$,
the map $\iota_{v_1,\ldots,v_d}$ induces an isomorphism
$A_\bullet \xto{\cong} A_{v_1, \dots, v_d , \bullet}$
of simplicial complexes.

\subsubsection{} \label{sec:fundamental class}
We introduce a special element $\beta$ in
the group $H^\BM_{d-1}(A_\bullet,\Z)$, which is
an analogue of the fundamental class.
Let $\sigma \in A_{d-1}$ and take a small lift
$\wt{\sigma} \subset \Z^{\oplus d}$ to $\Z^{\oplus d}$.
By definition the set
$\wt{\sigma}$ is of the form 
$\wt{\sigma} = \{ x_1, \ldots, x_d\}$
with $x_0 \lneq x_1 \lneq \cdots \lneq x_d$ where
we have set $x_0 = x_d -(1,\ldots,1)$.
It follows from this property that for each
integer $i$ with $1 \le i \le d$ there exists 
a unique integer $w(i)$ with $1 \le w(i) \le d$ 
such that the $w(i)$-th coordinate of $x_i - x_{i-1}$
is equal to $1$ and the other coordinates of
$x_i - x_{i-1}$ are equal to zero.
Since we have $\sum_{i=1}^d (x_i - x_{i-1})
= x_d -x_0 = (1,\ldots,1)$, the map $w:\{1,\ldots,d\}
\to \{1,\ldots,d\}$ is injective.
Hence it defines an element $w$ in the symmetric
group $S_d$. The maps $\{1,\ldots,d\} \to 
A_0 = \Z^{\oplus d} / \Z(1,\ldots,1)$
which sends $i$ to the class of $x_{w^{-1}(i)}$ in $A_0$
gives an element $[\wt{\sigma}]$ in $T(\sigma)$.
\begin{lem}
The element $[\wt{\sigma}] \in T(\sigma)$ does not
depend on the choice of a lift $\wt{\sigma}$.
\end{lem}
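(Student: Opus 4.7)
The plan is to show directly that any two small lifts of $\sigma$ give the same bijection in $T(\sigma)$. By the argument already used in the proof of Lemma~\ref{lem:iota}, the preimage of $\sigma$ under $\Z^{\oplus d} \surj A_0$ is the bi-infinite sequence $\{x_j\ |\ j \in \Z\}$ obtained by extending the initial lift periodically via $x_{j+d} = x_j + (1,\ldots,1)$, and every small lift has the form $\wt{\sigma}_l = \{x_{l+1},\ldots,x_{l+d}\}$ for a unique $l \in \Z$. Thus it suffices to show $[\wt{\sigma}_l] = [\wt{\sigma}_0]$ for every $l$.

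First I would extend the permutation $w$ to all integers: for each $j \in \Z$ let $w(j) \in \{1,\ldots,d\}$ be the unique index at which the coordinate of $x_j - x_{j-1}$ equals $1$. The periodicity $x_{j+d} = x_j + (1,\ldots,1)$ forces $w(j+d) = w(j)$, so $w$ descends to a function on $\Z/d\Z$, and by the same telescoping argument already given in Section~\ref{sec:fundamental class} its restriction to $\{1,\ldots,d\}$ is a bijection.

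Next I would compute the permutation $w_l$ attached to $\wt{\sigma}_l$. Setting $y_i = x_{l+i}$, the definition gives $w_l(i) = w(l+i)$ for $i = 1,\ldots,d$, i.e.\ $w_l = w \circ \tau_l$ where $\tau_l$ denotes translation by $l$ on $\Z/d\Z$; in particular $w_l^{-1}(i) \equiv w^{-1}(i) - l \pmod{d}$. The bijection attached to $\wt{\sigma}_l$ sends $i$ to $[y_{w_l^{-1}(i)}] = [x_{l + w_l^{-1}(i)}]$ in $A_0$, and the congruence together with $x_{j+d} - x_j = (1,\ldots,1)$ yields the equality $[x_{l + w_l^{-1}(i)}] = [x_{w^{-1}(i)}]$. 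Hence the bijections $\{1,\ldots,d\} \to V(\sigma)$ attached to $\wt{\sigma}_l$ and to $\wt{\sigma}_0$ coincide as elements of $T(\sigma)$.

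The argument is essentially bookkeeping with cyclic shifts, and I do not anticipate any serious obstacle; the only point that needs care is to verify that the shift of the indexing by $l$ is cancelled exactly by the corresponding shift of the inverse permutation once one passes from $\Z^{\oplus d}$ to its quotient $A_0 = \Z^{\oplus d}/\Z(1,\ldots,1)$.
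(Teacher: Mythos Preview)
Your proof is correct and follows essentially the same approach as the paper: both arguments enumerate all small lifts as cyclic shifts $\{x_{l+1},\ldots,x_{l+d}\}$ of a fixed periodic sequence, observe that the permutation attached to the shifted lift satisfies $w_l^{-1}(i) \equiv w^{-1}(i) - l \pmod d$, and conclude that the resulting vertex $[x_{l+w_l^{-1}(i)}] = [x_{w^{-1}(i)}]$ in $A_0$ is independent of $l$. Your write-up is slightly more explicit about the cyclic-shift bookkeeping, but the substance is identical.
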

\begin{proof}
For each integer $j\in \Z$ we write $j$ of the form
$j=md + r$ with $m \in \Z$ and $r \in \{0,\ldots, d-1\}$
and set $x_j = x_r + m(1,\ldots,1)$.
As we have mentioned in the proof of Lemma~\ref{lem:iota},
The inverse image of $\sigma$ under the canonical surjection
$\Z^{\oplus d} \to \Z^{\oplus d}/\Z (1,\ldots,1)$ is equal to
$\{ x_j \ |\ j \in \Z\}$ and any
small lift $\wt{\sigma}'$ of $\sigma$ to $\Z^{\oplus d}$
is of the form
$\wt{\sigma}' = \{x_{l}, x_{l +1}, \ldots, x_{l +d-1} \}$ 
for some $l \in \Z$. 
For each $i \in \{1,\ldots,d\}$,
the unique integer $j \in \{l,l+1,\ldots, l+d-1 \}$ 
such that the $i$-th coordinate of $x_j - x_{j-1}$
is equal to $1$ and the other coordinates of
$x_j - x_{j-1}$ are equal to zero is congruent
to $w^{-1}(i)$ modulo $d$.
Hence the class of $x_j$ in $A_0$ does not depend
on the choice of a small lift $\wt{\sigma}'$.
This proves the claim.
 \end{proof}

We denote by $[\sigma]$ the class of $[\wt{\sigma}]$
in $O(\sigma)$.
We let $\wt{\beta}$ denote the element
$\wt{\beta} = (\beta_{\nu})_{\nu \in A_{d-1}'}$
in $\prod_{\nu \in A_{d-1}'} \Z$ where $\beta_\nu =1$ if
$\nu = [\sigma]$ for some $\sigma \in A_{d-1}$ and
$\beta_\nu=0$ otherwise.
%
We denote by $\beta$ the class of $\wt{\beta}$
in $(\prod_{\nu \in A_{d-1}'} \Z)_\pmone$.

\begin{prop}
The element $\beta \in (\prod_{\nu \in A_{d-1}'} \Z)_\pmone$ 
is a $(d-1)$-cycle in the chain complex which computes
the Borel-Moore homology of $A_\bullet$.
\end{prop}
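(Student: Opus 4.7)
The plan is to lift $\beta$ to $\wt{\beta}\in\prod_{\nu\in A'_{d-1}}\Z$ and show that $\wt{\partial}(\wt{\beta})$ lies in the symmetric subgroup $\{(m_\nu):m_\nu=m_{-\nu}\text{ for all }\nu\}$, which is exactly the image of $1-(-1)$ under the $\pmone$-action and hence dies in the coinvariants $(\prod_{\nu\in A'_{d-2}}\Z)_\pmone$. Since $A_i=\emptyset$ for $i\ge d$ (Section~\ref{sec:def apartment}), there are no chains in higher degree, so establishing this symmetry is enough to conclude that $\beta$ is a cycle.

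The main step is a local computation at each $\sigma'\in A_{d-2}$. First I would establish the combinatorial fact that $\sigma'$ is a face of exactly two $(d-1)$-simplices: fixing a small lift $\{x_0,\dots,x_{d-2}\}\subset\Z^{\oplus d}$, the successive differences $\Delta_j=x_j-x_{j-1}$ (together with the wrap-around $\Delta_{d-1}=x_0+(1,\ldots,1)-x_{d-2}$) are $d-1$ nonzero $0/1$-vectors summing to $(1,\ldots,1)$. A count of the total number of $1$'s forces exactly one of them, say $\Delta_k=e_a+e_b$ with $a<b$, to have support of size two while the remaining $d-2$ are standard basis vectors. Splitting $\Delta_k$ in its two possible orderings then produces the two extensions $\sigma_a,\sigma_b\in A_{d-1}$, with newly added vertices $v_a=x_{k-1}+e_a$ and $v_b=x_{k-1}+e_b$ (with an analogous description when $k=d-1$).

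The heart of the proof is the sign identity $s_{v_a}([\sigma_a])=-s_{v_b}([\sigma_b])$ in $O(\sigma')$. Granted this, each of the pairs $(v_a,\sigma_a)$ and $(v_b,\sigma_b)$ contributes $+1$ to $\wt{\partial}(\wt{\beta})$ at exactly one of the two orientations of $\sigma'$, and the two contributions lie at opposite orientations, yielding the required symmetry $\wt{\partial}(\wt{\beta})_{\nu'}=\wt{\partial}(\wt{\beta})_{-\nu'}=1$ for each $\sigma'\in A_{d-2}$. To verify the sign identity, I would note that the permutations $w^a,w^b\in S_d$ entering the definition of the canonical orientations satisfy $w^b=w^a\circ(k,k+1)$; unwinding the recipe for $[\sigma_?]$ shows that the two bijections $\{1,\ldots,d\}\to V(\sigma_?)$ agree outside positions $a$ and $b$, where $[\sigma_a]$ holds $(v_a,x_k)$ at $(a,b)$ and $[\sigma_b]$ holds $(x_k,v_b)$. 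Deleting the appropriate position in each, as prescribed by the definition of $s_v$, produces two bijections $\{1,\ldots,d-1\}\to V(\sigma')$ that differ by the cyclic permutation $(a,b-1,b-2,\ldots,a+1)$ of length $b-a$, whose sign $(-1)^{b-a-1}$ combines with the external factors $(-1)^a$ and $(-1)^b$ to give the desired cancellation.

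The main obstacle is this sign computation: one has to track the labelings carefully to isolate the cyclic permutation with the correct parity. The wrap-around case $k=d-1$ requires a separate but analogous parametrization of $\sigma_a,\sigma_b$, although it produces the identical sign identity and hence the same conclusion.
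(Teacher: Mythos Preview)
Your proposal is correct and follows essentially the same route as the paper: both arguments fix a $(d-2)$-simplex $\sigma'$, show it has exactly two $(d-1)$-simplex parents whose associated permutations differ by a single transposition, and deduce that the two contributions to the boundary land at opposite orientations and hence cancel in the coinvariants. Your write-up is in fact more explicit than the paper's on the sign bookkeeping (the paper simply records $w_1=w_2(i,i+1)$ and asserts the vanishing), and your ``wrap-around'' case can be absorbed into the generic one by choosing the small lift so that the double-support difference occurs at an interior index, as the paper implicitly does.
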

\begin{proof}
The assertion is clear for $d=1$ since
the $(d-2)$-nd component of the complex is zero.
Suppose that $d \ge 2$.
Let $\tau$ be an element in $A_{d-2}$.
Take a small lift $\wt{\tau} \subset \Z^{\oplus d}$ 
of $\tau$ to $\Z^{\oplus d}$.
By definition the set
$\wt{\tau}$ is of the form 
$\wt{\tau} = \{ x_1, \ldots, x_d\}$
with $x_0 \lneq x_1 \lneq \cdots \lneq x_{d-1}$ where
we have set $x_0 = x_d -(1,\ldots,1)$.
There is a unique $i \in \{1,\ldots,d-1\}$
such that $x_{i} - x_{i -1}$ has two non-zero
coordinates. There are exactly two elements
in $\Z^{\oplus d}$ which is larger than $x_{i-1}$
and which is smaller than $x_i$. We denote these
two elements by $y_1$ and $y_2$.
We set $\wt{\sigma}_j = \wt{\tau} \cup \{y_j\}$
for $j=1,2$. The sets $\wt{\sigma}_1$,
$\wt{\sigma}_2$ are small subsets of $\Z^{\oplus d}$
of cardinality $d$ and their images
$\sigma_1$, $\sigma_2$ under the surjection
$\Z^{\oplus d} \surj \Z^{\oplus d}/\Z(1,\ldots,1)$
are elements in $A_{d-1}$.
For $j=1,2$, let $w_j$ denote the element
$w$ in the symmetric group $S_d$ which appeared
in the first paragraph of 
Section~\ref{sec:fundamental class} for 
$\sigma = \sigma_j$.
It follows from the definition of $\sigma_j$ that
we have $w_1 = w_2 (i,i+1)$, where $(i,i+1)$ denotes the
transposition of $i$ and $i+1$.
It is easily checked that 
the set of the elements in $A_{d-1}$
which has $\tau$ as a face is equal to
$\{\sigma_1,\sigma_2\}$. 
Since we have $\sgn(w_1) = - \sgn(w_2)$,
it follows that the component 
in $(\prod_{\nu \in O(\tau)} \Z)_\pmone$ of the
image of $\beta$ under the boundary map 
$(\prod_{\nu \in A_{d-1}'} \Z)_\pmone 
\to (\prod_{\nu' \in A_{d-2}'} \Z)_\pmone$
is equal
to zero. This proves the claim.
 \end{proof}

\section{Arithmetic groups and modular symbols}
\subsection{Arithmetic groups}\label{sec:71}

\subsubsection{An arithmetic group}
\label{sec:4.1.1}
We give here the definition of 
our main object of study, an arithmetic 
group $\Gamma$.

Let us give the setup.
We let $F$ denote a global field of positive characteristic.
Let $C$ be a proper smooth curve over a finite field
whose function field is $F$.
Let $\infty$ be a place of $F$ and let $K=F_\infty$ denote
the local field at $\infty$.
We let $A=H^0(C \setminus \{\infty\}, \cO_C)$.
Here we identified a closed point of $C$ and
a place of $F$.
We write $\wh{A}=\varprojlim_I A/I$, where
the limit is taken over the nonzero ideals of $A$.
We let  $\A^\infty=\wh{A} \otimes_A F$ denote the ring of 
finite adeles.

Let $\bK^\infty \subset \GL_d(\A^\infty)$ be a compact open subgroup.
We set $\Gamma=\GL_d(F) \cap \bK^\infty$
and regard it as a subgroup of $\GL_d(K)$.
We refer to the group of this form for some $\bK^\infty$
an arithmetic (sub)group of $\GL_d(K)$ (contained in $\GL_d(F)$).

We give a remark.  Let $\Gamma$ be an arithmetic group.
Then $\Gamma \cap \mathrm{SL}_d(F)=\Gamma \cap \mathrm{SL}_d(K)$
is a subgroup of $\Gamma$ of finite index, and is an $S$-arithmetic group
of $\mathrm{SL}_d$ over $F$ for $S=\{\infty\}$ 
dealt in the paper of Harder \cite{Harder}.
\subsubsection{}
\label{sec:def Gamma}
Let $\Gamma \subset \GL_d(K)$ be a subgroup.
We consider the following Conditions (1) to (5) 
on $\Gamma$.
\begin{enumerate}
\item $\Gamma \subset \GL_d(K)$ is a discrete subgroup,
\item $\{\det(\gamma) \,|\, \gamma \in \Gamma \} \subset O_\infty^\times$
where $O_\infty$ is the ring of integers of $K$,
\item $\Gamma \cap Z(\GL_d(K))$ is finite.
\end{enumerate}
Let $A_\bullet=A_{v_1,\dots,v_d,\bullet}$ 
denote the apartment corresponding to a basis 
$v_1,\dots, v_d\in K^{\oplus d}$
(defined in Section~\ref{sec:521}).
\begin{enumerate}
\setcounter{enumi}{3} 
\item For any apartment 
$A_\bullet=A_{v_1,\dots, v_d,\bullet}$
with $v_1, \dots, v_d \in F^{\oplus d}$, 
the composition 
$A_\bullet \hookrightarrow \cB\cT_\bullet \to \Gamma \backslash \cB\cT_\bullet$ 
is quasi-finite,
that is, the inverse image of any simplex by this map is a finite set.
\item The cohomology group $H^{d-1}(\Gamma, \Q)$
is a finite dimensional $\Q$-vector space.
\end{enumerate}
The condition (1) will be used in Lemma \ref{AF}.
The condition (2) implies that each element in the isotropy group of a simplex fixes the vertices of the simplex.
Under the condition (1), 
the condition (3) implies that the stabilizer of a simplex is finite.
This implies that the $\Q$-coefficient group homology of $\Gamma$ 
and the homology of $\Gamma \backslash |\cB\cT_\bullet|$ are isomorphic.
The condition (4) will be used to define a class in Borel-Moore homology 
of $\Gamma\backslash\cB\cT_\bullet$ starting from an apartment 
(Section \ref{sec:def modular symbol}).  
The condition (5) will be used in the proof of Lemma~\ref{lem:5.6}.

Let us show that all five 
conditions of Section~\ref{sec:def Gamma}
are satisfied when $\Gamma$ is an arithmetic subgroup.
The condition (1) holds trivially.
We note that there exists an element $g \in \GL_d(\A^\infty)$
such that $g \bK^\infty g^{-1} \subset \GL_d(\wh{A})$.
Since $\det(\gamma) \in F^\times \cap \wh{A}^\times \subset O_\infty^\times$ for $\gamma \in \Gamma$, (2) holds.
Because $F^\times \cap \GL_d(\wh{A})$ is finite, (3) holds.
\begin{lem}
\label{lem:arithmetic}
Let $\Gamma$ be an arithmetic subgroup. 
Then (4) holds.
\end{lem}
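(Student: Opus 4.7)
I would reduce to the special case $\Gamma = \GL_d(A)$ and then use the moduli interpretation of $\GL_d(A) \backslash \cBT_0$ in terms of vector bundles on $C$. Since $\bK^\infty$ and $\GL_d(\wh{A})$ are commensurable open compact subgroups of $\GL_d(\A^\infty)$, the intersection $\Gamma_0 := \Gamma \cap \GL_d(A)$ has finite index in both $\Gamma$ and $\GL_d(A)$. It therefore suffices to prove quasi-finiteness of the composition $A_\bullet \hookrightarrow \cBT_\bullet \to \GL_d(A) \backslash \cBT_\bullet$; the lemma for $\Gamma$ then follows because each fiber over $\Gamma \backslash \cBT_\bullet$ is covered by at most $[\Gamma : \Gamma_0]$ fibers over $\Gamma_0 \backslash \cBT_\bullet$, each a subset of a fiber over $\GL_d(A) \backslash \cBT_\bullet$.

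The key computation is the analysis of $\GL_d(A)$-orbits on the vertex set of $A_\bullet$. Fix the $A$-lattice $M_0 := Av_1 \oplus \cdots \oplus Av_d \subset F^d$, which makes sense because $v_1, \ldots, v_d \in F^{\oplus d}$. For each $\cO$-lattice $L \subset K^d$, one constructs a rank-$d$ vector bundle $\mathcal{E}_L$ on $C$ by taking $M_0$ as the space of sections on $C \setminus \{\infty\}$ and $L$ as the formal completion at $\infty$; two lattices lie in the same $\GL_d(A)$-orbit modulo homothety if and only if the associated vector bundles are isomorphic up to twisting by some $\cO_C(n[\infty])$. For a vertex $[L_{\mathbf n}] \in A_0$ with $L_{\mathbf n} = \bigoplus_i \cO \varpi^{n_i} v_i$, the construction decomposes along $v_1, \ldots, v_d$ to give $\mathcal{E}_{L_{\mathbf n}} \cong \bigoplus_{i=1}^d \cO_C(-n_i [\infty])$, a direct sum of $d$ line bundles supported entirely at $\infty$. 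By the Krull--Schmidt theorem, two such direct sums are isomorphic (after the admissible twist) if and only if the multisets $\{n_i\}$ and $\{n'_i\}$ agree modulo a common shift, equivalently $(n'_i) \equiv (n_{\pi(i)}) \pmod{\Z(1,\ldots,1)}$ for some $\pi \in S_d$. Hence the $\GL_d(A)$-orbit of $[L_{\mathbf n}]$ meets $A_0$ in at most $d!$ points.

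For higher-dimensional simplices, condition~(2)---which has just been verified for arithmetic $\Gamma$---implies that every $\gamma \in \Gamma$ preserves the type of each vertex, so a $\Gamma$-orbit of $i$-simplices in the strict simplicial complex $\cBT_\bullet$ is determined by the orbit of its $(i+1)$-element vertex set. The fiber of $A_i \to \Gamma \backslash \cBT_i$ over the image of a simplex $\sigma$ therefore injects into the collection of $(i+1)$-element subsets of $A_0$ that map, vertex-by-vertex, into the image of $V(\sigma)$ in $\Gamma \backslash \cBT_0$; this collection is finite by the vertex case. The main technical obstacle is the moduli interpretation itself: identifying $\mathcal{E}_{L_{\mathbf n}}$ with the stated direct sum requires an explicit transition-function computation at $\infty$, and the dictionary between $\GL_d(A)$-orbits in $\cBT_0$ and isomorphism classes of vector bundles on $C$ modulo twisting by $\cO_C([\infty])$ is the function-field analogue of a standard adelic description used more systematically in the later sections of the paper.
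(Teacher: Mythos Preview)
Your approach is sound and genuinely different from the paper's. The paper argues group-theoretically: it identifies the $i$-simplices of $\cBT_\bullet$ with a coset space $\GL_d(K)/\wt{\bK}_\infty$, describes the apartment as the image of $\coprod_{w\in S_d} g w T(K)$ for the diagonal torus $T$ and some $g\in\GL_d(F)$, and then computes each fiber of the map to $\Gamma\backslash\cBT_\bullet$ as a set of the form $\GL_d(F)\cap T(K)\bK$ for a compact open $\bK\subset\GL_d(\A)$; finiteness of this intersection is established by an explicit row-by-row degree estimate together with the product formula. Your argument instead passes through the moduli interpretation of $\cBT_0$ via rank-$d$ bundles on $C$ and invokes Krull--Schmidt for the apartment lattices. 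The paper's route is more elementary and handles all simplex dimensions uniformly; yours is more geometric and gives the pleasant explicit bound $d!$ on the vertex fibers.

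One point needs tightening. With your choice $M_0 = Av_1\oplus\cdots\oplus Av_d$, the group whose orbits on $\cBT_0$ correspond to isomorphism classes of the glued bundles is $\mathrm{Aut}_A(M_0)$, a $\GL_d(F)$-conjugate of $\GL_d(A)$, not $\GL_d(A)$ itself (unless the $v_i$ happen to form an $A$-basis of $A^{\oplus d}$). So either reduce to $\Gamma'=\mathrm{Aut}_A(M_0)$---still arithmetic and commensurable with any given $\Gamma$, so your first paragraph goes through unchanged---or keep $\GL_d(A)$ and take $M_0=A^{\oplus d}$, in which case the transition function at $\infty$ involves the basis-change matrix and $\mathcal{E}_{L_{\mathbf n}}$ is no longer visibly a direct sum of the line bundles $\cO_C(-n_i[\infty])$. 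The first fix is costless and preserves the rest of your argument verbatim.
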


\begin{proof}
We show that the inverse image of each simplex 
of $\Gamma\backslash \cB\cT_\bullet$ 
under the map in (4) is finite.
The set of simplices of $\cB\cT_\bullet$ is 
of a fixed dimension is identified 
(see Section~\ref{sec:6.1.2} for the 
identification) with 
the coset $\GL_d(K)/\wt{\bK}_\infty$
for an open subgroup
$\wt{\bK}_\infty \subset
\GL_d(K)$
which contains
$K^\times \bK_\infty$
as a subgroup of finite index
for some compact open subgroup
$\bK_\infty \subset \GL_d(K)$.
Let $T \subset \GL_d$ denote the diagonal maximal torus.

The set of simplices of 
$A_\bullet$ of fixed dimension is identified
with the image of the map
\[
\coprod_{w \in S_d}
gwT(K)
\to
\GL_d(K)/\wt{\bK}_\infty
\]
for some $g \in \GL_d(F)$.
Since $S_d$ is a finite group, it then suffices to show that for any
$w \in S_d$, the map
$$
\mathrm{Image}[gwT(K) \to \GL_d(K)/K^\times \bK_\infty ]
\to \Gamma \backslash \GL_d(K) /K^\times \bK_\infty
$$
is quasi-finite.
The inverse image under the last map of the image of 
$gwt \in gwT(K)$ 
is isomorphic to
the set
\[
\begin{array}{ll}
\{\gamma \in\Gamma\,|\, 
\gamma gwt \in gwT(K) 
K^\times \bK_\infty\}
&=\Gamma \cap 
gwT(K) K^\times
\bK_\infty (gwt)^{-1}\\
&=\Gamma \cap 
(gw)T(K) t \bK_\infty
t^{-1}
(gw)^{-1}.
\end{array}
\]
Hence, if we let $g'=gw$
and $\bK'_\infty
=t\bK_\infty t^{-1}$,
this set equals
\[
\begin{array}{ll}
\Gamma\cap 
g'T(K) \bK'_\infty g'^{-1}
&
=\GL_d(F) \cap 
(\bK^\infty \times 
g'T(K) \bK'_\infty g'^{-1})
\\
&=
g'(\GL_d(F) \cap (g'^{-1}
\bK^\infty
g' \cap T(K)
\bK'_\infty)
g'^{-1}.
\end{array}
\]
The finiteness is proved in the following lemma.
\end{proof}

\begin{lem}
For any compact open subgroup $\bK \subset \GL_d(\A)$, 
the set $\GL_d(F) \cap T(K) \bK$ is finite.
\end{lem}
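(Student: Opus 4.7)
The plan is to bound the diagonal coordinates of the $T(K)$-factor by means of the product formula, and then reduce to discreteness of $\GL_d(F)$ in $\GL_d(\A)$. Write any $t \in T(K)$ as $t = u \cdot \diag(\varpi^{n_1}, \ldots, \varpi^{n_d})$ with $u \in T(\cO)$ and $(n_1,\ldots,n_d) \in \Z^d$. Given $\gamma \in \GL_d(F) \cap T(K)\bK$ with $\gamma = tk$, the finite-place components satisfy $\gamma_v = k_v \in \bK_v$, while at $\infty$ we have $\gamma_\infty = \diag(\varpi^{n_1},\ldots,\varpi^{n_d}) \cdot u k_\infty$. Since $\bK$ is compact open, the projection $\bK_\infty$ of $\bK$ to $\GL_d(K)$ is compact, so the entries of $u k_\infty$ have $\infty$-adic valuation bounded below by some $-M$, giving $v_\infty(\gamma_{ij}) \ge n_i - M$ for all $i,j$. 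On the finite side, $\bK_v = \GL_d(\cO_v)$ for almost all $v$ and the finitely many exceptional factors yield a uniform lower bound $\sum_{v\ne\infty} v_v(\gamma_{ij}) \ge -N$.

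Now fix $i$. Invertibility of $\gamma$ ensures that some entry $\gamma_{ij_0}$ in the $i$-th row is a nonzero element of $F$, and the product formula gives $v_\infty(\gamma_{ij_0}) = -\sum_{v \ne \infty} v_v(\gamma_{ij_0}) \le N$; combined with $v_\infty(\gamma_{ij_0}) \ge n_i - M$, this forces $n_i \le N + M$. The matching lower bound on $n_j$ comes from running the analogous argument for $\gamma^{-1} \in \GL_d(F)$: one has $(\gamma^{-1})_v = k_v^{-1} \in \bK_v$ at finite places (since $\bK_v$ is a group) and $(\gamma^{-1})_\infty = k_\infty^{-1} u^{-1} \diag(\varpi^{-n_1}, \ldots, \varpi^{-n_d})$, so invertibility applied to a nonzero entry in each column of $\gamma^{-1}$ yields $n_j \ge -(N'+M')$.

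With $(n_1,\ldots,n_d)$ confined to a fixed finite subset of $\Z^d$, the factor $t$ ranges over finitely many translates of the compact group $T(\cO)$, and $T(K)\bK \cap \GL_d(F)$ is contained in a compact subset of $\GL_d(\A)$. The lemma then follows from discreteness of $\GL_d(F)$ in $\GL_d(\A)$: the intersection $\GL_d(F) \cap \prod_v \GL_d(\cO_v)$ coincides with the finite group $\GL_d(\mathbb{F}_q)$, where $\mathbb{F}_q$ is the field of constants of $F$, and shrinking the neighbourhood at one place separates its non-identity elements from the identity. The main point requiring care is the appeal to invertibility of $\gamma$ (and of $\gamma^{-1}$) to guarantee a nonzero entry in the relevant row or column, since otherwise the product formula has no handle; this is what permits the argument to go through despite possible vanishing of individual entries of $u k_\infty$.
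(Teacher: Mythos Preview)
Your argument is correct and follows essentially the same approach as the paper: bound the exponents $n_i$ of the torus part, then conclude by discreteness of $\GL_d(F)$ against a compact set. The upper bound on $n_i$ via the product formula applied to a nonzero entry in the $i$-th row is exactly the paper's argument (phrased there as ``nonzero elements of $I^{-1}A$ have bounded $\infty$-valuation''). The only genuine difference is the lower bound: the paper observes that $|\det\gamma|_\A=1$ forces $\sum_i n_i=0$, which together with the upper bounds immediately gives a lower bound; you instead run the row argument a second time on $\gamma^{-1}$, using columns. Both are standard and equally short; the determinant route gives the slightly sharper constraint $\sum n_i=0$, while your symmetric approach avoids any appeal to the determinant. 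One small point of care: your product-formula identity $v_\infty(x)=-\sum_{v\neq\infty}v_v(x)$ holds only with suitably normalized valuations (weighted by local degrees), but this affects only the constants and not the argument.
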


\begin{proof}
Let $U=T(K) \cap \bK$.
Then $T(O_\infty) \supset U$ and is of finite index.
Note that there exist a non-zero ideal 
$I\subset A$ and an integer $N$ such that 
$\bK \subset I^{-1}\varpi_\infty^{-N} 
\mathrm{Mat}_d(\wh{A})\times \mathrm{Mat}_d(O_\infty)$
where $\varpi_\infty$ is a uniformizer in $O_\infty$.

Let $\alpha:T(K)/U \to T(K)/T(O_\infty) \cong \Z^{\oplus d}$ 
be the (quasi-finite) map induced by the 
inclusion $U \subset T(O_\infty)$.
For $h \in T(K)$, we write
$(h_1,\dots, h_d)=\alpha(h)$.
Then for $i=1,\ldots,d$, the 
$i$-th row of $h \bK$ is contained in
$(I^{-1}\wh{A} \times \varpi_\infty^{-N}\varpi_\infty^{h_i}O_\infty)^{\oplus d}$.
Hence, for sufficiently large $h_i$, 
the intersection  $h \bK \cap \GL_d(F)$ is empty.  
We then have, for sufficiently large $N'$,

\begin{equation}\label{lem q-finite}
\GL_d(F)\cap T(K) \bK 
=\coprod_{h\in T(K)/U, \atop 
h_1, \ldots, h_d \le N'} 
\GL_d(F) \cap h\bK.
\end{equation}

The adelic norm of the determinant of an element in $\GL_d(F)$ is 1, 
while that of an element in $h \bK$ is 
$|\det h\,|_\infty = \sum_{i=1}^d h_i$.
So (\ref{lem q-finite}) equals
\[\displaystyle 
\coprod_{h\in T(K)/U, h_i \le N', \sum h_i=0} 
\GL_d(F) \cap h \bK.
\]
The index set of the disjoint union above is finite 
since $\alpha$ is quasi-finite,
and $\GL_d(F) \cap h \bK$ is finite
since $\GL_d(F)$ is discrete and 
$h \bK$ is compact.
The claim follows.
 \end{proof}

\begin{lem}
\label{lem:Harder}
Let $\Gamma$ be an arithmetic subgroup.  
Then (5) holds.
\end{lem}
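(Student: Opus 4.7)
The plan is to reduce, using the transfer map in cohomology with $\Q$-coefficients, to the finite-index subgroup $\Gamma_0 := \Gamma \cap \mathrm{SL}_d(F)$, and then to invoke the main results of Harder's paper \cite{Harder}. The remark preceding the lemma already observes that $\Gamma_0 = \Gamma \cap \mathrm{SL}_d(K)$ is a finite-index subgroup of $\Gamma$ and that $\Gamma_0$ is an $S$-arithmetic subgroup of $\mathrm{SL}_d$ over $F$ with $S = \{\infty\}$, which puts $\Gamma_0$ squarely in the framework of \emph{loc.\ cit.}

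For the reduction, I would use the standard fact that, for $\Gamma_0 \subset \Gamma$ of finite index, the composition $\mathrm{cor} \circ \mathrm{res}$ on $H^*(\Gamma, \Q)$ is multiplication by $[\Gamma : \Gamma_0]$; since this index is invertible in $\Q$, the restriction map $H^{d-1}(\Gamma, \Q) \to H^{d-1}(\Gamma_0, \Q)$ is injective and in fact realises $H^{d-1}(\Gamma, \Q)$ as a direct summand. Hence it suffices to establish finite-dimensionality of $H^{d-1}(\Gamma_0, \Q)$.

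This latter fact is a direct consequence of the main theorem of \cite{Harder}, which computes the rational cohomology of $S$-arithmetic subgroups of semisimple groups over global function fields and in particular gives finite-dimensionality of $H^i(\Gamma_0, \Q)$ in every degree $i$ (with an explicit decomposition into cuspidal and Eisenstein summands). Specialising to $i = d-1$ yields condition (5).

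The main difficulty here is, I expect, not conceptual but bibliographic: extracting from \cite{Harder} the clean statement of finite-dimensionality in the top degree $d-1$ of the Bruhat-Tits building, since Harder's results are formulated through a function-field analogue of Borel-Serre reduction theory and decompose cohomology into cuspidal and Eisenstein pieces rather than directly asserting dimension bounds in a fixed degree. A fallback direct argument would be to analyse the action of $\Gamma_0$ on $\cBT_\bullet$: the quotient $\Gamma_0 \backslash \cBT_\bullet$ has finitely many cusps, each controlled by a parabolic whose unipotent radical is a $p$-group and hence contributes trivially to $\Q$-cohomology, leaving only the finite-dimensional contributions coming from the Levi components, as one sees already in the classical case $d=2$ with $\Gamma_0 = \mathrm{SL}_2(\mathbb{F}_q[t])$.
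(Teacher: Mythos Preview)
Your approach is correct and matches the paper's: the paper's proof is the single line ``This follows from \cite[p.136, Satz 2]{Harder}.'' Your explicit reduction to $\Gamma_0 = \Gamma \cap \mathrm{SL}_d(F)$ via restriction--corestriction makes precise why Harder's result (stated for $S$-arithmetic subgroups of semisimple groups) applies, and the bibliographic difficulty you anticipate is resolved by that specific citation to Satz~2.
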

\begin{proof}
This follows from \cite[p.136, Satz 2]{Harder}.
 \end{proof}

\subsection{Arithmetic quotient of the Bruhat-Tits building}
\label{sec:explicit beta2}
Let us define simplicial complex $\Gamma \backslash \cBT_\bullet$
for an arithmetic subgroup $\Gamma$
in this section.

We need a lemma.
\begin{lem} \label{lem:stabilizer}
Let $i \ge 0$ be an integer, let
$\sigma \in \cBT_i$ and let
$v,v' \in V(\sigma)$ be two vertices with
$v \neq v'$. Suppose that
an element $g \in \GL_d(K)$ satisfies
$|\det\, g|_\infty =1$. Then we have
$gv \neq v'$.
\end{lem}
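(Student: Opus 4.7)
The plan is to translate the statement into a lattice-index (equivalently Haar-measure) computation. By the description of $\cBT_\bullet$ in Section~\ref{Bruhat-Tits}, I can represent $\sigma$ by a sequence $(L_j)_{j \in \Z}$ with $L_j \supsetneq L_{j+1}$ and $L_{j+i+1} = \varpi L_j$, and with $V(\sigma) = \{\cl(L_0),\dots,\cl(L_i)\}$. Since $v\neq v'$ and the $\cl(L_j)$ for $0\le j\le i$ are distinct, after swapping $v$ and $v'$ if necessary (which does not affect the conclusion, since $g^{-1}$ also satisfies $|\det g^{-1}|_\infty = 1$) I can pick indices $0\le a < b\le i$ so that $v = \cl(L_a)$ and $v' = \cl(L_b)$. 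This yields a chain $L_a \supsetneq L_b \supsetneq \varpi L_a$. I will set $L = L_a$, $L' = L_b$, and let $r = \dim_{k}(L/L')$, where $k = \cO/\varpi\cO$ is the residue field with $q$ elements; the strict inclusions above force $1 \le r \le d-1$.

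Next, assume for contradiction that $gv = v'$, which means $gL$ and $L'$ are homothetic, hence $gL = \varpi^j L'$ for some $j\in\Z$. The key identities I will use involve the multiplicative ``generalized index'' $[M:N]\in q^\Z$ on pairs of $\cO$-lattices in $V$, defined to equal $|M/N|$ when $N\subset M$ and extended by $[M:N]=[M:M']\cdot[M':N]^{-1}[M':N]$ for a common refinement $M'$. The two computations I need are $[L:\varpi^j L'] = [L:L'] \cdot [L':\varpi^j L'] = q^{r+jd}$ and $[L:gL] = |\det g|_\infty^{-1}$; the latter is the standard relationship between determinant and covolume, verified by choosing any $\cO$-basis of $L$ and writing the matrix of $g$ in that basis. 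Equating the two expressions gives $|\det g|_\infty = q^{-(r+jd)}$, and the hypothesis $|\det g|_\infty = 1$ then forces $d \mid r$, contradicting $1 \le r \le d-1$.

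The main subtlety is purely bookkeeping: one must choose representatives of the homothety classes so that the lattices sit in a clean inclusion chain, and one must fix a convention for the covolume/index so that the sign of the exponent in $|\det g|_\infty$ comes out right; neither is genuinely difficult. The case $i=0$ is vacuous because $V(\sigma)$ has only one vertex. The proof uses nothing beyond the lattice-theoretic definition of $\cBT_\bullet$ from Section~\ref{Bruhat-Tits} and the basic link between $|\det g|_\infty$ and lattice covolumes, so no appeal to the building's simplicial structure beyond the chain condition on $(L_j)$ is required.
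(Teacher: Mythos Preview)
Your proof is correct and follows essentially the same approach as the paper's: both reduce the claim to a Haar-measure/lattice-index computation showing that $gv=v'$ would force two distinct lattices in the chain to have equal covolume, a contradiction. The paper phrases the endgame slightly differently (it uses the full chain indexing $L_{j'+(i+1)k}=\varpi^k L_{j'}$ and concludes $j=j'+(i+1)k$ directly, avoiding your swap and the divisibility $d\mid r$), but the substance is the same.
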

\begin{proof}
Let $\wt{\sigma}$ be an element
$(L_j)_{j \in \Z}$ in $\wt{\cBT}_i$ 
such that the class of $\wt{\sigma}$ in $\cBT_i$ is equal
to $\sigma$. 
There exist two integers $j,j' \in \Z$
such that $v$, $v'$ is the class of $L_j$, $L_{j'}$,
respectively.
Assume that $g v =v'$. Then there exists an integer
$k \in \Z$ such that $L_j g^{-1}
= \varpi_\infty^{k} L_{j'} = L_{j' + (i+1) k}$.
Let us fix a Haar measure $d\mu$ of the $K$-vector space
$V_\infty=K^{\oplus d}$. 
As is well-known, the push-forward of $d\mu$
with respect to the automorphism $V_\infty \to V_\infty$
given by the right multiplication by $\gamma$ is equal
to $|\det\, \gamma|_\infty^{-1} d\mu$ 
for every $\gamma \in \GL_d(K)$.
Since $|\det\, g|_\infty =1$, it follows from the
equality $L_j g^{-1} = L_{j'+(i+1)k}$ that
the two $\cO_\infty$-lattices $L_j$ and $L_{j'+(i+1)k}$ have
a same volume with respect to $d\mu$.
Hence we have $j=j'+(i+1)k$, which implies 
$L_j = \varpi_\infty^k L_{j'}$. It follows that the class of
$L_j$ in $\cBT_0$ is equal to the class of $L_{j'}$,
which contradicts the assumption $v \neq v'$.
 \end{proof}

Let 
$\Gamma \subset \GL_d(K)$
be an arithmetic subgroup.

It follows from Lemma~\ref{lem:stabilizer}
(using Condition (2) of Section~\ref{sec:def Gamma})
that for each $i \ge 0$
and for each $\sigma \in \cBT_i$,
the image of $V(\sigma)$ under the 
surjection $\cBT_0 \surj \Gamma \bsl \cBT_0$
is a subset of $\Gamma \bsl \cBT_0$
with cardinality $i+1$.
We denote this subset by $V(\cl(\sigma))$, since
it is easily checked that it depends only on the class 
$\cl(\sigma)$ of $\sigma$ in $\Gamma \bsl \cBT_i$.
Thus the collection
$\Gamma \bsl \cBT_\bullet =(\Gamma \bsl \cBT_i)_{i \ge 0}$ 
has a canonical structure of a simplicial complex such that
the collection of the canonical surjection
$\cBT_i \surj \Gamma \bsl \cBT_i$ 
is a map of simplicial complexes 
$\cBT_\bullet \surj \Gamma \bsl \cBT_\bullet$.

\subsection{Modular symbols} 
\label{sec:def modular symbol}
Let $v_1,\dots, v_d$ be an $F$-basis 
(that is, a basis of $F^{\oplus d}$
regarded as a basis of $K^{\oplus d}$).
We consider the composite
\begin{equation} \label{quasifinite}
A_\bullet \xto{\iota_{v_1,\ldots,v_d}}
\cBT_\bullet \to \Gamma \bsl \cBT_\bullet.
\end{equation}
Condition (4) implies that 
the map (\ref{quasifinite}) 
is a finite map of simplicial complexes
in the sense of Section~\ref{sec:quasifinite}.
It follows 
that the map (\ref{quasifinite}) induces a homomorphism
$$
H^\BM_{d-1}(A_\bullet, \Z) \to
H^\BM_{d-1}(\Gamma \bsl \cBT_\bullet, \Z).
$$
We let $\beta_{v_1,\ldots,v_d}
\in H^\BM_{d-1}(\Gamma \bsl \cBT_\bullet, \Z)$ denote the
image under this homomorphism 
of the element $\beta \in H^\BM_{d-1}(A_\bullet, \Z)$ introduced
in Section~\ref{sec:fundamental class}.
We call this the class of the apartment
$A_{v_1,\dots, v_d,\bullet}$.

\subsection{Main Theorem}
We are ready to state our theorem.
\begin{thm}\label{lem:apartment}
Let $\Gamma \subset \GL_d(K)$
be an arithmetic subgroup.
The image of the canonical map (see Section~\ref{sec:def homology}
for the definition)
\[
H_{d-1}(\Gamma \backslash \cBT_\bullet, \Q) \to 
H^\BM_{d-1}(\Gamma \backslash \cBT_\bullet, \Q) 
\]
is contained in the sub $\Q$-vector space 
generated by the classes of apartments
associated to $F$-bases.
\end{thm}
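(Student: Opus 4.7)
The plan is to work $\Gamma$-equivariantly using Werner's compactification $\BTbar$ of $|\cBT_\bullet|$, reduce the statement to a chain-level claim on a free quotient, and then realize each ordinary homology class as a $\Q$-linear combination of apartment classes via a continuous map constructed from $F$-bases. Conditions~(2) and~(3) on $\Gamma$ ensure that stabilizers of simplices act trivially on their vertex sets and are finite, so that the (Borel-Moore) homology of $\Gamma\backslash|\cBT_\bullet|$ with $\Q$-coefficients coincides with the $\Gamma$-equivariant version computed via the bar construction $|E\Gamma_\bullet|$. Since $\Gamma$ acts freely on $|E\Gamma_\bullet|$, the space $\Gamma\backslash(\BTbar\times|E\Gamma_\bullet|)$ provides a free model whose chain complex computes the relevant equivariant (Borel-Moore) homology of the compactified building, and the open inclusion $|\cBT_\bullet|\hookrightarrow\BTbar$ yields a comparison of chain complexes which is central to what follows.

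The key step is the chain-level construction of map~(4) of Section~\ref{sec:4 and 5}. For each $F$-basis $v_1,\ldots,v_d$, Werner's identification of points of $\BTbar$ with equivalence classes of semi-norms on $K^{\oplus d}$ gives, by Lemma~\ref{lem:cont s}, a continuous map $s(v_1,\ldots,v_d)$ from a standard $(d-1)$-simplex into $\BTbar$; the interior is sent into $|\cBT_\bullet|$ and parametrizes the apartment $A_{v_1,\ldots,v_d,\bullet}$, while the boundary of the parameter simplex is sent to degenerate semi-norms in $\BTbar\setminus|\cBT_\bullet|$. Coupling with bar-construction simplices $[g_0,\ldots,g_{d-1}]\in|E\Gamma_\bullet|$, one forms singular chains $s(v_1,\ldots,v_d)\times[g_0,\ldots,g_{d-1}]$ on $\BTbar\times|E\Gamma_\bullet|$. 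By the finiteness in condition~(4) applied to $F$-rational bases, their pushforwards to $\Gamma\backslash(\BTbar\times|E\Gamma_\bullet|)$ are well-defined Borel-Moore chains and restrict, over the open subset coming from $|\cBT_\bullet|$, to the apartment classes $\beta_{v_1,\ldots,v_d}$ of Section~\ref{sec:def modular symbol}.

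With these building blocks the main argument runs as follows. Let $\xi\in H_{d-1}(\Gamma\backslash|\cBT_\bullet|,\Q)$ be represented by a finitely supported $(d-1)$-cycle $c$. Lift $c$ to the equivariant chain complex on $|\cBT_\bullet|\times|E\Gamma_\bullet|$ and then extend it to a chain $\bar{c}$ on the compactified product $\BTbar\times|E\Gamma_\bullet|$. Since $c$ was a cycle, the boundary of $\bar{c}$ is supported in $(\BTbar\setminus|\cBT_\bullet|)\times|E\Gamma_\bullet|$, and map~(4) together with its reverse-direction partner map~(5) yields a chain-level identity expressing $\partial\bar{c}$ as the boundary of a finite $\Q$-linear combination of chains $s(v_1,\ldots,v_d)\times[g_0,\ldots,g_{d-1}]$ coming from $F$-bases. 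Subtracting this combination from $\bar{c}$ produces a Borel-Moore cycle on $\Gamma\backslash|\cBT_\bullet|$ whose class is the image of $\xi$ minus a $\Q$-linear combination of apartment classes, proving the containment.

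The main obstacle is the construction of map~(4) and the verification of the identity just used. One must check: continuity of $s(v_1,\ldots,v_d)$ into $\BTbar$ with the semi-norm topology (which is specific to Werner's compactification, as emphasized in the discussion following Theorem~\ref{thm:intro}); equivariance and simplicial compatibility so that the products $s(v_1,\ldots,v_d)\times[g_0,\ldots,g_{d-1}]$ descend to well-defined equivariant Borel-Moore chains on the quotient; and that $F$-bases alone suffice, which combines the density of $F^{\oplus d}$ in $K^{\oplus d}$ with the finite support of the ordinary cycle $c$ and the quasi-finiteness in condition~(4). Once these are in place, the conclusion follows by a diagram chase between the equivariant chain complexes of $|\cBT_\bullet|$ and $\BTbar$.
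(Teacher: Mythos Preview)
Your proposal assembles the right ingredients --- Werner's compactification, equivariant homology via $|E\Gamma_\bullet|$, the continuous map $s(v_1,\ldots,v_d)$ --- but the central mechanism is misidentified, and the argument as written does not close. The paper does not start from a cycle $c$ on $\Gamma\backslash|\cBT_\bullet|$, lift it, and try to decompose $\partial\bar c$ on the boundary stratum. Instead it exploits the isomorphism $H_{d-1}(\Gamma\backslash\cBT_\bullet,\Q)\cong H_{d-1}(\Gamma,\Q)$ (the composite of (2) and (4)) and realizes map~(4) explicitly at chain level: for a \emph{single} fixed vector $v\in F^{\oplus d}\setminus\{0\}$ one sets $\eta_v([g_0,\ldots,g_r])=s(g_0v,\ldots,g_rv)\times[g_0,\ldots,g_r]$, and this is a \emph{chain map} from the bar resolution to singular chains on $\BTbar\times|E\Gamma_\bullet|$, in fact a quasi-isomorphism of free $\Z[\Gamma]$-resolutions of $\Z$. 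The $F$-rationality of the vectors $g_iv$ then comes for free from $\Gamma\subset\GL_d(F)$ and $v\in F^{\oplus d}$; the density of $F^{\oplus d}$ in $K^{\oplus d}$ is neither needed nor used. Every class in $H_{d-1}(\Gamma,\Q)$ is thus \emph{automatically} represented by a finite sum of chains $s(g_0v,\ldots,g_{d-1}v)\times[g_0,\ldots,g_{d-1}]$, and after passing to relative homology (map~(5)) the terms where $g_0v,\ldots,g_{d-1}v$ fail to be a basis die because their image already lies in $\partial\BTbar$ (Lemma~\ref{AF}).

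Your sentence ``Subtracting this combination from $\bar c$ produces a Borel-Moore cycle \ldots\ whose class is the image of $\xi$ minus a $\Q$-linear combination of apartment classes, proving the containment'' is a non-sequitur: you have only produced some Borel-Moore class equal to that difference, not shown it vanishes. The paper avoids this by arranging that the image of $\xi$ is \emph{equal} to an apartment combination from the outset, via the chain-level quasi-isomorphism $\eta_v$, rather than merely differing from one by something with boundary in $\partial\BTbar$. The remaining work in the paper (Lemma~\ref{lem:geom apartment} and the final comparison) is to check that the class $A_{v_1,\ldots,v_d;g_0,\ldots,g_{d-1}}$ in equivariant relative homology really maps under (3) to the modular symbol $\beta_{v_1,\ldots,v_d}$ in $H_{d-1}^\BM(\Gamma\backslash\cBT_\bullet,\Q)$; your outline does not address this identification either.
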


\section{Proof of Theorem~\ref{lem:apartment}}\label{Modular Symbols}
The purpose of this section is to prove 
Theorem~\ref{lem:apartment}.
It gives the description of the homology of certain arithmetic groups
in terms of the subspace generated by the classes of modular symbols 
inside the Borel-Moore homology
of the quotient of the Bruhat-Tits building.
The treatment of the modular symbols differs from the archimedean case 
(see \cite{AR})
in that the group does not act freely on the compactification
and in that an apartment is contractible as a subspace of the building.
To compare, we use equivariant homology (Section \ref{Equivariant homology}) of 
Werner's compactification (Section \ref{Werner's compactification}) 
as an intermediary object.

\subsection{Equivariant homology}\label{Equivariant homology}		
Let $\Gamma \subset \GL_d(K)$
be an arithmetic subgroup.
We define the simplicial set 
(not a simplicial complex)
$E\Gamma_\bullet$ as follows.
We define $E\Gamma_n=\Gamma^{n+1}$ to be the $(n+1)$-fold direct product of $\Gamma$ for $n\ge 0$.
The set $\Gamma^{n+1}$ is naturally regarded as the set of maps of sets
$\Map(\{0,\dots, n\}, \Gamma)$ and from this one obtains naturally the 
structure of a simplicial set.
We let $|E\Gamma_\bullet|$ denote the geometric realization of $E\Gamma_\bullet$.  
Then $|E\Gamma_\bullet|$ is contractible.
We let $\Gamma$ act diagonally on each $E\Gamma_n$ $(n\ge 0)$.  
The induced action on $|E\Gamma_\bullet|$ is free.

Let $M$ be a topological space on which $\Gamma$ acts.
The diagonal action of $\Gamma$ on $M\times |E\Gamma_\bullet|$ is free.
We let $H_*^\Gamma(M, B)= H_*(\Gamma\backslash(M \times |E\Gamma_\bullet|), B)$ where $B$ is a coefficient ring,
and call it the equivariant homology of $M$ with coefficients in $B$.  
We also use the relative version, 
and define equivariant cohomology in a similar manner. 

\subsection{Werner's compactification}\label{Werner's compactification}
In this section, we briefly recall the result of Werner (\cite{We2}, \cite{We1}).
\subsubsection{Semi-norms}
Let $W$ be an $K$-vector space.
We call a function $\gamma:W \to \R_{\ge 0}$
a semi-norm if the following conditions are satisfied:
\begin{enumerate}
\item $\gamma(\lambda w)=|\lambda| \gamma(w)$ for $\lambda \in F_\infty, w\in W$, 
\item $\lambda(w_1+w_2) \le \sup\{\gamma(w_1), \gamma(w_2)\}$ for $w_1,w_2 \in W$, 
\item There exists an element $w \in W$ 
satisfying $\gamma(w) \neq 0$.
\end{enumerate}
We say that two semi-norms are equivalent if and only if
one is a non-zero constant multiple of the other.

Let $V^* = \Hom_{K}(V,K)$
be the dual vector space of $V$.
We endow the set $S'$ of semi-norms on $V^*$ 
with the topology of pointwise convergence.
We give the set $S$ of equivalence classes of semi-norms 
the quotient topology.

%
\subsubsection{}
We write $\BTbar$ for the compactification of $\BT$ of Werner in \cite{We2}
(which uses lattices of smaller rank),
and let $\partial\BTbar=\BTbar\setminus \BT$.
The topological space $\BTbar$ is compact and contractible
(\cite[p.519, Theorem 4.1]{We2}).
The action of $\GL(V)$ is extended to $\BTbar$ (\cite[Theorem 4.2]{We1}).

By a theorem of Goldman-Iwahori (see \cite[Theorem 2.2]{De-Hu}), 
the set of equivalence classes of norms on $V^*$ is isomorphic to the set of points of 
the geometric realization of the Bruhat-Tits building for $\PGL(V^*)$.
In the paper of Werner \cite[Theorem 5.1]{We1}, this isomorphism is
extended to a canonical homeomorphism 
$S \cong \overline{|\cB\cT_{V^*,\bullet}|}'$
where $\overline{|\cB\cT_{V^*,\bullet}|}'$ is the compactification of $|\cB\cT_{V^*,\bullet}|$ 
using semi-norms.
We use the homeomorphism 
$\overline{|\cB\cT_{V^*,\bullet}|}' \cong \overline{|\cB\cT_{V,\bullet}|}$
of Werner (\cite{We1} p.518), and obtain a homeomorphism $S \cong \overline{|\cB\cT_{V,\bullet}|}$.


\subsection{}
Let us give an outline of the proof of Theorem~\ref{lem:apartment}
in this section.
We construct the following commutative 
diagram in Sections~\ref{sec:4 and 5} and~\ref{sec:2 and 3}:
\begin{equation}
\label{eqn:diagram0}
\xymatrix{
H_{d-1}(\GBT,\Q)
\ar[r]^{(1)}
&
H_{d-1}^\BM(\GBT,\Q)
\\
H_{d-1}^\Gamma(\cgBTdot,\Q)
\ar[u]^{(2)}_\cong
\ar[r]^{(5)\phantom{a;lskj}}
&
H_{d-1}^\Gamma
(\cgBTdot, \partial \cgBTdot; \Q)
\ar[u]^{(3)}
\\
H_{d-1}(\Gamma,\Q)
\ar[u]^{(4)}_{\cong\phantom{;lkasdjf;lk}}
}
\end{equation}
Here the map (1) is the map that appeared in the statement
of Theorem~\ref{lem:apartment}.
The map (5) is the pushforward map of homology.
The other maps will be constructed later.
It is easy to see that the groups
$H_{d-1}(\Gamma,\Q)$ and 
$H_{d-1}(\GgbarBTEG,\Q)$
are isomorphic since $\cgBTdot \times |E \Gamma_\bullet|$
is contractible and $\Gamma$ acts freely.
However, the key here is to construct (4) explicitly 
at the level of chain complexes in the direction indicated 
by the arrow above, so that we are able to compute
explicitly the image of the composite $(5)\circ (4)$.

The construction of the square is elementary, but there
is one problem which is caused by that
the isomorphism
between the Borel-Moore homology as defined in this
paper and the Borel-Moore homology of a topological
space in general is not found in the literature.
We resort to the well-known cases of the isomorphisms
for homology and for cohomology to circumvent this problem.
In order to do so, we use property (5) of the arithmetic 
group (Section~\ref{sec:def Gamma}) 
and take the dual twice.

\subsection{On the maps (4) and (5)}
\label{sec:4 and 5}
We construct the map (4) and show that it is an isomorphism.
This is done very explicitly, so that 
we are able to compute the image of the 
composite map
(5)(4) (Lemma~\ref{AF}).

\subsubsection{}
We let $C_\bullet$ denote the complex of $\Z[\Gamma]$-modules 
defined by $C_n=\Z[\Gamma^{n+1}] \,\,(n\ge 0)$
and the usual
boundary homomorphisms.
It is a free resolution of the trivial $\Z[\Gamma]$-module $\Z$.
The homology group of the $\Gamma$-coinvariants $C_{\Gamma,\bullet}$ of $C_\bullet$
is the group homology $H_*(\Gamma,\Z)$.

Let $D_n=\Z[\mathrm{Map}_{\mathrm{cont}}(\Delta_n, \overline{|\cB\cT_\bullet|}\times |E\Gamma_\bullet|)]$.
The usual boundary map 
turns $D_\bullet$ into a complex of $\Z[\Gamma]$-modules 
with $\Gamma$ acting on $\XbartimesEGamma$ diagonally.
It is a free resolution of the trivial $\Z[\Gamma]$-module
since the action of $\Gamma$ on $\XbartimesEGamma$ is free.

The $\Gamma$-coinvariants, denoted $D_{\Gamma,\bullet}$, is canonically
isomorphic to the module 
\[
\Z[\mathrm{Map}_{\mathrm{cont}}(\Delta_n, \Gamma \backslash \XbartimesEGamma)].
\]
Hence the homology group of the complex $D_{\Gamma,\bullet}$ is  $H_*^\Gamma(\overline{|\cB\cT_\bullet|},\Z)$.

Let $r \ge 0$ be an integer. Let 
$\Delta_r=\{(t_0,\dots, t_r) \in \R^{r+1}| \sum t_i=1, 0\le t_i \le 1\}$ 
be the (geometric) $r$-simplex.
Given $v_0,\dots, v_r \in V\setminus \{0\}$,
we construct a map $s(v_0,\dots, v_r):\Delta_r \to S \cong \overline{|\cB\cT_\bullet|}$ as follows.
For $(t_0,\dots, t_r)\in \Delta_r$ and $f \in V^*$, 
we set
\[
s(v_0,\dots, v_r)(t_0,\dots, t_r)(f)=
\sup_{0 \le i \le r}
|f(v_i)| q_\infty^{-1/t_i}.
\]
Here,  we set $q_\infty^{-1/t_i}=0$ if $t_i=0$.
It is easy to check that 
$s(v_0,\dots, v_r)(t_0,\dots, t_r)$ is a semi-norm
on $V^*$
for each $(t_0,\dots, t_r)\in \Delta_r$.

\begin{lem}
\label{lem:cont s}
The map $s(v_0,\dots, v_r)$ is continuous.
\end{lem}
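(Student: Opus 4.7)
The plan is to factor the map through the space $S'$ of genuine semi-norms on $V^*$ (not just equivalence classes). That is, define
\[
\tilde{s}(v_0,\dots,v_r): \Delta_r \to S', \qquad (t_0,\dots,t_r) \mapsto \bigl(f \mapsto \sup_{0 \le i \le r} |f(v_i)|\, q_\infty^{-1/t_i}\bigr),
\]
and note that the original map $s(v_0,\dots,v_r)$ is the composition of $\tilde{s}(v_0,\dots,v_r)$ with the quotient map $S' \twoheadrightarrow S$. Since $S$ carries the quotient topology, it suffices to show that $\tilde{s}(v_0,\dots,v_r)$ is continuous with respect to the topology of pointwise convergence on $S'$.

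First I would verify that the map is well-defined, i.e.\ that for each $(t_0,\dots,t_r) \in \Delta_r$ the formula indeed produces a semi-norm on $V^*$. Conditions (1) and (2) in the definition of a semi-norm are immediate from the fact that $|\cdot|$ is a non-archimedean absolute value. For condition (3), one picks an index $i$ with $t_i > 0$ (which exists since $\sum_i t_i = 1$); since $v_i \neq 0$ by hypothesis, there is some $f \in V^*$ with $f(v_i) \neq 0$, and then $\tilde{s}(v_0,\dots,v_r)(t_0,\dots,t_r)(f) \geq |f(v_i)|\, q_\infty^{-1/t_i} > 0$.

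For continuity, by the definition of the topology of pointwise convergence on $S'$, it suffices to check that for every fixed $f \in V^*$ the function
\[
\Delta_r \to \R_{\geq 0}, \qquad (t_0,\dots,t_r) \mapsto \sup_{0 \leq i \leq r} |f(v_i)|\, q_\infty^{-1/t_i}
\]
is continuous. This is the supremum of the finitely many functions $(t_0,\dots,t_r) \mapsto |f(v_i)|\, q_\infty^{-1/t_i}$, so it suffices to show that each of these is continuous on $\Delta_r$. On the open locus $\{t_i > 0\}$ this is obvious. The only subtle point, which I would flag as the main thing to verify, is continuity at boundary points where $t_i = 0$: here the convention $q_\infty^{-1/0} := 0$ must agree with the one-sided limit, and indeed $q_\infty^{-1/t_i} \to 0$ as $t_i \to 0^+$ since $q_\infty > 1$ implies $-1/t_i \to -\infty$. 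This verifies continuity of each summand, hence of the supremum, and thus of $\tilde{s}(v_0,\dots,v_r)$, completing the proof once composed with $S' \twoheadrightarrow S$.
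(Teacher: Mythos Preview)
Your proof is correct and follows essentially the same approach as the paper: both reduce continuity to checking, via the pointwise-convergence topology on $S'$ and the quotient topology on $S$, that each evaluation $(t_0,\dots,t_r)\mapsto s(v_0,\dots,v_r)(t_0,\dots,t_r)(f)$ is continuous on $\Delta_r$. The paper states this in a single sentence, while you have usefully spelled out the factorization through $S'$, the well-definedness as a semi-norm, and the boundary behavior at $t_i=0$.
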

\begin{proof}
This is immediate from the definition of
the topology on $S'$ and on $S$, since for each $f \in V^*$, 
the map $s(v_0,\dots, v_r)(t_0,\dots, t_r)(f): 
\Delta_r \to \R_{\ge 0}$ is continuous.
 \end{proof}

\subsubsection{}
Let $r \ge 0$ be an integer.
Given $v\in V\setminus\{0\}$ and $[g_0,\dots,g_r]\in \Z[\Gamma^{r+1}]=C_r$,
we set $\eta_v([g_0,\dots, g_r])=
s(g_0 v,\dots,g_r v) \times [g_0,\dots,g_r]
: \Delta_r \to \overline{|\cB\cT_\bullet|}
\times |E\Gamma_{\bullet}|$.
Here $[g_0,\dots,g_r]$ on the right hand side is regarded as 
the canonical inclusion 
$\Delta_r \inj |E\Gamma_\bullet|$ associated to
the $r$-simplex $(g_0,\dots, g_r)$ of $E\Gamma_\bullet$.
Extending this by linearity, we obtain a map of complexes 
$\eta_v:C_\bullet \to D_\bullet$.

\begin{lem}
The map $\eta_v$ is a quasi-isomorphism. 
\end{lem}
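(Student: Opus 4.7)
The strategy is to show that both $C_\bullet$ and $D_\bullet$ have homology $\Z$ concentrated in degree $0$, that $\eta_v$ is a chain map, and that it induces the identity on $H_0$; these three facts together imply that $\eta_v$ is a quasi-isomorphism.

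For $C_\bullet$ this is classical. For $D_\bullet$, the homology of the underlying complex of abelian groups is, by its very definition as a complex of singular chains, the singular homology of $\BTbar \times |E\Gamma_\bullet|$. This space is a product of two contractible spaces (the first by Werner's result recalled in Section~\ref{Werner's compactification}, the second as noted in Section~\ref{Equivariant homology}), hence is itself contractible, and its singular homology is $\Z$ in degree $0$ and vanishes in positive degrees.

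To check that $\eta_v$ is a chain map, I would verify that for each $i \in \{0,\ldots,n\}$ the $i$-th face of the singular simplex $\eta_v([g_0,\ldots,g_n]) = s(g_0v,\ldots,g_nv) \times [g_0,\ldots,g_n]$ coincides with $\eta_v([g_0,\ldots,\widehat{g_i},\ldots,g_n])$. On the $|E\Gamma_\bullet|$-factor this is tautological from the simplicial structure on $E\Gamma_\bullet$. On the $\BTbar$-factor, the face corresponds to restricting to $t_i=0$ in $\Delta_n$, and the convention $q_\infty^{-1/0}=0$ built into the definition of $s(v_0,\ldots,v_r)$ kills precisely the $i$-th term in the defining supremum, yielding $s(g_0v,\ldots,\widehat{g_iv},\ldots,g_nv)$. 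Summing over $i$ with alternating signs then gives $\partial \eta_v = \eta_v \partial$.

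Finally, in degree zero $\eta_v$ sends the generator $[g] \in C_0$ to the $0$-simplex represented by the point $(s(gv),[g]) \in \BTbar \times |E\Gamma_\bullet|$, whose class in $H_0(D_\bullet)=\Z$ is the standard generator; the class of $[g]$ in $H_0(C_\bullet)=\Z$ is likewise the standard generator, so the induced map on $H_0$ is the identity. The main technical point is not homological-algebraic but the face-map bookkeeping on the two factors, which becomes transparent once one notes the role of the $q_\infty^{-1/0}=0$ convention; the only non-formal ingredient is the contractibility of Werner's compactification.
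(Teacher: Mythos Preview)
Your proof is correct and follows essentially the same approach as the paper: both complexes are identified as resolutions of $\Z$ (using contractibility of $\BTbar \times |E\Gamma_\bullet|$ for $D_\bullet$), and the map is checked to be compatible with the augmentations. You supply more detail than the paper on the chain-map verification and the degree-$0$ computation, but the underlying argument is the same.
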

\begin{proof}
We have seen that both $C_\bullet$ and $D_\bullet$ are free $\Z[\Gamma]$-resolutions
of the trivial $\Z[\Gamma]$-module $\Z$.   So we only need to check at degree 0,
that is, the commutativity of the following diagram:
\[
\begin{CD}
C_0 @>>{\eta_v}> D_0  \\
@VVV        @VVV      \\
\Z @>>{\id}>   \Z
\end{CD}
\]
where the vertical homomorphisms are augmentations. This is clear.
 \end{proof}

Taking $\Gamma$-coinvariants, we obtain a map of complexes $C_{\Gamma,\bullet} \to D_{\Gamma,\bullet}$.
It induces a map of homology $H_*(\Gamma,\Z) \to H_*^\Gamma(\overline{|\cB\cT_\bullet|},\Z)$.
This is an isomorphism since both $C_\bullet$ and $D_\bullet$ are free $\Z[\Gamma]$-resolutions of $\Z$.
We define the map (4) to be this map tensored by $\Q$.

\subsubsection{}
For $n\ge 0$, let 
$$
\wt{D}_{\Gamma,n}=
\Z[\mathrm{Map}_{\mathrm{cont}}(\Delta_n,\Gamma \backslash \XbartimesEGamma)]
/\Z[\mathrm{Map}_{\mathrm{cont}}
(\Delta_n,\Gamma \backslash \partial \XbartimesEGamma)].
$$
Then 
$H_*^\Gamma(\overline{|\cB\cT_\bullet|},\partial\overline{|\cB\cT_\bullet|};\Z)$ 
is the homology group of the 
complex $\wt{D}_{\Gamma,\bullet}$.
The canonical surjection at each degree induces 
a map of complexes $D_{\Gamma,\bullet}\to \wt{D}_{\Gamma,\bullet}$,
and a homomorphism 
$H_*^\Gamma(\overline{|\cB\cT_\bullet|},\Z) \to 
H_*^\Gamma(\overline{|\cB\cT_\bullet|}, 
\partial\overline{|\cB\cT_\bullet|};\Z)$.
The map (5) of the diagram~\eqref{eqn:diagram0}
is this map tensored by $\Q$.

Let $v_1,\dots, v_d\in V$ be a basis and let $g_0,\dots, g_{d-1}\in \Gamma$.
By construction, the image of the faces of $\Delta_{d-1}$ by the continuous map
\[ s(v_1,\dots, v_d) \times [g_0,\dots, g_{d-1}]:\Delta_{d-1} \to \XbartimesEGamma \]
is contained in $\partial\XbartimesEGamma$.
We let $A_{v_1,\dots,v_d;g_0,\dots,g_{d-1}}$ denote the class of this continuous function in $\wt{D}_{\Gamma,d-1}$
and in $H_{d-1}^\Gamma(\overline{|\cB\cT_\bullet|}, \partial \overline{|\cB\cT_\bullet|};\Z)$.
We let $\cA_F^\mathrm{rel}$ denote the submodule of $H_\Gamma^{d-1}(\overline{|\cB\cT_\bullet|}, \partial\overline{|\cB\cT_\bullet|};\Z)$
generated by elements of the form $A_{v_1,\dots, v_d;g_0,\dots, g_{d-1}}$ with
$g_i \in \Gamma \,\, (0\le i \le d-1)$ and 
$v_1,\dots, v_d \in V_F=F^{\oplus d} \subset K^{\oplus d}=V$ 
an $F$-basis.

\begin{lem}\label{AF}
The image of 
\[
H_{d-1}(\Gamma, \Q) 
\xto{(4)} 
H_{d-1}^\Gamma(\overline{|\cB\cT_\bullet|}, \Q)
\xto{(5)} 
H_{d-1}^\Gamma(\overline{|\cB\cT_\bullet|},
\partial\overline{|\cB\cT_\bullet|}; \Q)
\]
is contained in the sub 
$\Q$-vector space generated by $\cA_F^\mathrm{rel}$.
\end{lem}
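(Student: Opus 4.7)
The plan is to compute the composite $(5)\circ(4)$ at the chain level and show that every generator of $C_{\Gamma,d-1}$ maps either into $\cA_F^\mathrm{rel}$ or to zero in $\wt{D}_{\Gamma,d-1}$. Fix any nonzero vector $v \in V_F = F^{\oplus d}$. The chain map $\eta_v : C_\bullet \to D_\bullet$ defining (4), followed by the canonical surjection $D_{\Gamma,\bullet} \surj \wt{D}_{\Gamma,\bullet}$ defining (5), sends a generator $[g_0, \ldots, g_{d-1}] \in C_{\Gamma, d-1}$ to the class in $\wt{D}_{\Gamma, d-1}$ of the continuous map
\[
s(g_0 v, \ldots, g_{d-1} v) \times [g_0, \ldots, g_{d-1}] : \Delta_{d-1} \to \Gamma \bsl \XbartimesEGamma.
\]
Since each $g_i \in \Gamma \subset \GL_d(F)$ and $v \in V_F$, every $g_i v$ lies in $V_F$.

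The argument then splits into two cases according to whether $g_0 v, \ldots, g_{d-1} v$ are $F$-linearly independent; note that for $d$ vectors in $V_F$ this is equivalent to $K$-linear independence, because an $F$-basis of $V_F$ is automatically a $K$-basis of $V$. In the independent case they form an $F$-basis of $V_F$, and the associated chain coincides by definition with $A_{g_0 v, \ldots, g_{d-1} v;\, g_0, \ldots, g_{d-1}} \in \cA_F^\mathrm{rel}$. In the dependent case they span a proper $K$-subspace $W \subsetneq V$. For any $(t_0, \ldots, t_{d-1}) \in \Delta_{d-1}$ and any nonzero $f \in V^*$ vanishing on $W$ (such $f$ exists because $W \neq V$), the defining formula for $s$ gives $s(g_0 v, \ldots, g_{d-1} v)(t_0, \ldots, t_{d-1})(f) = 0$, so this seminorm is not a norm. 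Under Werner's homeomorphism $S \cong \BTbar$ recalled in Section~\ref{Werner's compactification}, classes of norms correspond to points of $\BT$ while classes of non-norm seminorms correspond to points of $\partial\BTbar$; hence the image of $s(g_0 v, \ldots, g_{d-1} v)$ is entirely contained in $\partial\BTbar$, so the continuous map above factors through $\Gamma \bsl (\partial\BTbar \times |E\Gamma_\bullet|)$ and represents the zero class in $\wt{D}_{\Gamma, d-1}$.

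Combining the two cases, every element of $C_{\Gamma, d-1}$, and in particular every cycle, maps to a $\Z$-linear combination of generators of $\cA_F^\mathrm{rel}$, so the image of $(5)\circ(4)$ on homology lies in the $\Q$-subspace generated by $\cA_F^\mathrm{rel}$, as required. The only nontrivial point is the identification in the dependent case of seminorms with nontrivial null space as boundary points of $\BTbar$; this is explicit in Werner's construction but needs to be invoked carefully. Everything else is a direct unpacking of the explicit chain-level formulas for $\eta_v$ and the quotient $D_{\Gamma,\bullet} \surj \wt{D}_{\Gamma,\bullet}$.
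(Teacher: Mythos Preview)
Your proposal is correct and follows essentially the same approach as the paper: fix $v\in V_F\setminus\{0\}$, trace a generator $[g_0,\ldots,g_{d-1}]$ through $\eta_v$ and the quotient map, observe that the $g_iv$ lie in $V_F$, and split into the basis/non-basis cases, with the non-basis case landing in the boundary because the resulting seminorm has nontrivial kernel. Your write-up is in fact slightly more detailed than the paper's in justifying why a degenerate seminorm corresponds to a boundary point, but the argument is the same.
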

\begin{proof}
Take a $v \in V_F \backslash \{0\} \subset V\backslash \{0\}$.
Consider the map of complexes 
$C_{\Gamma,\bullet} \to D_{\Gamma, \bullet} \to \wt{D}_{\Gamma,\bullet}$
where the first map is $\eta_v$ and the second map is the canonical map.
The image of $C_{\Gamma,d-1}$ is of the form 
\[ s(g_0v,\dots, g_{d-1}v)\times [g_0,\dots,g_{d-1}] \]
for some $g_0, \dots, g_{d-1} \in \Gamma$.
Since $v \in F^{\oplus d}$ and $g_0,\dots, g_{d-1} \in \Gamma \subset \GL_d(F)$ by the condition (1) on $\Gamma$,
the vectors $g_0 v, \dots, g_{d-1}v$ are $F$-vectors.
If $g_0v,\dots, g_{d-1}v$ do not form a basis,
then the element above is zero in $H_{d-1}^\Gamma(\BTbar,\partial\BTbar;\Q)$
because by the construction of $s$ the image of the map above is contained in 
$\Gamma \backslash \partial\BTbar\times |E\Gamma_\bullet|$.
 \end{proof}


\subsection{The maps (2) and (3)}
\label{sec:2 and 3}
Given a $\Q$-vector space $A$, 
let $A^*=\Hom(A, \Q)$ denote the dual.
In what follows, the coefficient ring is $\Q$
unless otherwise specified.

\subsubsection{}
Consider the following diagram.
\begin{equation}
\label{eqn:diagram1}
\xymatrix{
H_{d-1}(\GBT)
\ar[d]^{(6)}_\cong
\ar[r]^{(1)}
&
H_{d-1}^\BM(\GBT)
\ar[dd]_\cong^{(7)}
\\
H_{d-1}(\GBT)^{**}
\ar[d]_=^{(8)}
\\
H^{d-1}(\GBT)^*
\ar[r]^{(9)}
&
H_c^{d-1}(\GBT)^*
}
\end{equation}
The map (1) is the canonical map from
homology to Borel-Moore homology.
The map (6) is the canonical map 
$A\to A^{**}$ for
$A=H_{d-1}(\GBT)$.
We will see later (Corollary~\ref{cor:5.7})
that 
(6) is
an isomorphism.
The map (7) is the isomorphism 
given by the map in the universal 
coefficient theorem 
(see Section~\ref{univ_coeff}).
The map (8) 
is the dual of the fact that 
cohomology is the dual of homology.
The map (9) 
is the dual of the 
canonical map from cohomology with compact support 
to cohomology.  It follows from the definitions 
that the diagram is commutative.

\subsubsection{}
Consider the diagram
\begin{equation}
\label{eqn:diagram2}
\xymatrix{
H^{d-1}(\GBT)
&
H_c^{d-1}(\GBT)
\ar[l]^{(9)'}
\ar[d]^{(11)}_\cong
\\
&
\varinjlim_L H^{d-1}(\GBT, L)
\\
H^{d-1}(\GgBT)
\ar[uu]^\cong_{(10)}
&
\varinjlim_L
H^{d-1}(\GgBT, |L|),
\ar[l]^{(13)\phantom{;al;lkj}}
\ar[u]_{(12)}^\cong
}
\end{equation}
where $L$ runs over the subsimplicial
complexes of $\GBT$ such that
the complement $|\GBT| \setminus |L|$
is covered by a finite number of simplices.

The map (9)' is the forget support map,
whose dual is the map (9) in diagram \eqref{eqn:diagram1}.
The map (10) is the canonical map
from singular cohomology to cellular cohomology
(see Section~\ref{sec:cellular})
The map (11) is obtained from the definition.
The map (12) at each stage is the 
canonical map from singular cohomology 
to cellular cohomology.
The map (13) is the limit of the pullback maps.
It is easy to check that the diagram is commutative.
\begin{lem}
The map (11) is an isomorphism.
\end{lem}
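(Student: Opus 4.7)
The plan is to verify the isomorphism at the level of cochain complexes, using the cellular definitions recalled in Section~\ref{sec:def homology}. Write $Y_\bullet = \GBT$ and let $Y_i'$ be as in that section. By construction, the cochain complex computing $H_c^{d-1}(Y_\bullet,\Q)$ is $\Map^{\mathrm{fin}}_{\pmone}(Y_i',\Q)$, while the relative cochain complex $C^\bullet(Y_\bullet,L;\Q)$ computing $H^{d-1}(Y_\bullet,L;\Q)$ is the kernel of the restriction $\Map_{\pmone}(Y_i',\Q) \to \Map_{\pmone}(L_i',\Q)$, i.e.\ $\pmone$-equivariant cochains that vanish on $L$. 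The map~(11) is then induced by the inclusion of each such kernel into $\Map^{\mathrm{fin}}_{\pmone}(Y_i',\Q)$, which is well-defined precisely because the indexing condition on $L$ forces any cochain vanishing on $L$ to have finite support.

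First I would set up a dictionary between the indexing subcomplexes $L$ (those with $|\GBT|\setminus|L|$ covered by finitely many simplices) and finite up-sets $T \subset \coprod_i Y_i$ in the face poset, via $T = Y_\bullet \setminus L$. The requirement that $L$ be closed under taking faces translates exactly to $T$ being closed under passing to cofaces. Using that $\GBT$ is locally finite of dimension at most $d-1$ (Section~\ref{sec:dimension}, combined with the finiteness of the residue field of $K$, which yields local finiteness of $\cBT_\bullet$ and hence of its quotient), the coface-closure of any finite set of simplices is again finite.

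Next I would check that at each degree $i$ the canonical map
\[
\varinjlim_L \ker\bigl(\Map_{\pmone}(Y_i',\Q) \to \Map_{\pmone}(L_i',\Q)\bigr) \xto{\ } \Map^{\mathrm{fin}}_{\pmone}(Y_i',\Q)
\]
is a bijection: a cochain lies in the left-hand side iff it vanishes on some allowable $L$, which by the dictionary above is equivalent to having finite support, as witnessed by taking $L = Y_\bullet \setminus T$ where $T$ is the coface-closure of the support of the cochain. Compatibility with the differentials is immediate, so this is an isomorphism of cochain complexes.

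Finally, since filtered colimits of $\Q$-vector spaces are exact and therefore commute with taking cohomology, passing to $H^{d-1}$ yields (11) as an isomorphism. The one nontrivial ingredient is the local finiteness of $\GBT$; the rest is a formal manipulation of the cellular definitions recalled in Section~\ref{sec:BM homology}.
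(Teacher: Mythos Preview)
Your argument is correct and follows essentially the same approach as the paper: both reduce to showing that for any finite set $B$ of simplices one can find an admissible subcomplex $L$ with $B \subset Y_\bullet \setminus L$, using local finiteness of $\GBT$. Your coface-closure construction of $L$ is in fact cleaner than the paper's (which passes through the larger set $\overline{B}$ of simplices sharing a face with $B$), and your explicit framing at the cochain level together with exactness of filtered colimits makes transparent why this cofinality statement suffices, something the paper leaves implicit.
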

\begin{proof}
It suffices to show that, 
given a finite set $B$ of simplices of 
$\GBT$,
there exists a subsimplicial complex
$L \subset \GBT$
such that
\begin{enumerate}
\item 
the cardinality of the set of simplices 
not contained in $L$ is finite, that is,
the cardinality of 
$((\GBT) \setminus L)=\cup_{i \ge 0} ((\GBT)_i \setminus L_i)$
is finite, and
\item $B \subset ((\GBT) \setminus L)$. 
\end{enumerate}
Let us construct such an $L$.
Let $\overline{B}$ denote the set of simplices $\sigma$ 
of $\GBT$ such that 
\begin{itemize}
\item there exists a simplex $\tau \in B$ such that
$\sigma$ and $\tau$ has a face in common.
\end{itemize}
Now we set $L$ to be the set of 
simplices $\sigma \in \GBT$ such that
\begin{itemize}
\item 
$\sigma \notin \overline{B}$, or
\item
there exists a simplex $\tau \in ((\GBT) \setminus \overline{B})$
such that $\sigma$ is a face of $\tau$.
\end{itemize}
Then $L$ has a structure of a subsimplicial complex.
It is easy to see that 
\[
B \subset (\GBT \setminus L) \subset \overline{B},
\]
which implies (2) above. 
Since $\GBT$ is locally finite,
$\overline{B}$ is a finite set,
which implies (1).
 \end{proof}

\subsubsection{}
\label{sec:Gamma isom}
Consider the following diagram:
\begin{equation}
\label{eqn:diagram3}
\xymatrix{
H^{d-1}(\GgBT)
\ar[d]_\cong^\alpha
&
\displaystyle\varinjlim_L
H^{d-1}(\GgBT, |L|)
\ar[l]^\beta
\ar[d]^\alpha
\\
H^{d-1}_\Gamma(\gBTdot)
&
\displaystyle\varinjlim_L
H^{d-1}
(\GgBTEG, \wt{|L|} )
\ar[l]^{\beta\phantom{;lkj;;}}
\\
H^{d-1}_\Gamma(\cgBTdot)
\ar[u]_\alpha^\cong
&
\displaystyle\varinjlim_L
H^{d-1}
(\GgbarBTEG, \wt{|L|} \cup
\Gamma \backslash \partial \overline{\gBTdot} \times |E\Gamma_\bullet|
)
\ar[l]^{\beta\phantom{LLLLLLLLLLLLLL}} 
\ar[d]^\beta 
\ar[u]_\alpha^\cong
\\
&
H^{d-1}
(\GgbarBTEG, 
\Gamma \backslash \partial \overline{\gBTdot} \times |E\Gamma_\bullet|)
\ar[lu]^\beta.
}
\end{equation}
Here $L$ runs over the subsimplicial complexes
as in diagram \eqref{eqn:diagram2},
and $\wt{|L|}$
is the inverse image of $|L|$
by the projection
$\GgBTEG \to \GgBT$.
The maps labeled by 
$\alpha$ are (induced by) pullbacks.
The maps $\beta$ are (induced by)
the forget support maps.
The diagram is commutative.

The second vertical arrow on the right hand column
is an isomorphism by the excision property of cohomology.
\begin{lem}
\label{lem:5.5}
The maps on the left column of the diagram \eqref{eqn:diagram3}
are isomorphisms.
\end{lem}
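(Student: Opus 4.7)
The plan is to establish both pullback maps on the left column of diagram~\eqref{eqn:diagram3} as isomorphisms by separate arguments, each resting on the finiteness of $\Gamma$-stabilizers of simplices (ensured by conditions~(1) and~(3) of Section~\ref{sec:def Gamma}) combined with the contractibility of the relevant total spaces.

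For the lower arrow $\alpha : H^{d-1}_\Gamma(\cgBTdot) \to H^{d-1}_\Gamma(\gBTdot)$, I would use that $\gBTdot$ is contractible (being a Bruhat--Tits building), that $\cgBTdot$ is contractible by Werner's theorem recalled in Section~\ref{Werner's compactification}, and that $|E\Gamma_\bullet|$ is contractible by construction. The products $\gBTdot \times |E\Gamma_\bullet|$ and $\cgBTdot \times |E\Gamma_\bullet|$ are therefore contractible, and the diagonal $\Gamma$-action is free thanks to the free action on $|E\Gamma_\bullet|$. Consequently both quotients are models for $B\Gamma$, and the $\Gamma$-equivariant inclusion induces a weak equivalence of quotients inducing the identity on $\pi_1$, whence an isomorphism in cohomology in every degree.

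For the upper arrow $\alpha : H^{d-1}(\GgBT) \to H^{d-1}_\Gamma(\gBTdot)$, induced by the projection $\Gamma \backslash (\gBTdot \times |E\Gamma_\bullet|) \to \GgBT$, I would invoke the Leray spectral sequence of this projection. The fiber over a point $[x] \in \GgBT$ is homotopy equivalent to $B\,\mathrm{Stab}_\Gamma(x)$; since these stabilizers are finite and coefficients are in $\Q$, the rational cohomology of each fiber is concentrated in degree zero. The spectral sequence thus degenerates at $E_2$, yielding the desired isomorphism.

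The principal obstacle is making the second step rigorous within the cellular framework of Section~\ref{sec:BM homology} for the non-strict simplicial complex $\GBT$, where the topological Leray spectral sequence does not immediately apply. A cleaner alternative would be to construct an explicit double complex arising from the simplicial structure of $\gBTdot \times |E\Gamma_\bullet|$: one spectral sequence computes $H^*_\Gamma(\gBTdot)$, while collapsing in the other direction (using that the rational cellular cochains of each $B\,\mathrm{Stab}_\Gamma(\sigma)$ are quasi-isomorphic to $\Q$ concentrated in degree zero) recovers the cellular cochain complex of $\GgBT$ and hence $H^*(\GgBT)$, giving the isomorphism at the chain level.
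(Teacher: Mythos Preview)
Your proposal is correct and follows essentially the same approach as the paper: both arguments identify all three cohomology groups on the left column with $H^{d-1}(\Gamma,\Q)$, using contractibility of $|\cBT_\bullet|$ and $\overline{|\cBT_\bullet|}$, freeness of the $\Gamma$-action on the products with $|E\Gamma_\bullet|$, and finiteness of stabilizers (from conditions (1) and (3)). The paper's proof is a terse sketch that simply says ``these statements can be proved using spectral sequences, which are compatible with pullbacks''; your Leray/double-complex elaboration is precisely the argument being gestured at.
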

\begin{proof}
As $\BT$ is contractible and $\Gamma$ satisfies (3)
of Section~\ref{sec:def Gamma}
(hence the stabilizer group of a simplex is a finite group
as discussed there),
the group $H^{d-1}(\Gamma \backslash \BT)$
is isomorphic to $H^{d-1}(\Gamma)$.
As $\BT\times |E\Gamma_\bullet|$ and
$\BTbar\times |E\Gamma_\bullet|$ are 
contractible and $\Gamma$ acts freely,
the groups 
$H^{d-1}(\Gamma\backslash \BT\times |E\Gamma_\bullet|)$
and 
$H^{d-1}(\Gamma\backslash \BTbar\times |E\Gamma_\bullet|)$
are also isomorphic to 
 $H^{d-1}(\Gamma)$.
(These statements can be proved using spectral sequences,
which are compatible with pullbacks.)
This implies that the left vertical arrows are isomorphisms.
 \end{proof}

\subsubsection{}
Consider the following diagram.
\begin{equation}
\label{eqn:diagram4}
\xymatrix{
H^{d-1}(\GgBTEG)^*
\ar[r]
&
H^{d-1}(\GgBTEG, \Gamma \backslash \partial \cgBTdot
\times |E\Gamma_\bullet|)^*
\\
H_{d-1}(\GgBTEG)
\ar[u]^\cong
\ar[r]
&
H_{d-1}(\GgBTEG, \Gamma \backslash \partial \cgBTdot
\times |E\Gamma_\bullet|)
\ar[u]
}
\end{equation}
Each of the two vertical arrows in the square 
is the canonical map of the form $A \to A^{**}$.
The lower horizontal arrow is the pushforward map
of  homology.
The top horizontal arrow is the twice dual of the lower horizontal 
arrow, and is the dual of the forget support map of 
cohomology.
\begin{lem}
\label{lem:5.6}
The left vertical map in the diagram \eqref{eqn:diagram4}
is an isomorphism.
\end{lem}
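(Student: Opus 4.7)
The plan is to reduce the claim to the elementary fact that the canonical map $V \to V^{**}$ is an isomorphism for any finite dimensional $\Q$-vector space $V$. Under the universal coefficient theorem, the left vertical arrow in diagram~\eqref{eqn:diagram4} is precisely this canonical map applied to $V = H_{d-1}(\GgBTEG,\Q)$, so it suffices to prove that this homology group is finite dimensional over $\Q$.

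First I would identify $\GgBTEG$ as a model for the classifying space $B\Gamma$. Indeed, $\BT$ is contractible (as is standard for Bruhat-Tits buildings), $|E\Gamma_\bullet|$ is contractible by Section~\ref{Equivariant homology}, and the diagonal $\Gamma$-action on $\BT \times |E\Gamma_\bullet|$ is free because the action on $|E\Gamma_\bullet|$ already is. Therefore $\BT \times |E\Gamma_\bullet|$ is a free, contractible $\Gamma$-space, so $\GgBTEG$ is a $K(\Gamma,1)$, and we obtain natural isomorphisms $H_k(\GgBTEG,\Q) \cong H_k(\Gamma,\Q)$ and $H^k(\GgBTEG,\Q) \cong H^k(\Gamma,\Q)$ for every $k$.

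Next, the universal coefficient theorem over $\Q$ (Section~\ref{univ_coeff}) gives a canonical isomorphism $H^{d-1}(\Gamma,\Q) \cong \Hom_\Q(H_{d-1}(\Gamma,\Q),\Q)$. By Condition~(5) of Section~\ref{sec:def Gamma}, verified for arithmetic subgroups in Lemma~\ref{lem:Harder}, the left hand side is finite dimensional over $\Q$. A standard cardinality argument shows that whenever $V$ is a $\Q$-vector space with $\Hom_\Q(V,\Q)$ finite dimensional, $V$ itself is finite dimensional (the $\Q$-linear dual of an infinite dimensional vector space has uncountable $\Q$-dimension). Thus $H_{d-1}(\GgBTEG,\Q)$ is finite dimensional, and the canonical double-dual map is an isomorphism.

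The only step that needs genuine attention is the assertion that the left vertical arrow of diagram~\eqref{eqn:diagram4} really coincides, modulo the UCT identification $H^{d-1} \cong H_{d-1}^*$, with the canonical map $V \to V^{**}$. This is a formal compatibility between the natural evaluation pairing of homology with cohomology and the UCT isomorphism; once one unwinds the definitions it presents no difficulty, and it is the only portion of the argument that is not pure linear algebra.
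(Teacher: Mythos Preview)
Your proof is correct and follows essentially the same route as the paper's: identify $\GgBTEG$ with a $K(\Gamma,1)$ so that its $(d-1)$-st (co)homology is the group (co)homology of $\Gamma$, then invoke Condition~(5) (Lemma~\ref{lem:Harder}) to obtain finite dimensionality. The paper's own proof compresses this to two lines and leaves the passage ``$H^{d-1}$ finite dimensional $\Rightarrow$ $H_{d-1}\to H_{d-1}^{**}$ is an isomorphism'' implicit; you have spelled it out via the cardinality argument on $\Q$-duals, and you also flag the formal compatibility between the UCT isomorphism and the evaluation pairing, which the paper does not mention.
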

\begin{proof}
As was remarked in the proof of 
Lemma~\ref{lem:5.5}, 
the group $H^{d-1}(\GgBTEG)$
is isomorphic to $H^{d-1}(\Gamma)$.
From property (5) in Section~\ref{sec:def Gamma}, we know that 
$H^{d-1}(\Gamma)$ is a finite dimensional $\Q$-vector 
space. 
This implies the claim.
 \end{proof}
\begin{cor}
\label{cor:5.7}
The map (6) in diagram \eqref{eqn:diagram1}
is an isomorphism.
\end{cor}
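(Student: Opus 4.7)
The plan is to deduce that map (6), which is the canonical double-dualization morphism $A \to A^{**}$ for $A = H_{d-1}(\GBT, \Q)$, is an isomorphism by showing that the $\Q$-vector space $A$ is finite-dimensional; for $\Q$-vector spaces, reflexivity is equivalent to finite-dimensionality.

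The first step is to identify $H_{d-1}(\GBT, \Q)$ with the equivariant homology $H_{d-1}(\GgBTEG, \Q)$. Using the comparison of cellular and singular (co)homology recalled in Section~\ref{sec:cellular}, one has $H_{d-1}(\GBT, \Q) \cong H_{d-1}(\GgBT, \Q)$. The projection $\GgBTEG \to \GgBT$ has contractible fibers $|E\Gamma_\bullet|$; since $\Gamma$ satisfies Condition (3) of Section~\ref{sec:def Gamma}, the stabilizer of each simplex is finite, so the same spectral sequence argument used in the proof of Lemma~\ref{lem:5.5} (this time applied to homology rather than cohomology, using that the orders of the finite stabilizers are invertible in $\Q$) yields an isomorphism $H_{d-1}(\GgBT, \Q) \cong H_{d-1}(\GgBTEG, \Q)$.

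The second step invokes Lemma~\ref{lem:5.6}: the canonical map $H_{d-1}(\GgBTEG, \Q) \to H^{d-1}(\GgBTEG, \Q)^*$ is an isomorphism. Since Lemma~\ref{lem:5.5} identifies $H^{d-1}(\GgBTEG, \Q)$ with $H^{d-1}(\Gamma, \Q)$, and Condition (5) of Section~\ref{sec:def Gamma} states this last group is finite-dimensional, its dual is also finite-dimensional. Chaining these isomorphisms with those of the first step shows that $H_{d-1}(\GBT, \Q)$ is finite-dimensional, whence map (6), being the canonical double-dualization on a finite-dimensional $\Q$-vector space, is an isomorphism.

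The only non-formal step in this outline is the averaging / spectral-sequence argument showing that the Borel construction does not change rational homology when stabilizers are finite; this is the homology analogue of what is already used in Lemma~\ref{lem:5.5}, so it should not present a genuine obstacle, but it is the one place where Condition (3) of Section~\ref{sec:def Gamma} must be brought in. Everything else in the argument is purely formal manipulation of the results already established in Lemmas~\ref{lem:5.5} and~\ref{lem:5.6} together with the universal coefficient theorem over $\Q$.
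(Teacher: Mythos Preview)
Your argument is correct and reaches the same conclusion as the paper, but the route differs slightly. The paper does not establish a direct homology isomorphism $H_{d-1}(\GBT,\Q)\cong H_{d-1}(\GgBTEG,\Q)$; instead it works on the double-dual side. Using (8) it identifies $H_{d-1}(\GBT,\Q)^{**}$ with $H^{d-1}(\GBT,\Q)^*$, then uses (10) and the cohomology isomorphisms already proved in Lemma~\ref{lem:5.5} to reach $H^{d-1}(\GgBTEG,\Q)^*$, which is finite-dimensional by Condition~(5). Since the canonical map $A\to A^{**}$ is always injective, finite-dimensionality of $A^{**}$ forces $A$ to be finite-dimensional, and the map is an isomorphism.

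The advantage of the paper's route is economy: it reuses only the cohomological isomorphisms already in hand and avoids having to state and justify a separate homology version of Lemma~\ref{lem:5.5}. Your route is perfectly valid, but the spectral-sequence step you flag as ``the only non-formal step'' is precisely the extra work the paper sidesteps by passing through $A^{**}$ first. Both arguments ultimately rest on the same input, namely the finite-dimensionality of $H^{d-1}(\Gamma,\Q)$.
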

\begin{proof}
As remarked in the proof of the previous lemma,
$H^{d-1}(\GgBTEG)$ is finite dimensional.
Using the isomorphisms (8) (10)
and the isomorphisms in diagrams \eqref{eqn:diagram3} and
\eqref{eqn:diagram3}, 
we see that $H_{d-1}(\GBT)^{**}$ 
is also finite dimensional.
The claim follows from this.
 \end{proof}

\subsubsection{}
We define the map (2) in diagram 
\eqref{eqn:diagram0} 
to be the composite
\[
\begin{array}{l}
H_{d-1}(\GgBTEG)
\xto{\cong}
H^{d-1}(\GgBTEG)^*
\xleftarrow{\cong}
H^{d-1}(\GgBT)^*
\\
\xleftarrow{\cong}
H^{d-1}(\GBT)^*
\xleftarrow{\cong}
H_{d-1}(\GBT)
\end{array}
\]
where the maps are
the left vertical arrow in the diagram \eqref{eqn:diagram4},
the dual of the composite of 
the left vertical arrows in \eqref{eqn:diagram3},
the dual of (10),
and 
the dual of the composite (8)(6).

We define the map (3) in diagram 
\eqref{eqn:diagram0} to be
the composite 
\[
\begin{array}{l}
H_{d-1}(\GgbarBTEG, \Gamma \backslash \partial \cgBTdot
\times |E\Gamma_\bullet|)
\to
H^{d-1}(\GgbarBTEG, \Gamma \backslash \partial \cgBTdot
\times |E\Gamma_\bullet|)^*
\\
\to
\varinjlim_L
H^{d-1}(\GgBT, |L|)
\to
H_c^{d-1}(\GBT)^*
\to
H_{d-1}^\BM(\GBT)
\end{array}
\]
where the maps are 
the right vertical arrow 
in the diagram \eqref{eqn:diagram4},
the dual of the vertical arrows 
in the diagram \eqref{eqn:diagram3},
the dual of the composite $(11)^{-1}(12)$
and the inverse of (7).

The diagram \eqref{eqn:diagram0} is then commutative
since each of the diagrams 
\eqref{eqn:diagram1},
\eqref{eqn:diagram2},
\eqref{eqn:diagram3},
\eqref{eqn:diagram4}
is commutative.

\subsection{}
Given a basis $v_1,\dots, v_d$ of $V$, 
we may regard $A_\bullet$ as a subsimplicial complex of $\cBT_\bullet$
using the map $\iota_{v_1,\dots, v_d}$.
This simplicial complex is denoted by
$A_{v, \bullet}=A_{v_1,\dots, v_d, \bullet}$.
Let $\overline{|A_{v,\bullet}|}$ 
denote the closure of $|A_{v, \bullet}|$ 
in $\BTbar$ and set
$\partial\overline{|A_{v,\bullet}|}
=\overline{|A_{v,\bullet}|} \setminus |A_{v, \bullet}|$.

Let $\Delta_{d-1}'$ denote the interior of $\Delta_{d-1}$.
Let $\varphi: \Delta_{d-1}' 
\xto{\cong} 
\R^{d}/\R(1,\dots,1)$  
be the homeomorphism given by 
$(t_0,\dots, t_{d-1}) \mapsto (1/t_0,\dots, 1/t_{d-1})$.
Let $n>0$ be an integer.  
We set $\wt{K_n}=\prod_{i=0}^d[0,n] \subseteq \R^d$,
and let $K_n$ denote the image of 
$\wt{K_n}$ in $\R^d/\R(1,\dots, 1)$.
Recall (Section~\ref{sec:def apartment}) 
that the simplices
of $A_\bullet$ are defined using the set of vertices
$A_0=\Z^{\oplus d}/ \Z(1,\dots,1)$.
We regard $A_0 \subset \R^{\oplus d}/ \R(1,\dots,1)$
using the natural inclusion $\Z \subset \R$.
The set of those simplices of $A_\bullet$ 
whose support is contained in the complement
of the interior of
$K_n$ naturally forms a subsimplicial complex of 
$A_\bullet$.  We call this simplicial complex
$K_{n,\bullet}^c$.  It is easy to see that
$\cap_n K_{n,\bullet}^c =\emptyset$.

Consider the following map
\begin{eqnarray}
\label{eqn:sub diagram}\\
\nonumber 
H_{d-1}(\overline{|A_{v, \bullet}|}, \partial\overline{|A_{v,\bullet}|})
\rightarrow
(H^{d-1}(\overline{|A_{v, \bullet}|}, \partial\overline{|A_{v, \bullet}|}))^* 
\rightarrow
(\displaystyle\varinjlim_n 
H^{d-1}(\overline{|A_{v,\bullet}|}, 
\partial\overline{|A_{v, \bullet}|} \cup |K_{n,\bullet}^c|))^*      
\\
\nonumber
\xleftarrow{\cong}
(\displaystyle\varinjlim_n H^{d-1}(|A_{v, \bullet}|, |K_{n,\bullet}^c|))^*  
\xleftarrow{\cong}
(\displaystyle\varinjlim_n H^{d-1}(A_{v, \bullet}, K_{n,\bullet}^c))^*  
\xleftarrow{\cong}
H_c^{d-1}(A_{v,\bullet})^* 
\xleftarrow{\cong}
H_{d-1}^\BM(A_{v,\bullet})    
\end{eqnarray}
where the limit is over the nonnegative integers
in each case.
The first map is the canonical map $A \to A^{**}$
for $A=H_{d-1}(\overline{|A_{v, \bullet}|}, 
\partial\overline{|A_{v,\bullet}|})$.
The second map is the dual of the 
limit of the pullback map at each stage.
The third map is the dual of the limit of the
excision isomorphism of cohomology.
The fourth map is the dual of the limit of
the isomorphisms
between cellular cohomology and 
singular cohomology (see Section~\ref{sec:cellular}).
The fifth map is the map obtained from the 
definitions in Section~\ref{sec:def homology}.
It is an isomorphism since 
$\cap_n K_{n,\bullet}^c =\emptyset$.
The sixth map is the duality isomorphism in the 
universal coefficient theorem (see Section~\ref{univ_coeff}).

Note that the image of the continuous map 
$s(v_1, \dots, v_d)$
of Section~\ref{sec:4 and 5} is contained in 
$\overline{|A_{v, \bullet}|}$
and defines a class $[s(v_1,\dots, v_d)]$ 
in
$H_{d-1}(
\overline{|A_{v, \bullet}|} 
,
\partial
\overline{|A_{v, \bullet}|} 
)$.

\begin{lem}
\label{lem:geom apartment}
The following two elements in
$H^\BM_{d-1}
(A_\bullet)$
coincide:
\begin{enumerate}
\item 
The image of the class of $s(v_1,\dots, v_d)$
by 
\[
H_{d-1}(\overline{|A_{v,\bullet}|}, \partial\overline{|A_{v,\bullet}|})
\to
H_{d-1}^\BM(A_{v, \bullet})
\xto{\iota_{v_1,\dots,v_d}^{-1}}
H_{d-1}^\BM(A_\bullet),
\]
where the first map is the map in the diagram \eqref{eqn:sub diagram}.
\item
The class of $\beta$ of Section~\ref{sec:fundamental class}
in $H_{d-1}^\BM(A_\bullet)$.
\end{enumerate}
\end{lem}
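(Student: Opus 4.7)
The plan is to trace the class $[s(v_1,\ldots,v_d)]$ through the diagram \eqref{eqn:sub diagram} and to identify it with $\beta$ via a local computation on each top-dimensional simplex. The key ingredient is an explicit description of $s(v_1,\ldots,v_d)$ as a parametrization of the geometric apartment.

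First, using Werner's homeomorphism $S \cong \overline{|\cBT_\bullet|}$ together with the homeomorphism $\varphi$ of Section~\ref{sec:apartments}, I would describe $s(v_1,\ldots,v_d)$ geometrically. For $(t_0,\ldots,t_{d-1}) \in \Delta_{d-1}'$ and $f = \sum c_i v_i^* \in V^*$, we have $s(v_1,\ldots,v_d)(t_0,\ldots,t_{d-1})(f) = \sup_i |c_i| q_\infty^{-1/t_i}$, and under Goldman--Iwahori--Werner this norm class corresponds to the point of $\R^{\oplus d}/\R(1,\ldots,1) = |A_{v,\bullet}|$ with coordinates $(-1/t_0,\ldots,-1/t_{d-1})$. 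In particular $s(v_1,\ldots,v_d)$ restricts to a homeomorphism $\Delta_{d-1}' \xto{\cong} |A_{v,\bullet}|$ and sends $\partial\Delta_{d-1}$ into $\partial\overline{|A_{v,\bullet}|}$.

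Second, I would decompose the singular chain represented by $s(v_1,\ldots,v_d)$ simplex by simplex. For $\sigma \in A_{d-1}$ with small lift $\{x_0,\ldots,x_{d-1}\}$ ordered so that $x_0 \lneq \cdots \lneq x_{d-1} \lneq x_0 + (1,\ldots,1)$, the open region of $\Delta_{d-1}'$ mapping onto $|\sigma|$ is cut out by inequalities on the fractional parts of the $-1/t_i$, which pick out a unique $w \in S_d$; this $w$ is precisely the permutation appearing in the definition of $[\wt\sigma]$ in Section~\ref{sec:fundamental class}. Consequently, modulo $\partial\overline{|A_{v,\bullet}|}$, the singular chain $s(v_1,\ldots,v_d)$ is a formal sum over $\sigma \in A_{d-1}$ of affine singular $(d-1)$-simplices, each carrying the orientation class $[\sigma] \in O(\sigma)$.

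Third, I would chase this decomposition through \eqref{eqn:sub diagram}. The cellular-to-singular isomorphisms for $H^*$ and $H^*_c$, the excision isomorphism, and the universal coefficient duality (which over $\Q$ gives $H_{d-1}^\BM(A_\bullet) \cong H_c^{d-1}(A_\bullet)^*$) all commute with the passage from a singular top-dimensional cycle of the pair to its cellular Borel--Moore counterpart obtained by reading off the oriented $(d-1)$-simplex at each $\sigma$. Combined with the previous step, this identifies the image of $[s(v_1,\ldots,v_d)]$ in $H_{d-1}^\BM(A_\bullet)$ with the class of $\wt\beta = ([\sigma])_{\sigma \in A_{d-1}}$ in $(\prod_{\nu \in A_{d-1}'}\Z)_\pmone$, which is $\beta$ by definition.

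The main obstacle is the orientation bookkeeping: the sign flip between $\varphi$ and $s$, the conventional signs in the cellular--singular comparison for $H^*_c$, and the signs built into $[\sigma]$ via the signature character must all align so that no overall minus sign appears. The cleanest check is to verify this once on a standard simplex (for example $x_i = (1^{(i)}, 0^{(d-i)})$, where $w = \mathrm{id}$), and then to propagate the identification across adjacent chambers using the continuity of $s$ together with the consistency of relative orientations guaranteed by formula~\eqref{formula1}.
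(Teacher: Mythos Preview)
Your geometric picture is correct and matches the paper's, but the second step contains a genuine gap. You write that ``modulo $\partial\overline{|A_{v,\bullet}|}$, the singular chain $s(v_1,\ldots,v_d)$ is a formal sum over $\sigma \in A_{d-1}$ of affine singular $(d-1)$-simplices.'' This is not a valid equality of singular chains: $A_{d-1}$ is infinite, and singular chains are finite formal sums. There is no way to subdivide a single singular simplex $\Delta_{d-1}\to \overline{|A_{v,\bullet}|}$ into infinitely many pieces inside the singular chain complex, so your later appeal to ``reading off the oriented $(d-1)$-simplex at each $\sigma$'' is not yet justified. The diagram \eqref{eqn:sub diagram} is built precisely to mediate between the compact relative homology on the left and the infinite Borel--Moore chain on the right; you cannot short-circuit it by an infinite decomposition on the singular side.

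The paper's proof repairs exactly this point. It fixes a finite stage $m$ and a test class $h_m\in H^{d-1}(A_\bullet,K^c_{m,\bullet})$, so that the pairing only sees the compact region $K_m$. The crucial device is a reparametrization $f_m:\Delta_{d-1}\to\Delta_{d-1}$ (built from the homeomorphism $g_m:K_m\to\Delta_{d-1}$) with the property that $s(v_1,\ldots,v_d)\circ f_m$ has image contained in $K_m$, together with a short homotopy lemma showing that $s$ and $s\circ f_m$ define the same class in $H_{d-1}(\overline{|A_{v,\bullet}|},\partial\overline{|A_{v,\bullet}|}\cup|K^c_{m,\bullet}|)$. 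Now the image is covered by \emph{finitely} many top simplices, your simplex-by-simplex decomposition becomes legitimate, and one checks directly that the resulting cellular chain is the truncation of $\wt\beta$ to $K_m$. Letting $m$ vary recovers $\beta$. Your orientation bookkeeping (the sign in $\varphi$ versus $s$, the permutation $w$) is indeed the remaining content, and your proposed check on a standard chamber plus propagation via \eqref{formula1} is a reasonable way to handle it; but the finiteness reduction via $f_m$ is the missing ingredient that makes the argument go through.
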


\begin{proof}
Let us describe the image of the class 
$[s(v_1,\dots, v_d)]$ in 
$[\varinjlim_n
H^{d-1}
(A_\bullet, K_{n,\bullet}^c)]^*$.
Take an element
$h \in 
\varinjlim_n
H^{d-1}
(A_\bullet, K_{n,\bullet}^c).$
We may suppose it is represented by an element $h_m$ of 
$H^{d-1}
(A_\bullet, K_{m,\bullet}^c)$
for some $m$.

Consider the map
\[
H_{d-1}(\overline{|A_\bullet|}, \partial \overline{|A_\bullet|})
\to
H_{d-1}(\overline{|A_\bullet|}, 
\partial \overline{|A_\bullet|} \cup |K_{m, \bullet}^c|)
\xleftarrow{\cong}
H_{d-1}(|A_\bullet|, |K_{m,\bullet}^c|)
\xleftarrow{\cong}
H_{d-1}(A_\bullet, K_{m,\bullet}^c).
\]
Let $t_m$ denote the image of  
$[s(v_1,\dots, v_d)]$ via this map.
Then the pairing of $[s(v_1,\dots, v_d)]$
with the element $h$ is the pairing 
of $h_m$ and $t_m$ under  the canonical pairing
\[
H_{d-1}(A_\bullet, K_{m,\bullet}^c)
\times 
H^{d-1}(A_\bullet, K_{m,\bullet}^c)
\to \Q
\]
between homology and cohomology.

Let us compute $t_m$.
Given $s=(s_0,\dots, s_{d-1}) \in K_m$, 
take a representative 
$\wt{s}=(\wt{s_0},\dots, \wt{s_{d-1}}) 
\in \R^d$ such that
$-m \le s_i \le 0$ 
$(0\le i \le d-1), 
\min_i s_i=-m$. 
We define a map
$g_m:K_m \to \Delta_{d-1}$ 
by 
$s \mapsto (\wt{s_0}/(\wt{s_0}+\cdots+\wt{s_{d-1}}), 
\dots, \wt{s_{d-1}}/(\wt{s_0}+\cdots+\wt{s_{d-1}}))$.
It is well-defined and is a homeomorphism.
Let $f_m: \Delta_{d-1} \to \Delta_{d-1}$ 
be the composite 
$\Delta_{d-1} \xto{g_n^{-1}} 
K_m \subset \R^d/\R(1,\dots,1) 
\xleftarrow{\cong} \Delta_{d-1}' \subset \Delta_{d-1}$.
From the following lemma, it follows that the class of 
$s(v_1,\dots, v_d)$
equals the class of 
$s(v_1,\dots,v_d)\circ f_m$
in $H_{d-1}(\overline{|A_\bullet|}, 
\partial\overline{|A_\bullet|} \cup |K_{m,\bullet}^c|).$
Note that $s(v_1,\dots, v_d) \circ f_m$ 
defines a class in $H_{d-1}(|A_\bullet|,|K_{m,\bullet}^c|)$,
hence this is the image of $[s(v_1,\dots, s_d)]$.

It is easy to check that 
the class of 
$s(v_1,\dots,v_d)\circ f_m$ is then represented by the chain
$(\gamma_\nu)\in \prod_{\nu \in A_{d-1}'}\Z$
where $\gamma_\nu=\beta_\nu$ for $\nu \in K_m'$
($\beta_\nu$ was defined in Section~\ref{sec:fundamental class}),
and $\gamma_\nu=0$ for $\nu \notin K_m'$.
 \end{proof}

\begin{lem}
Let $X$ be a topological space and $Y$ a subspace.
Let $n \ge 1$.
Let $\alpha:\Delta_r \to X$ be a continuous map such that $\alpha(\overline{(\Delta_r\setminus \mathrm{Im} f_n)})\subset Y$.
Then $\alpha$ and $\alpha\circ f_n: \Delta_r \to X$ both define the same element in $H_r(X,Y;\Z)$.
\end{lem}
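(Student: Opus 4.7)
The plan is to reduce, by functoriality, the asserted equality in $H_r(X, Y; \Z)$ to an equality of two relative classes in $H_r(\Delta_r, \overline{\Delta_r \setminus \mathrm{Im}\, f_n}; \Z)$, and then establish the latter by a chain-level homotopy constructed from a suitable continuous deformation of $\Delta_r$.

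By hypothesis, $\alpha$ is a continuous map of pairs $(\Delta_r, \overline{\Delta_r \setminus \mathrm{Im}\, f_n}) \to (X, Y)$, so $\alpha_*$ is defined on relative singular homology. The singular $r$-simplices $\mathrm{id}_{\Delta_r}$ and $f_n$ both have their face-boundaries in $\overline{\Delta_r \setminus \mathrm{Im}\, f_n}$ (trivially for $\mathrm{id}$, and via $f_n(\partial \Delta_r) = \partial(\mathrm{Im}\, f_n) \subseteq \overline{\Delta_r \setminus \mathrm{Im}\, f_n}$ for $f_n$), and hence define classes $[\mathrm{id}], [f_n] \in H_r(\Delta_r, \overline{\Delta_r \setminus \mathrm{Im}\, f_n}; \Z)$ whose $\alpha_*$-images in $H_r(X, Y; \Z)$ are exactly the classes of $\alpha$ and $\alpha \circ f_n$. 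The lemma thus reduces to showing $[\mathrm{id}] = [f_n]$ in that relative group.

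To establish this, I would produce a continuous map $H: \Delta_r \times [0,1] \to \Delta_r$ with $H_0 = \mathrm{id}_{\Delta_r}$, $H_1 = f_n$, and crucially $H(\partial \Delta_r \times [0,1]) \subseteq \overline{\Delta_r \setminus \mathrm{Im}\, f_n}$. Feeding this into the standard singular prism operator $P$ and post-composing with $H_*$ applied to the identity chain $\mathrm{id}_{\Delta_r} \in C_r(\Delta_r)$, the chain-homotopy identity $\partial P + P \partial = (H_1)_* - (H_0)_*$ yields a chain in $C_{r+1}(\Delta_r)$ whose boundary equals $f_n - \mathrm{id}_{\Delta_r} - P(\partial \mathrm{id}_{\Delta_r})$, with the remainder $P(\partial \mathrm{id}_{\Delta_r})$ supported on $H(\partial \Delta_r \times [0,1]) \subseteq \overline{\Delta_r \setminus \mathrm{Im}\, f_n}$. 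This exhibits $\mathrm{id}_{\Delta_r} - f_n$ as a boundary modulo chains in $\overline{\Delta_r \setminus \mathrm{Im}\, f_n}$, giving $[\mathrm{id}] = [f_n]$.

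The main technical obstacle is constructing the homotopy $H$ satisfying the boundary constraint. The naive straight-line interpolation $(x, t) \mapsto (1-t) x + t f_n(x)$ is insufficient because a line segment from $x \in \partial \Delta_r$ to $f_n(x) \in \partial(\mathrm{Im}\, f_n)$ may pass through the interior of $\mathrm{Im}\, f_n$. The remedy exploits the collar structure of $\overline{\Delta_r \setminus \mathrm{Im}\, f_n}$: since $\mathrm{Im}\, f_n$ is a compact, tamely embedded copy of $\Delta_r$ inside the interior of $\Delta_r$ (via the explicit composition through $\varphi$ and $g_n$), this closed complement is homeomorphic to $\partial \Delta_r \times [0,1]$, identifying $\partial \Delta_r \times \{0\}$ with $\partial \Delta_r$ and $\partial \Delta_r \times \{1\}$ with $\partial(\mathrm{Im}\, f_n)$ via $f_n|_{\partial \Delta_r}$. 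One defines $H$ on $\partial \Delta_r \times [0,1]$ using this collar coordinate, then extends into the interior of $\Delta_r$ (for example, by coning through a point in the interior of $\mathrm{Im}\, f_n$) to obtain the required homotopy.
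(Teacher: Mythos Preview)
The paper's own proof of this lemma is simply ``Omitted.'' Your proposal is correct and supplies exactly the kind of elementary homotopy argument the authors evidently considered routine: reduce via $\alpha_*$ to the universal case $(\Delta_r,\overline{\Delta_r\setminus\mathrm{Im}\,f_n})$, then exhibit $[\mathrm{id}]=[f_n]$ there by a prism-operator computation driven by a homotopy $H$ that keeps $\partial\Delta_r\times[0,1]$ inside the closed complement of $\mathrm{Im}\,f_n$. Your observation that the straight-line homotopy need not satisfy this boundary constraint, and your remedy via the collar structure of the annular region $\overline{\Delta_r\setminus\mathrm{Im}\,f_n}\cong\partial\Delta_r\times[0,1]$, are both on point; the specific $f_n$ in the paper is a tame embedding of $\Delta_r$ into its own interior, so the collar exists.

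One slightly shorter alternative, should you want it: since $f_n$ is an orientation-preserving homeomorphism onto a disk in the interior of $\Delta_r$, excision gives $H_r(\Delta_r,\overline{\Delta_r\setminus\mathrm{Im}\,f_n};\Z)\cong H_r(\mathrm{Im}\,f_n,\partial\,\mathrm{Im}\,f_n;\Z)\cong\Z$, and both $[\mathrm{id}]$ and $[f_n]$ map to the positive generator under this identification (a local-degree check). This avoids constructing $H$ explicitly but trades it for an orientation verification. Either route is adequate for what the paper needs.
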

\begin{proof}
Omitted.
 \end{proof}

\subsection{Proof of Theorem \ref{lem:apartment}}
\begin{proof}
By Lemma~\ref{lem:geom apartment},
it suffices to show that 
\begin{enumerate}
\item
the image of the class of 
$[A_{v_1,\dots, v_d;g_0,\dots, g_d}]$
in $H^\BM_{d-1}(\Gamma \backslash \cBT_\bullet)$,
and
\item
the image of the class of $s(v_1,\dots, v_d)$
via the composite map
\[
H_{d-1}(\overline{|A_{v,\bullet}|}, 
\partial \overline{|A_{v,\bullet}|})
\to
H_{d-1}^\BM(A_{v,\bullet})
\to
H_{d-1}^\BM
(\Gamma \backslash \cBT_\bullet),
\]
where the first map is the 
map in the diagram \eqref{eqn:sub diagram},
\end{enumerate}
coincide.

The argument is similar to that in 
the proof of Lemma~\ref{lem:geom apartment}.
We compare the two classes in 
\[
[\varinjlim_L H^{d-1}(\GgBT,|L|)]^*
\]
which appeared in the definition of the map (3) 
in diagram \eqref{eqn:diagram0}.
Let $h$ be an element in 
$H^{d-1}(\GgBT, |L|)$.
We need to compute the images of the two classes in
\[
H_{d-1}(\GgBT, |L|)
\]
Since the class (1) is represented by 
the chain
$s(v_1, \dots, v_d) \times 
[g_0, \dots, g_{d-1}]$,
using the argument as in the proof of Lemma~\ref{lem:geom apartment},
we see that it is represented by the chain
$s(v_1, \dots, v_d)\circ f_n$
for sufficiently large $n$ 
in $H_{d-1}(\GgBT, |L|)$.
Again, as we have seen in the proof of Lemma~\ref{lem:geom apartment},
this is nothing but the class of (2).
 \end{proof}

\section{The homology of an arithmetic quotient}
\label{section8}
In this section, we compute the homology groups and 
the Borel-Moore homology groups 
of some arithmetic quotients of
the Bruhat-Tits building and relate them to the space 
of automorphic forms.
The aim of this section is to prove 
Proposition \ref{7_prop1} below.

\subsection{Identification of homology groups and the space of automorphic forms}
\label{subsec:X_K}
\ 

For an open compact subgroup
$\bK \subset \GL_d(\A^\infty)$,
we let 
$\wt{X}_{\GL_d,\bK,\bullet}$ denote the disjoint union
$\wt{X}_{\GL_d,\bK,\bullet}=(\GL_d(\A^\infty)/\bK) 
\times \cBT_{\bullet}$
of copies of the Bruhat-Tits building $\cBT_{\bullet}$ 
indexed by $\GL_d(\A^\infty)/\bK$.
We often omit the subscript $\GL_d$ on
$\wt{X}_{\GL_d,\bK,\bullet}$ when there is no
fear of confusion. The group $\GL_d(\A)$ acts on
the simplicial complex $\wt{X}_{\bK,\bullet}$ from the left.
We study the quotient $\GL_d(F) \backslash \wt{X}_{\bK,\bullet}$ of 
$\wt{X}_{\bK,\bullet}$ by the subgroup
$\GL_d(F) \subset \GL_d(\A)$.

For $0\le i \le d-1$, we let $X_{\bK,i}= X_{\GL_d,\bK,i}$ 
denote the quotient $X_{\bK,i} = \GL_d(F) \backslash 
\wt{X}_{\GL_d,\bK,i}$.
We set $J_\bK =\GL_d(F) \backslash \GL_d(\A^\infty)/\bK$.
For each $j \in J_\bK$, we choose an element 
$g_j \in \GL_d(\A^\infty)$ in the double coset $j$
and set $\Gamma_j = \GL_d(F) \cap g_j \bK g_j^{-1}$.
Then the set $X_{\bK,i}$ is isomorphic to the disjoint union
$\coprod_j \Gamma_j \backslash \cBT_i$. For each $j$,
the group $\Gamma_j \subset \GL_d(F)$ is an arithmetic 
group as defined in Section~\ref{sec:71}.  It follows that the tuple
$X_{\bK,\bullet}=(X_{\bK,i})_{0\le i \le d-1}$ forms a simplicial complex 
which is isomorphic to the disjoint union 
$\coprod_{j\in J_\bK} \Gamma_j \backslash \cBT_\bullet$.

Since the simplicial complex $\wt{X}_{\GL_d,\bK,\bullet}$ is locally
finite, it follows that the simplicial complex 
$X_{\bK,\bullet}$ is locally finite.
Hence for an abelian group $M$, we may consider the
cohomology groups with compact support $H_c^{*}(X_{\bK,\bullet},M)$
(\resp the Borel-Moore homology groups $H_*^\mathrm{BM}(X_{\bK,\bullet},M)$) 
of the simplicial complex $X_{\bK,\bullet}$.
%

Since the simplicial complex $X_{\bK,\bullet}$
has no $i$-simplex for $i \ge d$ as was remarked in
Section~\ref{sec:dimension}, 
it follows that
the map
$$
H_{d-1}(X_{\bK,\bullet},M) \to 
H_{d-1}^\mathrm{BM}(X_{\bK,\bullet},M)
$$
is injective for any abelian group $M$.
We regard $H_{d-1}(X_{\bK,\bullet},M)$ as a subgroup of
$H_{d-1}^\mathrm{BM}(X_{\bK,\bullet},M)$.

\subsubsection{}
Let $\St_d$ denote the Steinberg representation
as defined, for example, in \cite[p.193]{Laumon1}.
It is defined with coefficients in $\C$,
but it can also be defined with coefficients in $\Q$
in a similar manner.  We let $\St_d$ denote the
corresponding representation.

\begin{lem}\label{lem:Steinberg}
For a $\Q$-vector space $M$, there is a canonical,
$\GL_d(F_\infty)$-equivariant isomorphism 
between the module of $M$-valued 
harmonic $(d-1)$-cochains and the module
$\Hom_{\Q}(\St_d,M)$.
\end{lem}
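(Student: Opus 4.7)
The plan is to use a cellular presentation of the Steinberg representation on the Bruhat--Tits building and then dualize.

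First I would identify $\St_d$, as a $\Q[\GL_d(F_\infty)]$-module, with $\mathrm{coker}(\Psi)$, where
\[
\Psi : \Bigl(\bigoplus_{\nu' \in \cBT_{d-2}'} \Q\Bigr)_{\pmone} \longrightarrow \Bigl(\bigoplus_{\nu \in \cBT_{d-1}'} \Q\Bigr)_{\pmone}
\]
sends the class of $\nu' \in O(\tau)$, for $\tau \in \cBT_{d-2}$, to $\sum_{(v,\sigma,\nu)} [\nu]$, indexed by the same triples as in formula~\eqref{eqn:boundary} (namely $v \in \cBT_0 \setminus V(\tau)$, $\sigma \in \cBT_{d-1}$, and $\nu \in O(\sigma)$ with $V(\sigma) = V(\tau) \amalg \{v\}$ and $s_v(\nu) = \nu'$). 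Equivalently, $\Psi$ is the coboundary in the compactly supported cellular cochain complex of $\cBT_\bullet$, so that $\mathrm{coker}(\Psi) = H_c^{d-1}(|\cBT_\bullet|, \Q)$. The identification $H_c^{d-1}(|\cBT_\bullet|, \Q) \cong \St_d$ of $\GL_d(F_\infty)$-representations is classical: one extracts it from the parabolic-induction definition of \cite[p.193]{Laumon1} via the Bruhat decomposition, or from Borel's theorem on Iwahori-fixed vectors. The construction is algebraic, so descends from $\C$ to $\Q$.

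Next I would apply $\Hom_\Q(-, M)$ to the presentation
\[
\Bigl(\bigoplus_{\nu' \in \cBT_{d-2}'} \Q\Bigr)_{\pmone} \xrightarrow{\Psi} \Bigl(\bigoplus_{\nu \in \cBT_{d-1}'} \Q\Bigr)_{\pmone} \twoheadrightarrow \St_d,
\]
which yields the left-exact sequence
\[
0 \to \Hom_\Q(\St_d, M) \to \Map_{\pmone}(\cBT_{d-1}', M) \xrightarrow{\Psi^\vee} \Map_{\pmone}(\cBT_{d-2}', M).
\]
Unwinding the definition of $\Psi^\vee$ gives $(\Psi^\vee c)(\nu') = \sum_{(v,\sigma,\nu)} c(\nu)$, so the kernel of $\Psi^\vee$ consists of exactly those $\pmone$-equivariant maps $c : \cBT_{d-1}' \to M$ for which this sum vanishes for every $\tau \in \cBT_{d-2}$ and $\nu' \in O(\tau)$. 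This is precisely the harmonicity condition defining an $M$-valued harmonic $(d-1)$-cochain.

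Finally, both the cellular presentation of $\St_d$ and the identification of $\ker(\Psi^\vee)$ with harmonic cochains are constructed entirely from the natural simplicial action of $\GL_d(F_\infty)$ on $\cBT_\bullet$ and are functorial in $M$, so the resulting bijection is automatically $\GL_d(F_\infty)$-equivariant. The main obstacle I expect is the first step: matching the parabolic-induction definition of $\St_d$ in \cite[p.193]{Laumon1} with the cellular cokernel above. This reconciliation is classical (going back to Borel--Serre and Solomon--Tits) but requires careful bookkeeping of signs, orientations, and possible modulus twists, plus a check that the comparison map is already defined over $\Q$ rather than just over $\C$. Once this is granted, the remaining two steps are essentially formal.
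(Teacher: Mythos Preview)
Your proposal is correct and follows essentially the same route as the paper: both identify $\St_d$ with $H_c^{d-1}(\cBT_\bullet,\Q)$ (the paper cites \cite[6.2,6.4]{Borel} directly, while you unpack this as the cokernel of the top cellular coboundary) and then dualize to obtain harmonic cochains as $\Hom_\Q(\St_d,M)$. The paper compresses your second and third steps into the single remark that, by definition, the space of $M$-valued harmonic $(d-1)$-cochains is $\Hom(H_c^{d-1}(\cBT_\bullet,\Q),M)$, and it likewise notes that Borel's isomorphism descends from $\C$ to $\Q$.
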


\begin{proof}
By definition, the module of $M$-valued 
harmonic $(d-1)$-cochains is identified with
$\Hom(H^{d-1}_c(\cBT_\bullet,\Q),M)$.
It is shown in~\cite[6.2,6.4]{Borel} that
$\St_d$ (with $\C$-coefficient) 
is canonically isomorphic to
$H^{d-1}_c(\cBT_\bullet,\C)$ as a 
representation of $\GL_d(F_\infty)$.
One can check that 
this map is defined over $\Q$. 
This proves the claim.
\end{proof}

\subsubsection{}
\label{sec:6.1.2}
We let $\cBT_{j,*}$ 
denote the quotient
$\wt{\cBT}_j/F_{\infty}^{\times}$.
This set 
is identified with the set of 
pairs $(\sigma, v)$ with 
$\sigma \in \cBT_j$
and $v \in \cBT_0$ a vertex of $\sigma$, 
which we call 
a pointed $j$-simplex.
Here the element 
$(L_i)_{i\in \Z} \mod K^\times$
of $\wt{\cBT_j}/K^\times$
corresponds to the pair
$((L_i)_{i\in \Z}, L_0)$
via this identification.

We identify the set
$\wt{\cBT}_{0}$ with the coset
$\GL_d(K)/\GL_d(\cO)$
by associating to an element $g \in
\GL_d(K)/\GL_d(\cO)$ the lattice
$\cO_{V}g^{-1}$.
Let 
$\cI=\{(a_{ij})\in \GL_d(\cO) \,|\,
a_{ij}\,\mathrm{mod}\, \varpi =0 \ \text{if}\ i>j\}$
be the Iwahori subgroup.
Similarly, we identify the set $\wt{\cBT}_{d-1}$
with the coset $\GL_d(K)/\cI$
by associating
to an element $g\in \GL_d(K)/\cI$
the chain of lattices $(L_i)_{i \in \Z}$ characterized by
$L_i=\cO_{V}\Pi_ig^{-1}$ for $i=0,\dots,d$.
Here, for $i=0,\dots,d$,
we let $\Pi_i$ denote the diagonal $d\times d$ matrix
$\Pi_i=\diag(\varpi,\ldots,\varpi,1,\ldots,1)$
with $\varpi$ appearing $i$
times and $1$ appearing $d-i$ times.

Let $M$ be a $\Q$-vector space.
Let $\cC^\bK(M)$ denote the 
($\Q$-vector)  space of locally constant 
$M$-valued functions on
$\GL_d(F)\backslash \GL_d(\A)/(\bK \times F_\infty^\times)$.
Let $\cC_c^\bK(M) \subset C^\bK(M)$
denote the subspace of compactly supported functions.
\begin{lem}\label{lem:Steinberg8}
\begin{enumerate}
\item There is a canonical isomorphism
$$
H_{d-1}^\mathrm{BM}(X_{\bK,\bullet},M) 
\cong \Hom_{\GL_d(F_\infty)} (\St_d, \cC^\bK(M)),
$$
where $\cC^\bK(M)$ 
denotes the space of locally constant $M$-valued
functions on $\GL_d(F)\backslash \GL_d(\A)/(\bK\times F_\infty^\times)$.
\item 
Let $v \in \St_d^\cI$ be a non-zero 
Iwahori-spherical vector. 
Then the image of the evaluation map
$$
\begin{array}{l}
\Hom_{\GL_d(F_\infty)}(\St_d, \cC^\bK(M)) \\
\to \Map(\GL_d(F)\backslash \GL_d(\A)/(\bK \times 
F_\infty^\times \cI),M)
\end{array}
$$
at $v$ is identified with the image of
the map
$$
\begin{array}{rl}
H_{d-1}^\mathrm{BM}(X_{\bK,\bullet},M) 
 & \to \Map(\GL_d(F)\backslash 
(\GL_d(\A^\infty)/\bK \times \cBT_{d-1,*}),M) \\
& \cong \Map(\GL_d(F)\backslash \GL_d(\A)/(\bK \times 
F_\infty^\times \cI),M).
\end{array}
$$
\end{enumerate}
\end{lem}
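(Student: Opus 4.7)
The plan is to establish (1) via a chain of canonical identifications and then derive (2) by tracing that chain through the specific vector $v$.

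Since $X_{\bK,\bullet}$ is $(d-1)$-dimensional (Section~\ref{sec:dimension}), the compactly supported cochain complex vanishes in degrees $\geq d$, so the universal coefficient exact sequence of Section~\ref{univ_coeff} collapses to a canonical isomorphism $H_{d-1}^\BM(X_{\bK,\bullet},M)\cong\Hom_\Z(H_c^{d-1}(X_{\bK,\bullet},\Z),M)$. For the building itself, combining this with the isomorphism $H_c^{d-1}(\cBT_\bullet,\Q)\cong \St_d$ (used already in the proof of Lemma~\ref{lem:Steinberg}) gives $H_{d-1}^\BM(\cBT_\bullet,M)\cong\Hom_\Q(\St_d,M)$. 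Next, using the decomposition $X_{\bK,\bullet}\cong\coprod_{j\in J_\bK}\Gamma_j\bsl\cBT_\bullet$ and the fact that Borel-Moore chains in top dimension form a product over simplices (so disjoint unions become products), one obtains $H_{d-1}^\BM(X_{\bK,\bullet},M)\cong\prod_j H_{d-1}^\BM(\Gamma_j\bsl\cBT_\bullet,M)$. Condition~(2) on $\Gamma$ together with Lemma~\ref{lem:stabilizer} ensures that no $\gamma\in\Gamma_j$ non-trivially permutes vertices of a simplex it preserves, so orientations are stable and $H_{d-1}^\BM(\Gamma_j\bsl\cBT_\bullet,M)\cong H_{d-1}^\BM(\cBT_\bullet,M)^{\Gamma_j}\cong\Hom_{\Gamma_j}(\St_d,M)$.

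Finally, a rearrangement using the double coset decomposition $J_\bK=\GL_d(F)\bsl\GL_d(\A^\infty)/\bK$ together with the natural splitting $\GL_d(\A)\cong\GL_d(\A^\infty)\times\GL_d(F_\infty)$ yields $\prod_j\Hom_{\Gamma_j}(\St_d,M)\cong\Hom_{\GL_d(F_\infty)}(\St_d,\cC^\bK(M))$, where $\GL_d(F_\infty)$ acts on $\cC^\bK(M)$ by right translation on the archimedean factor; the locally constant condition on $\cC^\bK(M)$ is automatic for images of equivariant maps because $\St_d$ is a smooth representation. This completes~(1).

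For (2), I would use the classical fact that $\St_d^\cI$ is one-dimensional and that a non-zero generator $v$ corresponds, under the isomorphism $\St_d\cong H_c^{d-1}(\cBT_\bullet,\Q)$, to the class of a compactly supported cocycle concentrated (up to a scalar and coboundary) at the standard chamber, namely the pointed $(d-1)$-simplex whose stabilizer modulo center is $\cI$. The identification $\GL_d(K)/(K^\times\cI)\cong\cBT_{d-1,*}$ from Section~\ref{sec:6.1.2} produces the natural bijection $\GL_d(F)\bsl\GL_d(\A)/(\bK\times F_\infty^\times\cI)\cong\GL_d(F)\bsl(\GL_d(\A^\infty)/\bK\times\cBT_{d-1,*})$. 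Tracing the chain of identifications of (1) through this specific vector, the map $\phi\mapsto\phi(v)$ sends $\phi$ to the function whose value at a coset is precisely the evaluation of the corresponding Borel-Moore cycle at the associated pointed $(d-1)$-simplex; $\GL_d(F_\infty)$-equivariance of $\phi$ translates to compatibility with the $\GL_d(F_\infty)$-action on $\cBT_{d-1,*}$ as the base point varies, and $\GL_d(F)$-invariance reduces this to the arithmetic quotient.

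The main technical obstacle is the careful bookkeeping of orientations, specifically verifying that the chosen generator of $\St_d^\cI$ matches the convention by which a pointed simplex induces an element of the orientation torsor $O(\sigma)$ used to evaluate Borel-Moore chains in Section~\ref{sec:def homology}. This is a sign-tracking exercise, but must be performed consistently across the several isomorphisms composing (1) in order to identify the two maps in (2) on the nose rather than up to an overall scalar.
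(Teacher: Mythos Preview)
Your proposal is correct, and in fact it supplies considerably more detail than the paper itself, which simply defers to \cite[Section~5.2.3 and Corollary~5.7]{KY:Zeta elements} for the $\C$-coefficient case and remarks that the same argument works over $\Q$. Your approach---universal coefficients in top degree, the identification $H_c^{d-1}(\cBT_\bullet,\Q)\cong\St_d$ from Lemma~\ref{lem:Steinberg}, the passage to $\Gamma_j$-invariants via Lemma~\ref{lem:stabilizer}, and Frobenius reciprocity to assemble the pieces over $J_\bK$---is exactly the natural route and is almost certainly what the cited paper does. One minor remark: since $J_\bK$ is finite (as noted in the introduction), your observation that Borel-Moore homology takes disjoint unions to products rather than sums is correct but not actually load-bearing here. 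Your acknowledgment that the orientation bookkeeping in~(2) needs care is apt; that is indeed where the work lies, but your outline of how the Iwahori-fixed vector corresponds to the standard pointed chamber under $\GL_d(K)/(K^\times\cI)\cong\cBT_{d-1,*}$ is the right picture.
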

\begin{proof}
For a $\C$-vector space $M$,
(1) is proved in \cite[Section 5.2.3]{KY:Zeta elements},
and (2) is \cite[Corollary 5.7]{KY:Zeta elements}.
The proofs and the argument in loc. cit. 
work for a $\Q$-vector space $M$ as well.
\end{proof}

\begin{cor}
\label{cor:Steinberg8}
Under the isomorphism in (1), the subspace
$$
H_{d-1}(X_{\bK,\bullet},M) \subset 
H_{d-1}^\mathrm{BM}(X_{\bK,\bullet},M)
$$
corresponds to the subspace 
$$
\Hom_{\GL_d(F_\infty)}(\St_d, \cC_c^\bK(M))
\subset 
\Hom_{\GL_d(F_\infty)}(\St_d, \cC^\bK(M)).
$$
\end{cor}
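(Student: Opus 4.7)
The plan is to deduce this corollary directly from Lemma~\ref{lem:Steinberg8}(2) by matching ``finite support'' conditions on both sides of the isomorphism of Lemma~\ref{lem:Steinberg8}(1).

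First I would note that the target set $\GL_d(F)\backslash \GL_d(\A)/(\bK \times F_\infty^\times \cI)$ is discrete, since $\bK \times F_\infty^\times \cI$ is an open subgroup of $\GL_d(\A)$. On a discrete set, a function is compactly supported precisely when it has finite support. Moreover the Iwahori-fixed subspace $\St_d^{\cI}$ is one-dimensional and $\St_d$ is irreducible as a smooth $\GL_d(F_\infty)$-representation, so the non-zero vector $v$ generates $\St_d$, making the evaluation-at-$v$ map injective. Hence a homomorphism $\phi \in \Hom_{\GL_d(F_\infty)}(\St_d, \cC^\bK(M))$ takes values in $\cC_c^\bK(M)$ if and only if $\phi(v)$ has finite support on this discrete set.

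On the Borel-Moore side, I would verify that a class in $H_{d-1}^{\BM}(X_{\bK,\bullet}, M)$ lies in the subspace $H_{d-1}(X_{\bK,\bullet}, M)$ precisely when its image function on the set of pointed $(d-1)$-simplices has finite support. Because the natural forgetful map from pointed simplices to simplices is $d$-to-one (each top-dimensional simplex carries exactly $d$ pointings), the support of a Borel-Moore cycle on the set of simplices is finite if and only if the induced support on pointed simplices is finite. The map of Lemma~\ref{lem:Steinberg8}(2) from $H_{d-1}^{\BM}(X_{\bK,\bullet}, M)$ to functions on pointed simplices is essentially evaluation of the cycle at each pointing (up to the sign conventions governing orientations), so finite support is preserved in both directions on its image. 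Combining this with the identification of images asserted in Lemma~\ref{lem:Steinberg8}(2) closes the argument: the two finite-support conditions correspond under the identification, yielding the claimed correspondence.

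The one subtle point worth verifying carefully is the claim in the second paragraph: that the map $H_{d-1}^{\BM}(X_{\bK,\bullet}, M) \to \Map(\text{pointed simplices}, M)$ of Lemma~\ref{lem:Steinberg8}(2) really does reflect and preserve finite support on the underlying simplices. This should follow immediately from the chain-level construction of that map as given in \cite[Section 5.2.3]{KY:Zeta elements}, but it is the only place where the argument engages with the geometry of the Bruhat-Tits building rather than being purely formal; everything else is bookkeeping around Lemma~\ref{lem:Steinberg8}.
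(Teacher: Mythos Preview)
Your proposal is correct and follows essentially the same route as the paper. The paper's own proof is a single sentence (``This follows from Lemma~\ref{lem:Steinberg8}(2) and the definition of the homology group $H_{d-1}(X_{\bK,\bullet},M)$''), and your argument is precisely the unpacking of that sentence: matching the finite-support condition defining $H_{d-1}$ inside $H_{d-1}^{\BM}$ with the compact-support condition defining $\cC_c^\bK(M)$ inside $\cC^\bK(M)$, via the evaluation map of Lemma~\ref{lem:Steinberg8}(2).
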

\begin{proof}
This follows from Lemma~\ref{lem:Steinberg8} (2)
and the definition of the homology group
$H_{d-1}(X_{\bK, \bullet}, M)$.
\end{proof}

\subsection{Pull-back maps for homology groups}
\ 

Let $\bK,\bK' \subset
\GL_d(\A^\infty)$ be open compact subgroups 
with $\bK' \subset \bK$. We denote by 
$f_{\bK',\bK}$ the natural projection map
$X_{\bK',i} \to X_{\bK,i}$.
Since $\bK'$ is a subgroup of
$\bK$ of finite index, it follows that 
for any $i$ with $0 \le i \le d-1$ 
and for any $i$-simplex $\sigma \in X_{\bK,i}$, 
the inverse image of $\sigma$ under the
map $f_{\bK',\bK}$ is a finite set.
Let $i$ be an integer with $0 \le i \le d-1$
and let $\sigma' \in X_{\bK',i}$. 
Let $\sigma$ denote the image of $\sigma'$
under the map $f_{\bK',\bK}$.
Let us choose an $i$-simplex $\wt{\sigma}'$ of
$\wt{X}_{\bK',\bullet}$ which is sent to $\sigma'$
under the projection map $\wt{X}_{\bK',\bullet}
\to X_{\bK',\bullet}$.
Let $\wt{\sigma}$ denote the image of $\wt{\sigma}'$
under the map $\wt{X}_{\bK',i} \to \wt{X}_{\bK,i}$.
We let
$$
\Gamma_{\wt{\sigma}'} = \{ \gamma \in \GL_d(F)\ |\ 
\gamma \wt{\sigma}' =\wt{\sigma}' \}
$$
and
$$
\Gamma_{\wt{\sigma}} = \{ \gamma \in \GL_d(F)\ |\ 
\gamma \wt{\sigma} =\wt{\sigma} \}
$$
denote the isotropy group of $\wt{\sigma}'$ and
$\wt{\sigma}$, respectively.

The following lemma can be checked easily.
\begin{lem}
Let the notation be as above.
\begin{enumerate}
\item The group $\Gamma_{\wt{\sigma}}$ is a finite group and
the group $\Gamma_{\wt{\sigma}'}$ is a subgroup of $\Gamma_{\wt{\sigma}}$.
\item The isomorphism class of the group $\Gamma_{\wt{\sigma}'}$
(\resp $\Gamma_{\wt{\sigma}}$) depends only on $\sigma'$
(\resp $\sigma$) and does not depends on the choice of
$\wt{\sigma}'$. 
\end{enumerate}
 
\end{lem}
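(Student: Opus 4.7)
The plan is to translate everything into the explicit coordinate form of $\wt{X}_{\bK,i} = (\GL_d(\A^\infty)/\bK) \times \cBT_i$ and reduce the finiteness claim to the interaction between arithmeticity and the compactness of a lattice stabilizer. First I would choose a lift $\wt{\sigma} = (g\bK, \tau)$ with $g \in \GL_d(\A^\infty)$ and $\tau \in \cBT_i$; since $\wt{\sigma}'$ maps to $\wt{\sigma}$ under the projection $\wt{X}_{\bK',\bullet} \to \wt{X}_{\bK,\bullet}$, one may arrange $\wt{\sigma}' = (g\bK', \tau)$ for the same $g$ and $\tau$. Unwinding the $\GL_d(F)$-action gives
\[
\Gamma_{\wt{\sigma}} = (\GL_d(F) \cap g\bK g^{-1}) \cap \mathrm{Stab}_{\GL_d(K)}(\tau),
\]
and analogously for $\Gamma_{\wt{\sigma}'}$ with $\bK'$ in place of $\bK$. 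The containment $\bK' \subset \bK$ then yields $\Gamma_{\wt{\sigma}'} \subset \Gamma_{\wt{\sigma}}$ immediately, which is the second assertion of part (1).

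The substantive step is the finiteness of $\Gamma_{\wt{\sigma}}$. By the definition in Section~\ref{sec:4.1.1} the group $\GL_d(F) \cap g\bK g^{-1}$ is an arithmetic subgroup of $\GL_d(K)$, so by the discussion following Section~\ref{sec:def Gamma} it is discrete in $\GL_d(K)$ (Condition (1)) and every element has determinant in $O_\infty^\times$ (Condition (2)). For $\gamma \in \Gamma_{\wt{\sigma}}$, pick a chain $(L_j)_{j \in \Z}$ representing $\tau$; then $\gamma L_j = L_{j+\ell}$ for some $\ell \in \Z$. Comparing volumes with respect to a fixed Haar measure on $K^{\oplus d}$ exactly as in the proof of Lemma~\ref{lem:stabilizer}, the hypothesis $|\det \gamma|_\infty = 1$ forces $\mathrm{vol}(L_j) = \mathrm{vol}(L_{j+\ell})$, and since volumes strictly decrease along the chain this gives $\ell = 0$. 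Hence $\gamma \in \mathrm{Stab}_{\GL_d(K)}(L_0)$, which is a compact open subgroup of $\GL_d(K)$ conjugate to $\GL_d(\cO)$. The intersection of the discrete subgroup $\GL_d(F) \cap g\bK g^{-1}$ with this compact subgroup is finite, completing part (1). One can equally well invoke Lemma~\ref{lem:stabilizer} directly to see that $\gamma$ cannot exchange two distinct vertices of $\tau$ and hence fixes each, whereupon the same volume argument concludes.

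For part (2), any two lifts of $\sigma$ to $\wt{X}_{\bK,\bullet}$ (resp. of $\sigma'$ to $\wt{X}_{\bK',\bullet}$) lie in a single $\GL_d(F)$-orbit by the very definition of the quotient $X_{\bK,\bullet} = \GL_d(F) \backslash \wt{X}_{\bK,\bullet}$. If $\wt{\sigma}_2 = \delta\, \wt{\sigma}_1$ with $\delta \in \GL_d(F)$, then conjugation by $\delta$ induces an isomorphism $\Gamma_{\wt{\sigma}_1} \xto{\cong} \Gamma_{\wt{\sigma}_2}$, proving that the isomorphism class of the isotropy group depends only on $\sigma$ (resp.\ $\sigma'$). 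I anticipate no real obstacle: the only substantive input is the volume/determinant argument for finiteness, which is essentially already present in the proof of Lemma~\ref{lem:stabilizer}; the remaining claims are coset manipulations.
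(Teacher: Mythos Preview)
The paper gives no proof of this lemma; it simply prefaces it with ``The following lemma can be checked easily.'' Your argument is correct and is exactly the intended easy verification: the explicit description $\Gamma_{\wt{\sigma}} = (\GL_d(F)\cap g\bK g^{-1})\cap \mathrm{Stab}_{\GL_d(K)}(\tau)$, the inclusion from $\bK'\subset\bK$, the volume/determinant argument for finiteness, and the conjugation argument for independence of the lift are all sound.

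One small remark on economy. The finiteness of the isotropy group is already recorded in Section~\ref{sec:def Gamma}: Conditions~(1) and~(3) for the arithmetic subgroup $\GL_d(F)\cap g\bK g^{-1}$ together imply that the stabilizer of any simplex in $\cBT_\bullet$ is finite (this is the parenthetical ``the stabilizer of a simplex is finite'' there, and is invoked again in Section~\ref{sec:BMquot}). So you could simply cite that discussion rather than rerun the volume computation from Lemma~\ref{lem:stabilizer}; but spelling it out, as you do, does no harm.
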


The lemma above shows in particular that the index
$[\Gamma_{\wt{\sigma}} : \Gamma_{\wt{\sigma}'}]$ is finite and
depends only on $\sigma'$ and $f_{\bK',\bK}$. We denote
this index by $e_{\bK',\bK}(\sigma')$ and call it the
ramification index of $f_{\bK',\bK}$ at $\sigma'$.

Let $M$ be an abelian group.
Let $i$ be an integer with $0 \le i \le d$.
We set $X'_{\bK,i}= \coprod_{\sigma \in X_{\bK,i}} O(\sigma)$.
The map $f_{\bK',\bK} : X_{\bK',\bullet} \to X_{\bK,\bullet}$
induces a map $X'_{\bK',i} \to X'_{\bK,i}$ which we denote
also by $f_{\bK'.\bK}$.
Let $m = (m_{\nu})_{\nu \in X'_{\bK,i}}$ be an element
of the $\{\pm 1\}$-module $\prod_{\nu \in X'_{\bK,i}} M$.
We define the element $f^*_{\bK',\bK}(m)$
in $\prod_{\nu \in X'_{\bK',i}} M$ to be
$$
f^*_{\bK',\bK}(m) = (m'_{\nu'})_{\nu' \in X'_{\bK',i}}
$$
where for $\nu' \in O(\sigma') \subset X'_{\bK',i}$,
the element $m'_{\nu'} \in M$ is given by
$m'_{\nu'} = e_{\bK',\bK}(\sigma') m_{f_{\bK',\bK}(\nu')}$.
The following lemma can be checked easily.
\begin{lem}
Let the notation be as above.
\begin{enumerate}
\item The map $f^*_{\bK',\bK} : \prod_{\nu \in X'_{\bK,i}} M
\to \prod_{\nu' \in X'_{\bK',i}} M$ is a homomorphism of
$\{\pm 1\}$-modules.
\item The map $f^*_{\bK',\bK} : \prod_{\nu \in X'_{\bK,i}} M
\to \prod_{\nu' \in X'_{\bK',i}} M$ sends an element in 
the subgroup $\bigoplus_{\nu \in X'_{\bK,i}} M 
\subset \prod_{\nu \in X'_{\bK,i}} M$ to an element
in  $\bigoplus_{\nu \in X'_{\bK',i}} M$.
\item For $1 \le i \le d-1$, the diagrams
$$
\begin{CD}
\prod_{\nu \in X'_{\bK,i}} M @>{\wt{\partial}_{i,\prod}}>> 
\prod_{\nu \in X'_{\bK,i-1}} M \\
@V{f_{\bK',\bK}^*}VV @V{f_{\bK',\bK}^*}VV \\
\prod_{\nu' \in X'_{\bK',i}} M @>{\wt{\partial}_{i,\prod}}>> 
\prod_{\nu' \in X'_{\bK',i-1}} M
\end{CD}
$$
and
$$
\begin{CD}
\bigoplus_{\nu \in X'_{\bK,i}} M @>{\wt{\partial}_{i,\oplus}}>> 
\bigoplus_{\nu \in X'_{\bK,i-1}} M \\
@V{f_{\bK',\bK}^*}VV @V{f_{\bK',\bK}^*}VV \\
\bigoplus_{\nu' \in X'_{\bK',i}} M @>{\wt{\partial}_{i,\oplus}}>> 
\bigoplus_{\nu' \in X'_{\bK',i-1}} M
\end{CD}
$$
are commutative.
\end{enumerate}
 
\end{lem}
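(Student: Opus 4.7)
The plan is to handle the three parts of the lemma in turn, with the bulk of the work concentrated in part~(3). For part~(1), I would unwind the definitions directly: the map $f := f_{\bK',\bK}$ restricts, for each $\sigma' \in X_{\bK',i}$, to a bijection of $\pmone$-torsors $O(\sigma') \xrightarrow{\cong} O(f(\sigma'))$, and the ramification index $e_{\bK',\bK}(\sigma')$ depends only on the simplex (not on any orientation). A short computation using the description of the $\pmone$-action in Section~\ref{sec:def homology} then shows $f^*$ is $\pmone$-equivariant. For part~(2), the inclusion $\bK' \subset \bK$ of finite index forces the fibers of $f$ on each $X_{\bK',i}$ to be finite (bounded by $[\bK : \bK']$), so $\mathrm{supp}(f^*m) \subset f^{-1}(\mathrm{supp}(m))$ is finite whenever $m$ has finite support.

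For part~(3), I would expand both composites on an element $m = (m_\nu)_\nu$ and evaluate at $\nu'' \in O(\tau')$ with $\tau' \in X_{\bK',i-1}$, setting $\tau := f(\tau')$. Using the naturality of the face maps $s_v$ under the torsor bijections $O(\sigma') \to O(f(\sigma'))$, each triple $(v', \sigma', \nu')$ contributing to $\wt{\partial}(f^*m)_{\nu''}$ projects under $f$ to a triple $(v, \sigma, \nu)$ contributing to $\wt{\partial}(m)_{f(\nu'')}$, and conversely, for each such $(v, \sigma, \nu)$ and each $\sigma' \in f^{-1}(\sigma)$ with $\tau' \le \sigma'$, the data $(v', \nu')$ is uniquely determined. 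The commutativity (for $\wt{\partial}_{i,\prod}$) therefore reduces to the combinatorial identity
\[
e_{\bK',\bK}(\tau') = \sum_{\sigma'} e_{\bK',\bK}(\sigma'),
\]
where the sum runs over $\sigma' \in X_{\bK',i}$ with $f(\sigma') = \sigma$ and $\tau' \le \sigma'$, for each pair $(\tau', \sigma)$ with $\tau \le \sigma$. The diagram for $\wt{\partial}_{i,\oplus}$ then follows by restriction via part~(2).

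To establish the identity, I would fix a lift $\wt{\tau}' \in \wt{X}_{\bK',i-1}$ of $\tau'$, set $\wt{\tau} := f(\wt{\tau}')$, and choose a lift $\wt{\sigma} \in \wt{X}_{\bK,i}$ of $\sigma$ with $\wt{\tau} \le \wt{\sigma}$, and count the set
\[
\wt{A} := \{\wt{\sigma}' \in \wt{X}_{\bK',i} \,:\, \wt{\tau}' \le \wt{\sigma}'\ \text{and}\ f(\wt{\sigma}')\ \text{lifts}\ \sigma\}
\]
in two ways. Grouping by the image $\wt{\sigma}_2 := f(\wt{\sigma}')$, which lies in the set $L_\tau(\sigma)$ of lifts of $\sigma$ having $\wt{\tau}$ as a face: using that $f$ is the quotient by $\bK/\bK'$ on the first factor, one sees that over each $\wt{\sigma}_2$ lies exactly one element of $\wt{A}$, so $|\wt{A}| = |L_\tau(\sigma)| = [\Gamma_{\wt{\tau}} : \Gamma_{\wt{\sigma}}]$ by orbit--stabilizer applied to the transitive $\Gamma_{\wt{\tau}}$-action on $L_\tau(\sigma)$. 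Grouping instead by the class $\sigma' \in X_{\bK',i}$ of $\wt{\sigma}'$: the fiber over $\sigma'$ is a single $\Gamma_{\wt{\tau}'}$-orbit contributing $[\Gamma_{\wt{\tau}'} : \Gamma_{\wt{\sigma}'}]$ (the stabilizer order depends only on $\sigma'$). Equating the two counts and rearranging will yield the desired identity. The main obstacle will be this double-counting argument together with the bookkeeping of signs in the boundary expansions; the sign compatibility is taken care of by the naturality of $f$ on orientation torsors.
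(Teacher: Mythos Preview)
Your proposal is correct. The paper itself gives no proof of this lemma beyond the sentence ``The following lemma can be checked easily,'' so there is no approach to compare against; your reduction of part~(3) to the ramification-index identity $e_{\bK',\bK}(\tau') = \sum_{\sigma'} e_{\bK',\bK}(\sigma')$ and the double-counting argument that establishes it are a valid way to fill in what the paper omits. One small remark: in the final ``rearranging'' step you should make explicit that for $\wt{\sigma}' \in \wt{A}$ the index $[\Gamma_{\wt{\tau}}:\Gamma_{f(\wt{\sigma}')}]$ is independent of $\wt{\sigma}'$ (all $f(\wt{\sigma}')$ lie in the transitive $\Gamma_{\wt{\tau}}$-orbit $L_\tau(\sigma)$, so the stabilizers are conjugate); with this, equating the two counts of $|\wt{A}|$ and using the tower $\Gamma_{\wt{\sigma}'} \subset \Gamma_{\wt{\tau}'} \subset \Gamma_{\wt{\tau}}$ versus $\Gamma_{\wt{\sigma}'} \subset \Gamma_{f(\wt{\sigma}')} \subset \Gamma_{\wt{\tau}}$ gives the identity immediately.
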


The lemma above shows that the map $f^*_{\bK',\bK}$
induces homomorphisms $H_*(X_{\bK,\bullet},M)
\to H_*(X_{\bK',\bullet},M)$ and
$H^\mathrm{BM}_*(X_{\bK,\bullet},M)
\to H^\mathrm{BM}_*(X_{\bK',\bullet},M)$
of abelian groups. We denote these homomorphisms
also by $f_{\bK',\bK}^*$.
We remark here that in \cite[p.561, 5.3.3]{KY:Zeta elements},
we implicitly use these pullback maps for the Borel-Moore
homology.

The proof of the following lemma is straightforward
and is left to the reader.
\begin{lem}
Let the notation be as above.
\begin{enumerate}
\item 
Suppose that $\bK'$ is a normal subgroup of $\bK$.
Then the homomorphism $f^*_{\bK',\bK}$
induces an isomorphism $H^\mathrm{BM}_*(X_{\bK,\bullet},M) 
\cong H^\mathrm{BM}_*(X_{\bK',\bullet},M)^{\bK/\bK'}$
and a similar statement holds for 
$H_*$.
\item
Let $M$ be a $\Q$-vector space. Then the diagrams
$$
\begin{CD}
H^\mathrm{BM}_{d-1} (X_{\bK,\bullet}, M) @>{\cong}>>
Hom_{\GL_d(F_\infty)}(\St_d, \cC^\bK(M)) \\
@V{f^*_{\bK',\bK}}VV @VVV \\
H^\mathrm{BM}_{d-1} (X_{\bK',\bullet}, M) @>{\cong}>>
Hom_{\GL_d(F_\infty)}(\St_d, \cC^{\bK'}(M))
\end{CD}
$$
and
$$
\begin{CD}
H_{d-1} (X_{\bK,\bullet}, M) @>{\cong}>>
Hom_{\GL_d(F_\infty)}(\St_d, \cC_c^\bK(M)) \\
@V{f^*_{\bK',\bK}}VV @VVV \\
H_{d-1} (X_{\bK',\bullet}, M) @>{\cong}>>
Hom_{\GL_d(F_\infty)}(\St_d, \cC_c^{\bK'}(M))
\end{CD}
$$
are commutative.
Here the horizontal arrows are the isomorphisms
given in Lemma~\ref{lem:Steinberg8} 
and Corollary~\ref{cor:Steinberg8}, 
and the right vertical
arrows are the map induced by the quotient map
$\GL_d(F) \backslash \GL_d(\A) /(\bK' \times F_\infty^\times)
\to \GL_d(F) \backslash \GL_d(\A) /(\bK\times F_\infty^\times)$.
\end{enumerate}
 
\end{lem}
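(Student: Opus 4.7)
My plan consists of two parallel arguments, one for each part of the lemma, both reducing to chain-level computations.

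For part (1), I would argue at the level of chain complexes using the right action of $\bK/\bK'$ on $\GL_d(\A^\infty)/\bK'$, which descends to an action on $X_{\bK',\bullet}$ whose orbit space is $X_{\bK,\bullet}$. For each $\sigma \in X_{\bK,i}$ the group $\bK/\bK'$ acts transitively on the fiber $f_{\bK',\bK}^{-1}(\sigma)$; normality of $\bK'$ in $\bK$ ensures that the $\bK/\bK'$-stabilizers of all the lifts $\sigma' \in f^{-1}(\sigma)$ are conjugate, hence of common order equal to the ramification index $e_{\bK',\bK}(\sigma')$, and in particular $|f^{-1}(\sigma)| \cdot e_{\bK',\bK}(\sigma') = [\bK:\bK']$. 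The $\bK/\bK'$-invariants in $\prod_{\sigma' \in f^{-1}(\sigma)} M$ are precisely the constant tuples, isomorphic to $M$ via the diagonal, and the pullback $f^*_{\bK',\bK}$ lands in these invariants (the scalar $e_{\bK',\bK}(\sigma')$ appearing in its definition is exactly what is needed for the map to be injective on invariants). Assembling this over all $\sigma$ and checking $\{\pm 1\}$-equivariance (the action of $\bK/\bK'$ preserves orientations of each simplex) and compatibility with the boundary $\wt{\partial}_{i,\prod}$, one obtains an isomorphism at the chain level. Passing to $\{\pm 1\}$-coinvariants yields the claimed isomorphism for $H^\BM_*$; the same argument, with $\prod$ replaced by $\bigoplus$ (each fiber being finite so no convergence issue arises), handles $H_*$.

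For part (2), I would trace the proof of Lemma~\ref{lem:Steinberg8} to verify the functoriality of the Steinberg identification in $\bK$. By Lemma~\ref{lem:Steinberg8}(2), an element of $H^\BM_{d-1}(X_{\bK,\bullet},M)$ is uniquely determined by its image in $\Map(\GL_d(F)\bsl\GL_d(\A)/(\bK\times F_\infty^\times \cI),M)$ obtained by evaluation at a fixed Iwahori-spherical vector $v \in \St_d^\cI$, and this image is exactly the natural map on pointed $(d-1)$-simplices $X_{\bK,d-1,*}$ identified with the double-coset space via Section~\ref{sec:6.1.2}. Both sides of the first diagram therefore land in function spaces on comparable double-coset spaces, and it suffices to show that the pullback $f^*_{\bK',\bK}$ on $H^\BM_{d-1}$ corresponds, on pointed $(d-1)$-simplices, to the naive pullback of functions along the projection $\GL_d(F)\bsl\GL_d(\A)/(\bK'\times F_\infty^\times\cI) \to \GL_d(F)\bsl\GL_d(\A)/(\bK\times F_\infty^\times\cI)$. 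This reduces to checking that the normalization constants implicit in the Steinberg isomorphism of Lemma~\ref{lem:Steinberg8} absorb the ramification factor $e_{\bK',\bK}(\sigma')$ appearing in the definition of $f^*_{\bK',\bK}$; I expect this to follow from the fact that the Steinberg isomorphism passes through the evaluation pairing $H^\BM_{d-1}(X_{\bK,\bullet},M) \times \St_d \to \cC^\bK(M)$, which involves a dual summation over lifts whose weights are exactly the inverses of the isotropy-group sizes. The bottom diagram, for $H_{d-1}$ and $\cC_c^\bK$, then follows at once by restriction, using Corollary~\ref{cor:Steinberg8}.

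The main obstacle I anticipate is in part (2), namely making the bookkeeping of ramification indices against stabilizer sizes in the Steinberg identification completely explicit, so that the exact cancellation can be seen. The argument is essentially rigid once the normalization in Lemma~\ref{lem:Steinberg8}(2) is spelled out, but requires care because $f^*_{\bK',\bK}$ was defined with a scalar $e_{\bK',\bK}(\sigma')$ in order to make it land in $\bK/\bK'$-invariants (as seen in part (1)), whereas the right vertical arrow in the diagram is an unweighted pullback of functions; the two become equal precisely because passing to pointed simplices replaces simplex-stabilizers by pointed-simplex stabilizers in a compatible way.
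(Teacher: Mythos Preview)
The paper itself gives no proof: it states that the argument is ``straightforward and is left to the reader.'' Your proposal therefore supplies considerably more than the paper does, and your overall strategy---working at the chain level for part~(1) and tracing the Steinberg identification through pointed $(d-1)$-simplices for part~(2)---is exactly what is intended.

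One point in part~(1) deserves care. You claim that $f^*_{\bK',\bK}$ gives an isomorphism at the chain level onto the $\bK/\bK'$-invariants. Over a single fiber $f^{-1}(\sigma)$, the invariants in $\prod_{\sigma'\in f^{-1}(\sigma)} M$ are the constant tuples, identified with $M$, while $f^*_{\bK',\bK}$ sends $m$ to the constant tuple with value $e_{\bK',\bK}(\sigma')\cdot m$; composed with the identification this is multiplication by the common ramification index $e$. This is an isomorphism only when $e$ is invertible in $M$, so your chain-level isomorphism claim holds for $\Q$-vector spaces (or $\Z[1/N]$-modules for suitable $N$) but not for arbitrary abelian groups. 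Since the paper's only applications are with $\Q$- or $\C$-coefficients this is harmless in context, and the statement of the lemma may well implicitly intend such $M$; but you should flag the hypothesis rather than asserting an unconditional chain-level isomorphism. (Incidentally, the role of the factor $e_{\bK',\bK}(\sigma')$ is not really ``to make the map injective on invariants''---the diagonal is already injective for torsion-free $M$---but to make $f^*_{\bK',\bK}$ commute with the boundary maps, as recorded in the preceding lemma; different faces of a simplex can have different ramification indices, and the weights are exactly what is needed for the boundary identities to hold.)

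For part~(2) your plan is correct, and your anticipated obstacle is the genuine one: the weight $e_{\bK',\bK}(\sigma')$ in $f^*_{\bK',\bK}$ must match the ratio of isotropy-group sizes that appears implicitly when one unwinds the identification of Lemma~\ref{lem:Steinberg8}(2) at levels $\bK$ and $\bK'$. Once that is made explicit the diagram commutes on the nose, and the compactly supported case follows by restriction via Corollary~\ref{cor:Steinberg8} exactly as you say.
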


\subsection{The action of $\GL_d(\A^\infty)$ and admissibility}
For $g \in \GL_d(\A^\infty)$, we let 
$\wt{\xi}_g : \wt{X}_{\bK,\bullet}
\xto{\cong} \wt{X}_{g^{-1} \bK g,\bullet}$
denote the isomorphism of simplicial complexes
induced by the isomorphism $\GL_d(\A^\infty)/\bK \xto{\cong}
\GL_d(\A^\infty)/g^{-1}\bK g$ which sends 
a coset $h \bK$ to the coset $hg\cdot g^{-1}
\bK g$ and by the identity on $\cBT_{\bullet}$.
The isomorphism $\wt{\xi}_g$ induces an isomorphism 
$\xi_{g} :X_{\bK,\bullet}
\xto{\cong} X_{g^{-1} \bK g,\bullet}$ of
simplicial complexes. 
%
For two elements
$g, g' \in \GL_d(\A^\infty)$, we have
$\xi_{gg'}=\xi_{g'}\circ \xi_g$.

For an abelian group $M$, we let
$H_*(X_{\lim,\bullet},M)=H_*(X_{\GL_d,\lim,\bullet},M)$ and
$H_*^\mathrm{BM}(X_{\lim,\bullet},M)=H_*^\mathrm{BM}(X_{\GL_d,\lim,\bullet},M)$
denote the
inductive limits
$\varinjlim_{\bK}H_*(X_{\bK,\bullet},M)$ and
$\varinjlim_{\bK}H_*^\mathrm{BM}(X_{\bK,\bullet},M)$,
respectively.
Here the transition maps in the inductive limits are 
given by $f^*_{\bK',\bK}$.
The isomorphisms
$\xi_g$ for $g\in \GL_d(\A^\infty)$ gives
rise to a smooth action of the group $\GL_d(\A^\infty)$
on these inductive limits.
If $M$ is a torsion free abelian group, then
for each compact open subgroup $\bK \subset \GL_d(\A^\infty)$,
the homomorphism
$H_*(X_{\bK,\bullet},M) \to 
H_*(X_{\lim,\bullet},M)$
is injective and its image is equal to the
$\bK$-invariant part $H_*(X_{\lim,\bullet},M)^{\bK}$
of $H_*(X_{\lim,\bullet},M)$.
Similar statement holds for $H_*^\mathrm{BM}$.

\begin{lem}\label{7_lem1}
\begin{enumerate}
\item
For any open compact subgroup 
$\bK \subset \GL_d(\A^\infty)$,
both $H_{d-1}^\mathrm{BM}(X_{\bK,\bullet},\Q)$ and
$H_{d-1}(X_{\bK,\bullet},\Q)$ are finite dimensional.
\item
The inductive limits
$H_{d-1}(X_{\lim,\bullet},\Q)$ and
$H_{d-1}^\mathrm{BM}(X_{\lim,\bullet},\Q)$ are
admissible $\GL_d(\A^\infty)$-modules.
\end{enumerate}
\end{lem}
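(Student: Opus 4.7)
Both parts reduce to showing finite-dimensionality of the $\bK$-level piece: part~(2) will follow formally from part~(1) because the $\GL_d(\A^\infty)$-action on either inductive limit is smooth by construction (every class has an open compact stabilizer), and the $\bK$-invariants of the limit coincide with the level-$\bK$ term. Since $X_{\bK,\bullet}$ decomposes as the finite disjoint union $\coprod_{j\in J_\bK}\Gamma_j\bsl\cBT_\bullet$ and $J_\bK$ is finite (the usual finiteness of double cosets for $\GL_d$ over a global field), this further reduces to the same statement for a single arithmetic quotient $\GBT$.

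For the ordinary homology $H_{d-1}(\GBT,\Q)$ the work is already essentially done in Corollary~\ref{cor:5.7}: its proof exhibits $H_{d-1}(\GBT,\Q)^{**}$ as finite-dimensional by tracing through the isomorphisms of diagrams~\eqref{eqn:diagram3} and~\eqref{eqn:diagram4} down to $H^{d-1}(\Gamma,\Q)$, which is finite-dimensional by property~(5), i.e.\ Lemma~\ref{lem:Harder} / Harder's Satz~2. Since the canonical map $V\to V^{**}$ is always injective, $H_{d-1}(\GBT,\Q)$ is itself finite-dimensional.

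For the Borel--Moore homology, the universal coefficient sequence of Section~\ref{univ_coeff}, combined with the vanishing $H_c^d(\GBT,\Z)=0$ for dimension reasons, yields an isomorphism $H_{d-1}^\BM(\GBT,\Q)\cong H_c^{d-1}(\GBT,\Q)^*$, so it suffices to show that $H_c^{d-1}(\GBT,\Q)$ is finite-dimensional. The plan is to invoke Harder's reduction theory: there is a finite subcomplex $Y\subset\GBT$ whose complement decomposes as a finite disjoint union of cuspidal ends, each of which (combinatorially) takes the form of a product of a finite cross-section with $\Z_{\ge 0}$. A Mayer--Vietoris sequence for $H_c^*$ then reduces the question to $H_c^*(Y)=H^*(Y)$ (finite-dimensional since $Y$ is a finite complex) and to $H_c^*$ of the cusps (finite-dimensional by a K\"unneth-type computation on the cylinder).

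The main obstacle will be this last step: Harder's reduction theory is standardly formulated for arithmetic quotients as topological spaces, and one must adapt it to the generalized simplicial complex framework of Section~\ref{sec:2}, verifying that the cuspidal ends of $\GBT$ truly carry the expected cylindrical simplicial structure. An alternative, more representation-theoretic route is to combine the identification $H_{d-1}^\BM(X_{\bK,\bullet},\Q)\cong\Hom_{\GL_d(F_\infty)}(\St_d,\cC^\bK(\Q))$ of Lemma~\ref{lem:Steinberg8}(1) with the classical finiteness of the space of $\Q$-valued automorphic forms at level $\bK\times\cI$ whose component at $\infty$ is Iwahori-fixed (using the one-dimensionality of $\St_d^\cI$ and Lemma~\ref{lem:Steinberg8}(2)); this is cleaner but relies on automorphic input that we might prefer to defer.
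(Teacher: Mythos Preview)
Your plan is correct, and in fact your ``alternative, more representation-theoretic route'' is exactly what the paper does; the paper does not attempt the geometric reduction-theory argument you sketch first. Concretely, the paper applies Lemma~\ref{lem:Steinberg8}(1) to get $H_{d-1}^\BM(X_{\bK,\bullet},\Q)\cong\Hom_{\GL_d(F_\infty)}(\St_d,\cC^\bK(\Q))$, then fixes a nonzero $v\in\St_d^{K_\infty}$, observes that its annihilator $J\subset\cH_\infty$ is an admissible left ideal in the sense of \cite[5.5]{BJ}, and identifies the Hom-space with the space of functions on $\GL_d(F)\backslash\GL_d(\A)$ right-invariant under $\bK\times K_\infty$ and killed by $J$; finiteness is then \cite[5.6, Theorem]{BJ}. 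Finite-dimensionality of $H_{d-1}$ is deduced simply from the inclusion $H_{d-1}\subset H_{d-1}^\BM$, so your separate argument via Corollary~\ref{cor:5.7} and Harder is not needed (though it is valid). Part~(2) is handled exactly as you say, using that $H_{d-1}^{(\BM)}(X_{\bK,\bullet},\Q)\to H_{d-1}^{(\BM)}(X_{\lim,\bullet},\Q)^\bK$ is an isomorphism.

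Your geometric route via Mayer--Vietoris on cuspidal ends would ultimately succeed but, as you anticipate, requires nontrivial work to make Harder's reduction theory simplicially precise in the generalized-complex framework; the paper's choice to invoke the Borel--Jacquet admissibility criterion is substantially shorter and avoids this.
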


\begin{proof}
It follows from Lemma~\ref{lem:Steinberg8} that
the $\Q$-vector space
$H_{d-1}^\mathrm{BM}(X_{\bK,\bullet},\Q)$ is isomorphic
to $\Hom_{\GL_d(F_\infty)}(\St_d,\cC^\bK(\Q))$.
Let $\cH_\infty$ denote the convolution algebra of
locally constant, compactly supported $\Q$-valued functions 
on $\GL_d(F_\infty)$, with respect to a Haar measure on
$\GL_d(F_\infty)$ such that the volume of $\GL_d(\cO_\infty)$
is a rational number.
We regard $\St_d$ as a left $\cH_\infty$-module.
Let $K_\infty \subset \GL_d(F_\infty)$ be a compact open
subgroup such that the $K_\infty$-invariant part
$\St_d^{K_\infty}$ is non-zero.
Let us fix a non-zero vector $v \in \St_d^{K_\infty}$ and let
$J \subset\cH_\infty$ denote the set of elements $f \in \cH_\infty$
such that $fv =0$.
Then $J$ is an admissible left ideal of $\cH_\infty$
in the sense of \cite[5.5, p.199]{BJ}.
The map $\Hom_{\GL_d(F_\infty)}(\St_d,\cC^\bK(\Q))
\to \cC^\bK(\Q)$ which sends $\varphi: \St_d \to \cC^\bK(\Q)$
to $\varphi(v)$ gives an isomorphism
from the space $\Hom_{\GL_d(F_\infty)}(\St_d,\cC^\bK(\Q))$
to the space of $\Q$-valued functions
on $\GL_d(F) \backslash \GL_d(\A)$ which is right invariant
under $\bK \times K_\infty$ and is annihilated by $J$.
Hence it follows from \cite[5.6.\ THEOREM, p.199]{BJ} that
the space $\Hom_{\GL_d(F_\infty)}(\St_d,\cC^\bK(\Q))$ is 
finite dimensional. 
Therefore $H_{d-1}^\mathrm{BM}(X_{\bK,\bullet},\Q)$ is finite
dimensional. The space $H_{d-1}(X_{\bK,\bullet},\Q)$ is
finite dimensional since it is a subspace of
$H_{d-1}^\mathrm{BM}(X_{\bK,\bullet},\Q)$. This proves the
claim (1).

For each compact open subgroup $\bK \subset \GL_d(\A^\infty)$,
the maps $H_{d-1}^\mathrm{BM}(X_{\bK,\bullet},\Q)
\to H_{d-1}^\mathrm{BM}(X_{\lim,\bullet},\Q)^\bK$
and $H_{d-1}(X_{\bK,\bullet},\Q)
\to H_{d-1}(X_{\lim,\bullet},\Q)^\bK$
are isomorphisms.
Hence the claim (2) follows from the claim (1).
\end{proof}

\subsection{The homology}
\begin{prop} \label{prop:66_3}
As a representation of $\GL_d(\A^\infty)$,
the inductive limit $H_{d-1}(X_{\lim,\bullet},\C)$ is isomorphic
to the direct sum
$$
H_{d-1}(X_{\lim,\bullet},\C) \cong
\bigoplus_\pi \pi^\infty,
$$
where $\pi = \pi^\infty \otimes \pi_\infty$ runs over
(the isomorphism classes of) the irreducible cuspidal
automorphic representations 
of $\GL_d(\A)$ such that $\pi_\infty$ is isomorphic to 
the Steinberg representation of $\GL_d(F_\infty)$.
\end{prop}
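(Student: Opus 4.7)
Applying Corollary~\ref{cor:Steinberg8} at each finite level $\bK$ and passing to the direct limit, the plan is to identify
$$H_{d-1}(X_{\lim,\bullet},\C) \;\cong\; \Hom_{\GL_d(F_\infty)}\!\bigl(\St_d,\,\cC_c(\C)\bigr),$$
where $\cC_c(\C):=\varinjlim_\bK \cC_c^\bK(\C)$ is the space of smooth, compactly supported $\C$-valued functions on $\GL_d(F)\backslash \GL_d(\A)/F_\infty^\times$. Commuting $\Hom$ past the filtered colimit in the second argument is legitimate because $\St_d$ is finitely generated over $\GL_d(F_\infty)$; alternatively, the admissibility supplied by Lemma~\ref{7_lem1} makes the passage formal. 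The space $\cC_c(\C)$ sits naturally inside the smooth vectors of $L^2\!\bigl(\GL_d(F)\backslash\GL_d(\A)/F_\infty^\times\bigr)$.

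\textbf{Step 2 (Restriction to the cuspidal spectrum).}
Since $\St_d$ is in the discrete series of $\GL_d(F_\infty)$, any $\GL_d(F_\infty)$-equivariant map from $\St_d$ to $L^2$ factors through the discrete part $L^2_{\mathrm{disc}}$. By the Moeglin--Waldspurger classification, $L^2_{\mathrm{disc}}$ decomposes with multiplicity one as a direct sum of Speh representations $\mathrm{Speh}(\sigma,k)$, where $\sigma$ is a cuspidal automorphic representation of $\GL_{d/k}(\A)$. For $k\ge 2$ the local factor at $\infty$ of $\mathrm{Speh}(\sigma,k)$ is non-tempered, whereas $\St_d$ is tempered; therefore the only discrete summands with infinite component isomorphic to $\St_d$ are the $k=1$ ones, i.e.\ genuine cuspidal representations. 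Consequently the image of every $\varphi:\St_d\to \cC_c(\C)$ is contained in the cuspidal spectrum.

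\textbf{Step 3 (Isotypic decomposition).}
Strong multiplicity one for $\GL_d$ (Piatetski-Shapiro, Jacquet--Shalika) gives $L^2_{\mathrm{cusp}}=\bigoplus_\pi \pi$ with multiplicity one. Taking $\Hom_{\GL_d(F_\infty)}(\St_d,-)$ and applying Schur's lemma to the irreducible admissible representation $\St_d$ yields
$$\Hom_{\GL_d(F_\infty)}\!\bigl(\St_d,\,L^2_{\mathrm{cusp},\mathrm{sm}}\bigr)\;\cong\;\bigoplus_{\pi:\,\pi_\infty\cong \St_d} \pi^\infty.$$
Combined with Step 2 this produces a $\GL_d(\A^\infty)$-equivariant inclusion of $H_{d-1}(X_{\lim,\bullet},\C)$ into $\bigoplus_{\pi:\,\pi_\infty\cong \St_d} \pi^\infty$. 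To obtain the reverse inclusion (and thus the claimed isomorphism), one must show that every cuspidal $\pi$ with $\pi_\infty\cong \St_d$ embeds into $\cC_c(\C)$, not merely into $L^2_{\mathrm{cusp},\mathrm{sm}}$.

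\textbf{Main obstacle.}
The hard part is precisely this last inclusion: cusp forms attached to $\pi$ must have compact support modulo centre when $\pi_\infty\cong \St_d$. The proposed route is to exploit the function-field setting by interpreting $\GL_d(F)\backslash \GL_d(\A)/F_\infty^\times\cI \bK$ as a moduli of rank-$d$ vector bundles on $C$ equipped with a level-$\bK$ structure at the finite places and a complete flag at $\infty$ (the viewpoint developed in Sections~\ref{sec:locfree}--\ref{sec:HN}), and then applying Harder--Narasimhan stratification: the evaluation of a cuspidal form at a HN-stratum indexed by a strictly unstable type can be expressed in terms of constant terms along proper parabolics and therefore vanishes, leaving only finitely many semistable-type strata on which the function is supported. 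This gives compact support modulo centre and completes the reverse inclusion.
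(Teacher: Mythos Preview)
Your overall strategy matches the paper's: identify $H_{d-1}(X_{\lim,\bullet},\C)$ with $\Hom_{\GL_d(F_\infty)}(\St_d,\cC_c(\C))$ via Corollary~\ref{cor:Steinberg8}, embed into $L^2$, use the Moeglin--Waldspurger classification plus multiplicity one to land in $\bigoplus_{\pi_\infty\cong\St_d}\pi^\infty$, and then argue the reverse inclusion.

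Two points of comparison. First, in Step~2 the paper does not argue via ``$\St_d$ is discrete series, so maps factor through $L^2_{\mathrm{disc}}$''; instead it observes that the image $\cA$ of $H_{d-1}(X_{\lim,\bullet},\C)\otimes\St_d$ in $\cC(\C)$ is an \emph{admissible} subrepresentation of $L^2$ (by Lemma~\ref{7_lem1}), and an admissible subrepresentation of $L^2$ is automatically contained in the discrete spectrum. This is a cleaner justification than the discrete-series heuristic, which on its own does not quite pin down why the projection to the continuous spectrum vanishes.

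Second, and more importantly, your ``main obstacle'' is not an obstacle at all. The paper simply cites the classical fact (Harder \cite[Theorem~1.2.1]{Harder}; see also \cite[p.~16]{Laumon2}) that over a function field every cusp form on $\GL_d(\A)$ has compact support modulo center. No hypothesis on $\pi_\infty$ is needed, and no appeal to Sections~\ref{sec:locfree}--\ref{sec:HN} is required: those sections serve the analysis of the Borel--Moore homology in Theorem~\ref{7_prop1}, not this proposition. Your proposed Harder--Narasimhan argument would essentially reprove Harder's result from scratch, which is correct in spirit but unnecessary here.
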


\begin{proof}
Let $\cC(\C)$ be the space of locally constant, compactly supported
$\cC$-valued functions on $\GL_d(F) \backslash \GL_d(\A) /F_\infty^\times$.
It follows from Corollary~\ref{cor:Steinberg8} that 
the isomorphism in Lemma~\ref{lem:Steinberg8} (1) induces 
a $\GL_d(\A) = \GL_d(\A^\infty) \times \GL_d(F_\infty)$-equivariant
homomorphism
$$
\iota : H_{d-1}(X_{\lim,\bullet},\C) \otimes_\Q \St_d \to \cC(\C).
$$
We denote by $\cA$ the image of the homomorphism $\iota$.
It follows from Corollary~\ref{cor:Steinberg8} that
the map $H_{d-1}(X_{\lim,\bullet},\C) \otimes_\Q \St_d^\cI
\to \cA^\cI$ is an isomorphism. We prove that
$\cA^\cI$ is isomorphic to the right hand side of the
desired isomorphism.
Since $\cC(\C)$ consists of compactly supported functions,
$\cC(\C)$ can be regarded as a subspace of 
$L^2(\GL_d(F) \backslash \GL_d(\A)/F_\infty^\times)$.
It follows from Lemma~\ref{7_lem1} (2) that
$H_{d-1}(X_{\lim,\bullet},\C) \otimes_\Q \St_d$
is an admissible representation of $\GL_d(\A)$.
Hence $\cA$ is also an admissible representation of $\GL_d(\A)$.
Since $\cA$ is an admissible subrepresentation of 
$L^2(\GL_d(F) \backslash \GL_d(\A)/F_\infty^\times)$,
it follows that $\cA$ is contained in a discrete spectrum of
$L^2(\GL_d(F) \backslash \GL_d(\A)/F_\infty^\times)$
and is a direct sum of irreducible admissible representations.
Let $\pi \subset \cA$ be an irreducible subrepresentation.
It follows from the construction of $\cA$ that the component
$\pi_\infty$ at $\infty$ of $\pi$ is isomorphic to the
Steinberg representation $\St_d$.
It follows from the classification (\cite[p.606, Th\'eor\`eme]{MW}) 
of the discrete 
spectrum of $L^2(\GL_d(F) \backslash \GL_d(\A)/F_\infty^\times)$
that $\pi$ does not belong to the residual spectrum.
Hence $\pi$ is an irreducible cuspidal automorphic representation.
It follows from the multiplicity one theorem 
that $\cA$ is isomorphic to the direct sum of 
the irreducible subrepresentations of $\cA$.
Hence to prove the claim, it suffices to show that
any cuspidal irreducible automorphic subrepresentation 
$\pi$ of $L^2(\GL_d(F) \backslash \GL_d(\A)/F_\infty^\times)$ 
whose component $\pi_\infty$ at $\infty$ is isomorphic to $\St_d$
is contained in $\cA$.
It is essentially proved in \cite[Theorem 1.2.1]{Harder}
(cf. \cite[p.16]{Laumon2}) that the support of a cusp form 
on $\GL_d(\A)$ is compact modulo center.
It follows that $\pi$ is a subspace of $\cC(\C)$. 
Let us write $\pi = \pi^\infty \otimes \pi_\infty$.
Since $\pi_\infty$ is isomorphic to $\St_d$, there
exists, for any vector $v \in \pi^\infty$, a 
$\GL_d(F_\infty)$-equivariant homomorphism $\St_d \to \cC(\C)$
whose image contains $\Q v \otimes \St_d$.
Hence it follows from Corollary~\ref{cor:Steinberg8} that
$\Q v \otimes \St_d$ is contained in $\cA$. Therefore
$\pi$ is a subrepresentation of $\cA$. This proves the claim.
\end{proof}

\begin{remark}
\label{rmk:HMW}
We can prove Proposition~\ref{prop:66_3} 
also by using the argument in \cite{Harder}:
it follows that $H_{d-1}(X_{\lim,\bullet},\C)$ is isomorphic to
the subspace $H$ of $L^2(\GL_d(F) F_\infty^\times \backslash \GL_d(\A))$
spanned by the subrepresentations whose component at $\infty$
is isomorphic to the Steinberg representation.
Then the classification \cite[p.606, Th\'eor\`eme]{MW}
shows that any constituent of $H$ is an irreducible cuspidal automorphic 
representation.
\end{remark}

\section{The Borel-Moore homology of an arithmetic quotient}
\label{sec:BMquot}
The goal of this section is to prove the following theorem.
\begin{thm}\label{7_prop1}
Let $\pi = \pi^\infty \otimes \pi_\infty$ be an irreducible
smooth representation of $\GL_d(\A)$
such that $\pi^\infty$ appears as a subquotient of
$H_{d-1}^\mathrm{BM}(X_{\lim,\bullet},\C)$.
Then there exist an integer $r \ge 1$, 
a partition $d=d_1 + \cdots + d_r$ of $d$,
and irreducible cuspidal automorphic representations $\pi_i$
of $\GL_{d_i}(\A)$ for $i=1,\ldots,d$ which satisfy
the following properties:
\begin{itemize}
\item For each $i$ with $0 \le i \le r$,
the component $\pi_{i,\infty}$ at $\infty$ of $\pi_i$ is 
isomorphic to the
Steinberg representation of $\GL_{d_i}(F_\infty)$.
\item Let us write $\pi_i = \pi_i^\infty \otimes \pi_{i,\infty}$.
Let $P \subset \GL_d$ denote the standard parabolic subgroup
corresponding to the partition $d=d_1 + \cdots + d_r$.
Then $\pi^\infty$ is isomorphic to a subquotient of the unnormalized 
parabolic induction $\Ind_{P(\A^\infty)}^{\GL_d(\A^\infty)}
\pi_1^\infty \otimes \cdots \otimes \pi_r^\infty$.
\end{itemize}
Moreover for any subquotient $H$ of $H_{d-1}^\mathrm{BM}(X_{\lim,\bullet},\C)$
which is of finite length as a representation of $\GL_d(\A^\infty)$,
the multiplicity of $\pi$ in $H$ is at most one.
\end{thm}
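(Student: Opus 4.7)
The plan is to first reformulate the theorem in terms of spaces of functions using Lemma~\ref{lem:Steinberg8}(1): passing to the inductive limit over $\bK$, the Borel--Moore homology $H_{d-1}^{\BM}(X_{\lim,\bullet},\C)$ is identified, as a $\GL_d(\A^\infty)$-module, with $\Hom_{\GL_d(F_\infty)}(\St_d,\cC(\C))$, where $\cC(\C)=\varinjlim_\bK\cC^\bK(\C)$ is the full space of smooth $\C$-valued functions on $\GL_d(F)\backslash\GL_d(\A)/F_\infty^\times$, with no growth or compactness condition (in contrast to the cuspidal case of Proposition~\ref{prop:66_3}, which involved $\cC_c^\bK$). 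Thus it suffices to classify the irreducible $\GL_d(\A^\infty)$-subquotients of the $\St_d$-isotypic part of $\cC(\C)$ and to show each occurs with multiplicity at most one.

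\textbf{Geometric filtration.} To bridge the gap between ``arbitrary locally constant functions'' and ``parabolic inductions from cuspidal data'', I would use the moduli interpretation of $\GL_d(F)\backslash\GL_d(\A^\infty)/\bK$ as a set parametrizing rank $d$ vector bundles on $C$ with $\bK$-level structure (Section~\ref{sec:locfree}). The Harder--Narasimhan stratification then indexes points by partitions $d=d_1+\cdots+d_r$ and corresponding standard parabolics $P\subset\GL_d$ (Section~\ref{sec:HN}). The open semistable stratum is compact modulo center, so its contribution to $H_{d-1}^{\BM}$ is controlled exactly as in the proof of Proposition~\ref{prop:66_3} and yields the ``cuspidal with Steinberg component at $\infty$'' pieces. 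The contribution of a deeper HN stratum is, after an excision / long exact sequence argument, the $\GL_d(\A^\infty)$-representation parabolically induced from the analogous object built for the Levi $\GL_{d_1}\times\cdots\times\GL_{d_r}$. Making the filtration and this identification precise is exactly Propositions~\ref{7_prop1b} and~\ref{7_prop2}, whose statements and proofs occupy Sections~\ref{sec:props}--\ref{sec:pf2}.

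\textbf{Structural conclusion.} Combining these two steps (this is the content of Section~\ref{sec:pfthm}), the associated graded of the HN filtration on $H_{d-1}^{\BM}(X_{\lim,\bullet},\C)$ is a direct sum, over partitions $d=d_1+\cdots+d_r$, of unnormalized parabolic inductions $\Ind_{P(\A^\infty)}^{\GL_d(\A^\infty)}\pi_1^\infty\otimes\cdots\otimes\pi_r^\infty$, where each $\pi_i$ is irreducible cuspidal on $\GL_{d_i}(\A)$ with $\pi_{i,\infty}\cong\St_{d_i}$. The Steinberg condition at $\infty$ for each factor is forced by the condition on $\pi_\infty$ via the local representation-theoretic fact that $\St_d$ can be a subquotient of $\Ind_{P(F_\infty)}^{\GL_d(F_\infty)}\tau_1\otimes\cdots\otimes\tau_r$ for irreducible $\tau_i$ on $\GL_{d_i}(F_\infty)$ only if each $\tau_i\cong\St_{d_i}$: one matches cuspidal supports as Bernstein--Zelevinsky segments and rules out unramified twists using the triviality of the central character at $\infty$ enforced by the quotient by $F_\infty^\times$. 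An irreducible $\GL_d(\A^\infty)$-subquotient of such an induction is then of the form required by Theorem~\ref{7_prop1}.

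\textbf{Multiplicity one and the main obstacle.} For the multiplicity bound in a finite-length subquotient $H$, strong multiplicity one for isobaric sums (Jacquet--Shalika) shows that the cuspidal data $(\pi_1,\ldots,\pi_r)$ is determined up to permutation by $\pi^\infty$, so $\pi$ appears in only one graded piece of the HN filtration, and with multiplicity one there. The hardest step is the geometric input of the second paragraph: one must control the boundary of each HN stratum inside $X_{\bK,\bullet}$, verify that the corresponding Borel--Moore long exact sequences degenerate in an $\GL_d(\A^\infty)$-equivariantly controlled way, and match the resulting induced modules with parabolic inductions from the Levi subgroups. This Grayson-style analysis, together with the combinatorics of the Bruhat--Tits building at the boundary of each stratum, is where the substantive work of Sections~\ref{sec:props}--\ref{sec:pfthm} is concentrated.
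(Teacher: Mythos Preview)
Your overall architecture matches the paper's: reduce to the HN-stratified cover $X_{\bK,\bullet}^{(\alpha),\cD}$, use Propositions~\ref{7_prop1b} and~\ref{7_prop2} to identify each piece as an unnormalized induction from $\bigotimes_j H^{d_j-1}(X_{\GL_{d_j},\lim,\bullet})$, and invoke Proposition~\ref{prop:66_3} on each factor. One small confusion: you write that the condition $\pi_{i,\infty}\cong\St_{d_i}$ is ``forced by the condition on $\pi_\infty$'', but there is no hypothesis on $\pi_\infty$ in the theorem. In the paper's route this condition is not deduced from any local analysis at $\infty$; it comes for free because Proposition~\ref{7_prop2} already hands you $H^{d_j-1}(X_{\GL_{d_j},\lim,\bullet})$ on the Levi side, and Proposition~\ref{prop:66_3} (applied to $\GL_{d_j}$) says those pieces are exactly the cuspidal $\pi_j^\infty$ with $\pi_{j,\infty}\cong\St_{d_j}$.

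The genuine gap is in your multiplicity argument. Two separate issues are being elided:
\begin{itemize}
\item \emph{Uniqueness of the stratum.} Jacquet--Shalika alone does not pin down the partition, because the inductions here are \emph{unnormalized}: the stratum indexed by $\cD$ contributes $\Ind_{P_\cD}\pi_1^\infty\otimes\cdots\otimes\pi_r^\infty$, which equals the normalized induction of $\pi_1|\det|^{a_1/2}\otimes\cdots\otimes\pi_r|\det|^{a_r/2}$ with $a_i=\sum_{j>i}d_j-\sum_{j<i}d_j$. Without further input, different $(\cD,\{\pi_i\})$ could produce the same isobaric data after these twists. The paper closes this by invoking Lafforgue's Ramanujan--Petersson: each $\pi_{i,v}$ is tempered, so at almost all $v$ exactly $d_i$ Satake parameters of $\pi_v$ have absolute value $q_v^{a_i/2}$, and since the $a_i$ are strictly decreasing this determines $\cD$ (with its ordering) from $\pi$. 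Only then does strong multiplicity one fix the tuple $(\pi_1,\ldots,\pi_r)$.
\item \emph{Multiplicity inside a single induction.} Even once $(\cD,\pi_1,\ldots,\pi_r)$ is determined, you still need that $\Ind_{P_\cD(F_v)}^{\GL_d(F_v)}\pi_{1,v}\otimes\cdots\otimes\pi_{r,v}$ is multiplicity-free for every $v$; this is not automatic for arbitrary inducing data. The paper uses Zelevinsky's classification: temperedness of $\pi_{i,v}$ and the specific exponents $a_i$ force that any two linked segments in the Zelevinsky duals $\Delta_i^t,\Delta_j^t$ occur only for adjacent indices and are juxtaposed, whence the multiplicity formula \cite[9.13]{Zelevinsky} gives multiplicity one.
\end{itemize}
Your sentence ``so $\pi$ appears in only one graded piece of the HN filtration, and with multiplicity one there'' asserts both conclusions without supplying either ingredient.
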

\begin{remark}
Any open compact subgroup of $\GL_d(\A^\infty)$
is conjugate to an open subgroup of $\GL_d(\wh{A})$.
The set of the open subgroups
of $\GL_d(\wh{A})$ is cofinal in the inductive
system of all open compact subgroups of $\GL_d(\A^\infty)$.
Therefore, to prove Theorem~\ref{7_prop1}, we may
without loss of generality assume that 
the group $\bK$ is contained in
$\GL_d(\wh{A})$, and we may
replace the inductive limit $\varinjlim_{\bK}$ in the definition of
$H_{d-1}^\mathrm{BM}(X_{\lim,\bullet},M)$ and $H_{d-1}(X_{\lim,\bullet},M)$
with the inductive limit $\varinjlim_{\bK\subset \GL_d(\wh{A})}$.
\end{remark}

From now on until the end of this section,
we exclusively deal with the subgroups 
$\bK \subset \GL_d(\A^\infty)$ contained in $\GL_d(\wh{A})$.
The notation $\varinjlim_{\bK}$ henceforth means
the inductive limit $\varinjlim_{\bK\subset \GL_d(\wh{A})}$.

\subsection{Chains of locally free $\cO_C$-modules}
\label{sec:locfree}
\ 

In this paragraph, we introduce some terminology
for locally free $\cO_C$-modules of rank $d$ 
and then describe the
sets of simplices of $X_{\bK,\bullet}$
in terms of chains of locally free $\cO_C$-modules 
of rank $d$.
The terminology here will be used in our proof 
of Theorem~\ref{7_prop1}.

Let $\eta : \Spec F \to C$ denote the
generic point of $C$.
For each $g \in \GL_d(\A^\infty)$ and an 
$\cO_\infty$-lattice $L_\infty \subset 
\cO_\infty^{\oplus d}$,
we denote by $\cF[g,L_{\infty}]$ the
$\cO_C$-submodule of $\eta_* F^{\oplus d}$ 
characterized by the following properties:
\begin{itemize}
\item $\cF[g,L_{\infty}]$ is a
locally free $\cO_C$-module of rank $d$.
\item $\Gamma(\Spec A, \cF[g,L_{\infty}])$ 
is equal to the $A$-submodule 
$\wh{A}^{\oplus d} g^{-1} \cap F^{\oplus d}$ of 
$F^{\oplus d} = \Gamma(\Spec A,\eta_* F^{\oplus d})$.
\item Let $\iota_\infty$ denote the morphism $\Spec \cO_\infty \to C$.
Then $\Gamma(\Spec \cO_\infty, \iota_\infty^* \cF[g,L_{\infty}])$
is equal to the $\cO_\infty$-submodule $L_{\infty}$ of
$F_\infty^{\oplus d} = \Gamma(\Spec \cO_\infty, \iota_\infty^* \eta_* F^{\oplus d})$.
\end{itemize}

Let $\cF$ be a locally free $\cO_C$-modules
of rank $d$. Let $I \subset A$ be a
non-zero ideal. We regard the $A$-module
$A/I$ as a coherent $\cO_C$-module of
finite length. A level $I$-structure on
$\cF$ is a surjective homomorphism
$\cF \to (A/I)^{\oplus d}$ of $\cO_C$-modules.
Let $\bK^\infty_I
\subset \GL_d(\wh{A})$ be 
the kernel of the
homomorphism $\GL_d(\wh{A}) \to \GL_d(A/I)$. 
The group $\GL_d(A/I) \cong
\GL_d(\wh{A})/\bK^\infty_I$ acts from
the left on the set of level $I$-structures on $\cF$,
via its left action on $(A/I)^{\oplus d}$.
(We regard $(A/I)^{\oplus d}$ as an $A$-module of
row vectors. The left action of $\GL_d(A/I)$ on
$(A/I)^{\oplus d}$ is described as $g\cdot b
= b g^{-1}$ for $g\in \GL_d(A/I)$, $b\in
(A/I)^{\oplus d}$.) For a subgroup
$\bK \subset \GL_d(\wh{A})$ containing $\bK_I^\infty$,
a level $\bK$-structure on $\cF$ is a
$\bK/\bK^\infty_I$-orbit of level
$I$-structures on $\cF$. For an
open subgroup $\bK \subset \GL_d(\wh{A})$,
the set of level $\bK$-structures on $\cF$
does not depend, up to canonical isomorphisms,
on the choice of an ideal $I$ with
$\bK_I^\infty \subset \bK$.

Let $\bK \subset \GL_d(\wh{A})$ be an open subgroup.
Let $(g,\sigma)$ be an $i$-simplex of
$\wt{X}_{\bK,\bullet}$.
Take a chain $\cdots \supsetneqq L_{-1} 
\supsetneqq L_0 \supsetneqq 
L_1 \supsetneqq \cdots$ of $\cO_{\infty}$-lattices
of $F_{\infty}^{\oplus d}$ which represents $\sigma$.
To $(g,\sigma)$ we associate the
chain $\cdots \supsetneqq \cF[g,L_{-1}] \supsetneqq
\cF[g,L_0] \supsetneqq \cF[g,L_1] \supsetneqq \cdots$ 
of $\cO_C$-submodules of $\eta_* F^{\oplus d}$.
Then the set of $i$-simplices in $\wt{X}_{\bK,\bullet}$
is identified with the set of the equivalences classes
of chains $\cdots \supsetneqq \cF_{-1} \supsetneqq \cF_0 
\supsetneqq \cF_1 \supsetneqq \cdots$ of $\cO_{\infty}$-lattices
of locally free $\cO_C$-submodules of rank $d$ of
$\eta_* \eta^* \cO_C^{\oplus d}$ with a level 
$\bK$-structure 
such that $\cF_{j-i-1}$ equals the
twist $\cF_{j}(\infty)$ as an $\cO_C$-submodule of
$\eta_* F^{\oplus d}$ with a level $\bK$-structure
for every $j\in \Z$.
Two chains $\cdots \supsetneqq \cF_{-1} \supsetneqq
 \cF_0 \supsetneqq \cF_1 \supsetneqq \cdots$ and 
$\cdots \supsetneqq \cF'_{-1} \supsetneqq \cF'_0 
\supsetneqq \cF'_1 \supsetneqq \cdots$ are equivalent 
if and only if there exists an integer $l$ such that
$\cF_{j} = \cF'_{j+l}$ as an $\cO_C$-submodule of
$\eta_* F^{\oplus d}$ with a level structure
for every $j\in \Z$.

Let $g\in \GL_d(\A^\infty)$ and let 
$L_\infty$ be an $\cO_\infty$-lattice of $F_\infty^{\oplus d}$.
For $\gamma \in \GL_d(F)$, the two $\cO_C$-submodules
$\cF[g,L_\infty]$ and $\cF[\gamma g,\gamma L_\infty]$ 
are isomorphic
as $\cO_C$-modules. The set of $i$-simplices
in $X_{\bK,\bullet}$
is identified with the set of the equivalence classes
of chains
$\cdots \inj \cF_{1} \inj  \cF_0 \inj
\cF_{-1} \inj \cdots$ of injective non-isomorphisms
of locally free $\cO_C$-modules of rank $d$ with
a level $\bK$-structure such that the image of
$\cF_{j+i+1}\to \cF_j$ equals the image of
the canonical injection 
$\cF_{j}(-\infty)\inj \cF_j$ for every $j\in \Z$.
Two chains $\cdots \inj \cF_{1} \inj \cF_0 \inj
\cF_{-1} \inj \cdots$ and 
$\cdots \inj \cF'_{1} \inj \cF'_0 \inj
\cF'_{-1} \inj \cdots$ are equivalent if and only if
there exists an integer $l$ and an isomorphism
$\cF_{j} \cong \cF'_{j+l}$ of $\cO_C$-modules with
level structures for every $j\in \Z$ such that 
the diagram
$$
\begin{CD}
\cdots @>>> \cF_{1} @>>> \cF_0 @>>> \cF_{-1} @>>> \cdots \\
@. @V{\cong}VV @V{\cong}VV @V{\cong}VV @. \\
\cdots @>>> \cF'_{l+1} @>>> \cF'_l @>>> \cF'_{l-1} @>>> \cdots
\end{CD}
$$
is commutative.

\subsection{Harder-Narasimhan polygons}
\label{sec:HN}
Let $\cF$ be a locally free $\cO_C$-module of 
rank $r$. For an $\cO_C$-submodule $\cF' \subset \cF$
(note that $\cF'$ is automatically locally free),
we set $z_{\cF}(\cF') = (\rank(\cF'),\deg(\cF'))
\in \Q^2$. It is known that there exists a unique convex, 
piecewise affine,
affine on $[i-1,i]$ for
$i=1,\ldots, r$, 
continuous function 
$p_{\cF} :[0,r] \to \R$ on the interval
$[0,r]$ such that the 
convex hull of the set $\{z_{\cF}(\cF')\ |\ 
\cF' \subset \cF\}$ in $\R^2$ equals
$\{(x,y)\ |\ 0\le x\le r,\, y\le p_{\cF}(x) \}$.
We define the function $\Delta p_{\cF}:
\{1,\ldots,d-1\} \to \R$ as
$\Delta p_{\cF}(i) = 2 p_{\cF}(i) - p_{\cF}(i-1)
- p_{\cF}(i+1)$. Then $\Delta p_{\cF}(i) \ge 0$
for all $i$. We note that for an invertible $\cO_C$-module
$\cL$, $\Delta p_{\cF \otimes \cL}$ equals $\Delta p_{\cF}$.
The theory of Harder-Narasimhan
filtration (\cite{HN}) implies that, 
if $i \in \Supp(\Delta p_{\cF})
= \{ i\ |\ \Delta p_{\cF}(i)>0 \}$, 
then there exists a
unique $\cO_C$-submodule $\cF' \subset \cF$
satisfying $z_{\cF}(\cF')=(i,p_{\cF}(i))$. 
We denote this $\cO_C$-submodule $\cF'$ by $\cF_{(i)}$.
The submodule $\cF_{(i)}$ has the following properties.
\begin{itemize}
\item If $i, j \in \Supp(\Delta p_{\cF})$ with $i\le j$,
then $\cF_{(i)}\subset \cF_{(j)}$ and $\cF_{(j)}/\cF_{(i)}$
is locally free.
\item If $i \in \Supp(\Delta p_{\cF})$, then
$p_{\cF_{(i)}}(x) = p_{\cF}(x)$
for $x \in [0,i]$ and $p_{\cF/\cF_{(i)}}(x-i)
=p_{\cF}(x) - \deg(\cF_{(i)})$ for 
$x \in [i,r]$.
\end{itemize}

\begin{lem}\label{7_diffdeg}
Let $\cF$ be a locally free
$\cO_C$-module of finite rank, and
let $\cF'\subset \cF$ be a $\cO_C$-submodule 
of the same rank. 
Then we have 
$0 \le p_{\cF}(i) -p_{\cF'}(i) \le 
\deg(\cF)-\deg(\cF')$
for $i =1,\ldots,\rank(\cF)-1$.
\end{lem}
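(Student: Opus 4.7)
The plan is to deduce both inequalities from statements about the convex hulls that define $p_{\cF}$ and $p_{\cF'}$. Write $r = \rank(\cF) = \rank(\cF')$ and $\Delta = \deg(\cF) - \deg(\cF')$; since $\cF/\cF'$ is a coherent torsion sheaf of length $\Delta$, this is a nonnegative integer. Note also that, in spite of the terminology used in the excerpt, the hypograph condition $\{y \le p_{\cH}(x)\}$ being convex is equivalent to $p_{\cH}$ being \emph{concave}, which is also what $\Delta p_{\cH}(i) \ge 0$ expresses; this concavity of $p_{\cF'}$ will be used in the last step.

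The lower bound $p_{\cF'}(i) \le p_{\cF}(i)$ is essentially formal. Every $\cO_C$-submodule $\cG \subset \cF'$ is also a submodule of $\cF$ with the same rank and degree, so the set $\{z_{\cF'}(\cG) : \cG \subset \cF'\}$ is contained in $\{z_{\cF}(\cG) : \cG \subset \cF\}$. Their convex hulls satisfy the same inclusion, and taking the upper boundary of each hull gives $p_{\cF'}(x) \le p_{\cF}(x)$ for all $x \in [0,r]$.

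For the upper bound, the central construction is $\cG \mapsto \cG \cap \cF'$, the intersection being taken inside $\cF$. Given a rank-$i$ subsheaf $\cG \subset \cF$, the subsheaf $\cG \cap \cF' \subset \cF'$ is torsion-free, hence locally free, and the inclusion $\cG/(\cG \cap \cF') \inj \cF/\cF'$ shows that $\cG/(\cG \cap \cF')$ is torsion of length at most $\Delta$. Consequently $\rank(\cG \cap \cF') = i$ and $\deg(\cG \cap \cF') \ge \deg(\cG) - \Delta$. This means that for every $(x,y) = z_{\cF}(\cG)$ the shifted point $(x, y - \Delta)$ lies on or below the graph of $p_{\cF'}$.

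It then remains to propagate this pointwise statement to the convex hull. Any point $(i, y)$ in the convex hull defining $p_{\cF}$ is of the form $\sum_j \lambda_j (x_j, y_j)$ with $\lambda_j \ge 0$, $\sum_j \lambda_j = 1$, and $(x_j, y_j) \in \{z_{\cF}(\cG) : \cG \subset \cF\}$; the previous paragraph gives $y_j - \Delta \le p_{\cF'}(x_j)$, and then Jensen's inequality applied to the concave function $p_{\cF'}$ yields
\[
y - \Delta \;=\; \sum_j \lambda_j (y_j - \Delta) \;\le\; \sum_j \lambda_j\, p_{\cF'}(x_j) \;\le\; p_{\cF'}\!\Bigl(\sum_j \lambda_j x_j\Bigr) \;=\; p_{\cF'}(i).
\]
Taking the supremum over all admissible $y$ gives $p_{\cF}(i) \le p_{\cF'}(i) + \Delta$. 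The only minor point to be careful about is the passage to the closed convex hull, but since $p_{\cF'}$ is continuous on the compact interval $[0, r]$ this is routine. The main conceptual ingredient is the intersection trick, which converts the degree defect between $\cF$ and $\cF'$ into a uniform vertical shift of the polygons.
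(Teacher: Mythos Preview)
Your proof is correct. The paper's own proof reads in its entirety ``Immediate from the definition of $p_{\cF}$,'' so there is nothing to compare at the level of argument; you have simply supplied the details the authors left implicit. The two ingredients you use---the inclusion $\{z_{\cF'}(\cG)\}\subset\{z_{\cF}(\cG)\}$ for the lower bound, and the intersection $\cG\mapsto\cG\cap\cF'$ together with concavity of $p_{\cF'}$ for the upper bound---are the natural ones and are presumably what the authors had in mind. Your remark that the paper's ``convex'' really means concave (the hypograph is convex) is accurate and worth keeping.
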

\begin{proof}
Immediate from the definition of $p_{\cF}$.
\end{proof}

\subsection{}
\label{sec:props}
In this paragraph, we state two propositions
(Propositions~\ref{7_prop1b} and~\ref{7_prop2}).
The proofs are given in Sections~\ref{7_prop1} and~\ref{7_prop1b}.

Given a subset $\cD \subset \{1,\ldots,d-1\}$
and a real number $\alpha > 0$,
we define the simplicial subcomplex
$X_{\bK,\bullet}^{(\alpha),\cD}$ of
$X_{\bK,\bullet}$ as follows:
A simplex of $X_{\bK,\bullet}$
belongs to $X_{\bK,\bullet}^{(\alpha),\cD}$
if and only if each of its vertices is
represented by a 
locally free $\cO_C$-module $\cF$
of rank $d$ with a level $\bK$-structure
such that $\Delta p_{\cF}(i) \ge \alpha$
holds for every $i \in \cD$.

Let $X_{\bK,\bullet}^{(\alpha)}$ denote
the union $X_{\bK,\bullet}^{(\alpha)}
= \bigcup_{\cD \neq \emptyset} X_{\bK,\bullet}^{(\alpha),\cD}$.

\begin{lem}\label{7_lem:finiteness}
For any $\alpha >0$, the set of the simplices 
in $X_{\bK,\bullet}$ not belonging to 
$X_{\bK,\bullet}^{(\alpha)}$ is finite.
\end{lem}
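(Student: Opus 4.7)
The plan is to show that every simplex $\sigma \notin X_{\bK,\bullet}^{(\alpha)}$ has all of its vertices contained in one fixed finite subset $V \subset X_{\bK,0}$, and then to invoke the local finiteness of $X_{\bK,\bullet}$. Unwinding the definition, $\sigma \notin X_{\bK,\bullet}^{(\alpha)}$ is equivalent to the statement: for every $i \in \{1,\ldots,d-1\}$ there exists a vertex $v_i$ of $\sigma$ with $\Delta p_{\cF_{v_i}}(i) < \alpha$.

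The first step is a comparison estimate: if $\cF' \subset \cF$ are rank-$d$ locally free $\cO_C$-modules with $\ell := \deg\cF - \deg\cF'$, then $|\Delta p_\cF(i) - \Delta p_{\cF'}(i)| \le 2\ell$ for all $i \in \{1,\ldots,d-1\}$. This follows from Lemma~\ref{7_diffdeg} applied at $i-1$, $i$, $i+1$, combined with the identity $\Delta p(i) = 2p(i) - p(i-1) - p(i+1)$ and the boundary values $p_\cF(0) = p_{\cF'}(0) = 0$ and $p_\cF(d) - p_{\cF'}(d) = \ell$. By the description of simplices in Section~\ref{sec:locfree}, the vertices of any simplex $\sigma$ are represented by sheaves $\cF_0 \supseteq \cF_1 \supseteq \cdots \supseteq \cF_j \supseteq \cF_0(-\infty)$, so any two vertices admit representatives differing in degree by at most $d\deg(\infty)$. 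Since $\Delta p_\cF$ is invariant under the twist $\cF \mapsto \cF(n\infty)$ (the twist shifts $p$ by a linear function, annihilated by $\Delta$), this yields $|\Delta p_v(i) - \Delta p_{v'}(i)| \le C$ for any two vertices $v, v'$ of $\sigma$, where $C := 2d\deg(\infty)$. Combined with the first paragraph, every vertex $v$ of any simplex $\sigma \notin X_{\bK,\bullet}^{(\alpha)}$ satisfies $\Delta p_v(i) < \alpha + C$ for every $i$.

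The remaining step is to show that the set $V := \{\,v=[\cF] \in X_{\bK,0} \,|\, \Delta p_\cF(i) < \alpha+C \text{ for all } i\,\}$ is finite. Since $\Delta p_\cF$ is $\Z_{\ge 0}$-valued, only finitely many functions $\Delta p : \{1,\ldots,d-1\} \to \Z_{\ge 0}$ satisfy the bound; together with the finitely many choices of $\deg\cF$ modulo $d\deg(\infty)$, this gives finitely many HN polygons in total. For each fixed HN polygon, the moduli of rank-$d$ locally free $\cO_C$-modules with that polygon is of finite type over the finite base field (standard Harder-Narasimhan theory as reformulated in \cite{Gra}), hence has finitely many points; adjoining a level $\bK$-structure defines a finite cover (as $\bK$ has finite index in $\GL_d(\wh{A})$) and preserves finiteness. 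By the local finiteness of $X_{\bK,\bullet}$ (noted in Section~\ref{subsec:X_K}) and the dimension bound $\dim\le d-1$, each $v \in V$ is contained in only finitely many simplices of $X_{\bK,\bullet}$, so the total set of simplices outside $X_{\bK,\bullet}^{(\alpha)}$ is finite. The main obstacle is the moduli-theoretic finiteness in this last paragraph; the preceding two paragraphs are purely elementary manipulations of the HN polygon.
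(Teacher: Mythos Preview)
Your argument is correct and follows the paper's approach: bound the Harder--Narasimhan data, invoke finiteness of semistable bundles of fixed rank and degree over a curve over a finite field, and conclude via local finiteness. You are in fact more explicit than the paper on one point: you use Lemma~\ref{7_diffdeg} to compare the polygons of different vertices of a single simplex, so that \emph{every} vertex of a simplex outside $X_{\bK,\bullet}^{(\alpha)}$ lands in one fixed finite set. The paper handles only the $0$-simplex case directly and then writes ``whence the claim follows,'' implicitly relying on exactly this comparison (note that a higher simplex $\sigma\notin X_{\bK,\bullet}^{(\alpha)}$ need not have any vertex $v$ with $\Delta p_v(i)<\alpha$ for \emph{all} $i$, so the passage to higher simplices is not automatic).

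One small correction, which the paper's proof also needs: $\Delta p_\cF$ is not in general $\Z_{\ge 0}$-valued. For instance, if $\cF$ is semistable of rank $2$ and degree $1$ then $p_\cF(1)=\tfrac12$. What is true is that each slope of the HN polygon lies in $\tfrac{1}{k}\Z$ for some $1\le k\le d$, so $p_\cF(i)$ and hence $\Delta p_\cF(i)$ lie in $\tfrac{1}{N}\Z$ with $N=\mathrm{lcm}(1,\dots,d)$. This weaker discreteness is all your counting argument actually uses, so the conclusion stands.
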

\begin{proof}
Let $\cP$ denote the set of continuous,
convex functions $p':[0,d]\to \R$ with
$p'(0)=0$ such that $p'(i)\in \Z$ and 
$p'$ is affine on $[i-1,i]$ for $i=1,\ldots,d$.
It is known that for any $r \ge 1$ and $f \in \Z$,
there are only a finite number of isomorphism classes
of semi-stable locally free $\cO_C$-modules of 
rank $r$ with degree $f$. Hence by the theory of
Harder-Narasimhan filtration, for any $p' \in \cP$,
the set of the isomorphism classes
of locally free $\cO_C$-modules $\cF$ with
$p_{\cF} = p'$ is finite.
Let us give an action of the group $\Z$ on the 
set $\cP$, by setting $(a\cdot p')(x)=p'(x)+ a\deg(\infty)x$
for $a \in \Z$ and for $p' \in \cP$.
Then $p_{\cF(a \infty)}= a\cdot p_{\cF}$ for
any $a \in \Z$ and for any locally free $\cO_C$-module
$\cF$ of rank $d$. For $\alpha >0$ let 
$\cP^{(\alpha)} \subset \cP$ denote the set 
of functions $p' \in \cP$ 
with $2p'(i)- p'(i-1)-p'(i+1) \le \alpha$
for each $i \in \{1,\ldots, d-1\}$.
An elementary argument shows
that the quotient $\cP^{(\alpha)}/\Z$ is a finite set,
whence the claim follows.
\end{proof}

Lemma~\ref{7_lem:finiteness} implies that 
$H_{d-1}^\mathrm{BM}(X_{\bK,\bullet},\Q)$ is canonically
isomorphic to the projective limit
$\varprojlim_{\alpha >0}
H_{d-1}(X_{\bK,\bullet},X_{\bK,\bullet}^{(\alpha)}; \Q)$
and $H_c^{d-1}(X_{\bK,\bullet},\Q)$ is canonically
isomorphic to the inductive limit
$\varinjlim_{\alpha >0}
H^{d-1}(X_{\bK,\bullet},X_{\bK,\bullet}^{(\alpha)}; \Q)$.
Thus we have an exact sequence 
\begin{equation}\label{7_cohseq}
\varinjlim_{\alpha >0} 
H^{d-2}(X^{(\alpha)}_{\bK,\bullet},\Q)
\to H_c^{d-1}(X_{\bK,\bullet},\Q)
\to H^{d-1}(X_{\bK,\bullet},\Q)
\to \varinjlim_{\alpha >0} 
H^{d-1}(X^{(\alpha)}_{\bK,\bullet},\Q).
\end{equation}

\begin{prop}\label{7_prop1b}
For $\alpha' \ge \alpha >(d-1)\deg(\infty)$, 
the homomorphisms
$H^*(X^{(\alpha)}_{\bK,\bullet},\Q)
\to H^*(X^{(\alpha')}_{\bK,\bullet},\Q)$
are isomorphisms.
\end{prop}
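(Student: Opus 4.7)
The plan is to combine a Mayer--Vietoris reduction with a Harder--Narasimhan rigidity argument. Since $X^{(\alpha)}_{\bK,\bullet} = \bigcup_{i=1}^{d-1} X^{(\alpha),\{i\}}_{\bK,\bullet}$, with $X^{(\alpha),\cD}_{\bK,\bullet} = \bigcap_{i \in \cD} X^{(\alpha),\{i\}}_{\bK,\bullet}$ for every non-empty $\cD$, the Mayer--Vietoris spectral sequence in cohomology for this covering by subcomplexes is functorial in $\alpha$. It therefore suffices to prove the corresponding stability $H^*(X^{(\alpha),\cD}_{\bK,\bullet},\Q) \to H^*(X^{(\alpha'),\cD}_{\bK,\bullet},\Q)$ is an isomorphism for every non-empty $\cD \subset \{1,\ldots,d-1\}$.

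Fix such a $\cD$. The crucial rigidity lemma, which makes essential use of the threshold $(d-1)\deg(\infty)$, is the following: for $\alpha > (d-1)\deg(\infty)$ every simplex $\sigma$ of $X^{(\alpha),\cD}_{\bK,\bullet}$ admits a canonical $F$-rational flag $\mathcal{V}_\bullet(\sigma) = (\mathcal{V}_i)_{i \in \cD}$ of subspaces of $F^{\oplus d}$ with the property that the Harder--Narasimhan step $\cF_{(i)}$ of every vertex $\cF$ of $\sigma$ has generic fibre $\mathcal{V}_i$. To prove it, take two vertices $\cF, \cF'$ of $\sigma$; up to a twist one has $\cF(-\infty) \subseteq \cF' \subseteq \cF$, so $\deg(\cF) - \deg(\cF') \le (d-1)\deg(\infty)$, and Lemma~\ref{7_diffdeg} yields $p_{\cF'}(i) \ge p_{\cF}(i) - (d-1)\deg(\infty)$. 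The saturation $\mathcal{G}'$ of $\cF'_{(i)}$ in $\cF$ is then a rank-$i$ subsheaf with $\deg \mathcal{G}' \ge p_{\cF}(i) - (d-1)\deg(\infty)$. If $\mathcal{G}'$ and $\cF_{(i)}$ differed at the generic point, then the exact sequence $0 \to \mathcal{G}' \cap \cF_{(i)} \to \mathcal{G}' \oplus \cF_{(i)} \to \mathcal{G}' + \cF_{(i)} \to 0$ combined with $\deg(\mathcal{G}' + \cF_{(i)}) \le p_{\cF}(i+1)$ and $\deg(\mathcal{G}' \cap \cF_{(i)}) \le p_{\cF}(i-1)$ would force $(d-1)\deg(\infty) \ge \Delta p_{\cF}(i) \ge \alpha$, contradicting $\alpha > (d-1)\deg(\infty)$.

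The assignment $\sigma \mapsto \mathcal{V}_\bullet(\sigma)$ then decomposes $X^{(\alpha),\cD}_{\bK,\bullet}$ as a disjoint union of simplicial subcomplexes indexed by $(\GL_d(F),\bK)$-orbits of $F$-rational flags of type $\cD$, and the stabilizer of a fixed flag identifies each piece with the analogous construction attached to the Levi factor $M_\cD = \GL_{d_1}\times\cdots\times\GL_{d_r}$ (with $d_j = i_j - i_{j-1}$) of the standard parabolic $P_\cD$. On each apartment of the Levi Bruhat--Tits building, the slope-gap conditions $\Delta p \ge \alpha$ at $i \in \cD$ cut out a union of affine half-spaces in the directions of the central cocharacters between consecutive Levi factors, and varying $\alpha$ above the threshold simply translates these half-spaces along a fixed $\R_{>0}$-direction. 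Patching the resulting linear deformation retract across apartments in an $M_\cD$-equivariant way and descending to the arithmetic quotient of the Levi produces a simplicial deformation retract of $X^{(\alpha),\cD}_{\bK,\bullet}$ onto $X^{(\alpha'),\cD}_{\bK,\bullet}$, giving the asserted isomorphism on cohomology.

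The hard part will be the last step: constructing the equivariant simplicial deformation retract in a way that is compatible with the flag decomposition simultaneously over all apartments and that descends cleanly to the arithmetic quotient. In particular, one has to control how the stabilizer of each flag acts on the cutoff half-space and verify that the retraction pieces glue; this is the technical content deferred to Section~\ref{sec:pf1b}.
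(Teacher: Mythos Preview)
Your Mayer--Vietoris reduction and rigidity lemma are correct and match the paper's setup exactly: the paper proves the same flag decomposition (Lemma~\ref{7_HNdiff}), obtaining $\wt{X}_{\bK,\bullet}^{(\alpha),\cD}=\coprod_{f\in\Flag_{\cD}}\wt{X}_{\bK,\bullet}^{(\alpha),\cD,f}$ for $\alpha>(d-1)\deg(\infty)$ and hence $X_{\bK,\bullet}^{(\alpha),\cD}\cong P_{\cD}(F)\backslash\wt{X}_{\bK,\bullet}^{(\alpha),\cD,f_0}$.

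Where you diverge is the final step, and here your proposal has a genuine gap. You aim to build an equivariant simplicial deformation retract of $X^{(\alpha),\cD}_{\bK,\bullet}$ onto $X^{(\alpha'),\cD}_{\bK,\bullet}$, arguing apartment by apartment. But on an apartment compatible with the flag, the condition $\Delta p_{\cF}(i)\ge\alpha$ reads $\min_{k\le i}\deg\cL_k-\max_{k>i}\deg\cL_k\ge\alpha$, which is an \emph{intersection} of half-spaces (one per pair $(k,k')$ with $k\le i<k'$), not a union, and not a single translated half-space. More seriously, the retraction you sketch lives on apartments, and the problem of gluing across apartments in a $P_{\cD}(F)$-equivariant way is exactly what you defer; nothing in your outline indicates how to do this, and the building is not a single apartment.

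The paper avoids this entirely. Instead of retracting one level onto the other, it shows that for every $g\in\GL_d(\A^\infty)$ the subcomplex $\wt{X}_{\bK,\bullet}^{(\alpha),\cD,f_0}\cap(\{g\bK\}\times\cBT_\bullet)$ is \emph{contractible} (Lemma~\ref{7_contract}), by an induction on $d$ in the style of Grayson: one projects to the Levi building for $\GL_{d-i}$ via $\cF\mapsto\cF/\cF_{(i)}$ and constructs an explicit null-homotopy using $\cF\mapsto\cF_{0,(i)}((n+\epsilon(\cF))\infty)+\cF$. Since isotropy groups are finite, this contractibility identifies $H^*(Y_{\bK,\bullet}^{(\alpha),\cD,g},\Q)$ with the group cohomology $H^*(P_{\cD}(F),\Map(P_{\cD}(\A^\infty)g/(g^{-1}P_{\cD}(\A^\infty)g\cap\bK),\Q))$, which is visibly independent of $\alpha$. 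The stability then follows from the spectral sequence (\ref{7_specseq}) with no gluing required. This is both shorter and what makes the subsequent identification with parabolic inductions (Proposition~\ref{7_prop2}) immediate.
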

We give a proof of Proposition~\ref{7_prop1b} in Section~\ref{sec:pf1b}.

\begin{lem}\label{7_betag}
For every $g \in \GL_d(\A^\infty)$
satisfying $g^{-1}\bK g \subset \GL_d(\wh{A})$,
there exists a real number $\beta_g \ge 0$
such that the isomorphism $\xi_g:X_{\bK,\bullet}
\xto{\cong} X_{g^{-1}\bK g,\bullet}$ sends
$X_{\bK,\bullet}^{(\alpha),\cD}$ to
$X_{g^{-1} \bK g,\bullet}^{(\alpha -\beta_g),\cD}
\subset X_{g^{-1}\bK g,\bullet}$
%
for all $\alpha > \beta_g$, for all open compact
$\bK \subset \GL_d(\A^\infty)$, and for 
all nonempty subset $\cD \subset \{1,\ldots,d-1 \}$.

\end{lem}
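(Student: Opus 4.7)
The plan is to translate the map $\xi_g$ into the $\cO_C$-module language of Section~\ref{sec:locfree} and bound the resulting change of Harder--Narasimhan polygons by a constant depending only on $g$. A vertex of $X_{\bK,\bullet}$ represented by $(h\bK,[L_\infty])$ corresponds to $\cF:=\cF[h,L_\infty]$, while its $\xi_g$-image in $X_{g^{-1}\bK g,\bullet}$ corresponds to $\cF':=\cF[hg,L_\infty]$. Both sheaves agree at $\infty$, so the task reduces to controlling $|\Delta p_\cF(i)-\Delta p_{\cF'}(i)|$ uniformly in $h$, $L_\infty$, and $\bK$; the vertex-wise definition of $X_{\bK,\bullet}^{(\alpha),\cD}$ then lets the vertex bound propagate to simplices.

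First I would choose a single nonzero $a\in A$ with $ag,\,ag^{-1}\in\mathrm{Mat}_d(\wh{A})$, which exists because $\A^\infty=\wh{A}\otimes_A F$, and which depends only on $g$. Writing $M=\wh{A}^{\oplus d}h^{-1}\cap F^{\oplus d}=\Gamma(\Spec A,\cF)$ and $M'=\wh{A}^{\oplus d}(hg)^{-1}\cap F^{\oplus d}=\Gamma(\Spec A,\cF')$, one reads off directly that $aM\subseteq M'$ and $aM'\subseteq M$. Next I would form the auxiliary $\cO_C$-submodules $\mathcal{G}=\cF\cap\cF'$ and $\mathcal{G}'=\cF+\cF'$ of $\eta_*F^{\oplus d}$, both locally free of rank $d$ (torsion-free coherent sheaves on the smooth curve $C$), which sandwich $\cF,\cF'$ as $\mathcal{G}\subseteq\cF,\cF'\subseteq\mathcal{G}'$ and agree with them at $\infty$. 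The cokernel $\mathcal{G}'/\mathcal{G}$ is torsion, supported away from $\infty$, and annihilated by $a$; at each closed $P\neq\infty$ its stalk is an $\cO_{C,P}/(a)$-module of rank $\le d$ and so of length at most $d\,v_P(a)$, whence
\[
\deg(\mathcal{G}')-\deg(\mathcal{G})\;\le\;d\sum_{P\neq\infty}v_P(a)\deg(P)\;=:\;N,
\]
with $N$ depending only on $g$.

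Applying Lemma~\ref{7_diffdeg} to $\mathcal{G}\subseteq\mathcal{G}'$ then gives $0\le p_{\mathcal{G}'}(i)-p_{\mathcal{G}}(i)\le N$ for $i=1,\ldots,d-1$, and since both $p_\cF$ and $p_{\cF'}$ lie between $p_\mathcal{G}$ and $p_{\mathcal{G}'}$ at each of these points---with the endpoint $i=0$ giving equality and the endpoint $i=d$ handled by $\deg\cF,\deg\cF'\in[\deg\mathcal{G},\deg\mathcal{G}']$---I obtain $|p_\cF(i)-p_{\cF'}(i)|\le N$ for all $i\in\{0,\ldots,d\}$. Using $\Delta p(i)=2p(i)-p(i-1)-p(i+1)$ this gives $|\Delta p_\cF(i)-\Delta p_{\cF'}(i)|\le 4N$ on $\{1,\ldots,d-1\}$, so the choice $\beta_g:=4N$ satisfies the claim. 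The main subtle point is precisely the uniformity statement: choosing $a$ purely in terms of $g$, and routing the comparison of HN polygons through the sheaves $\mathcal{G},\mathcal{G}'$ rather than attempting a direct comparison of $\cF$ and $\cF'$, is what yields a bound independent of $h$, $L_\infty$, and $\bK$.
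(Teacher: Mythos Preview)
Your proof is correct and follows essentially the same strategy as the paper: clear denominators of $g$ and $g^{-1}$ by an element of $\wh{A}\cap(\A^\infty)^\times$, convert this into a sandwich of rank-$d$ subsheaves with uniformly bounded degree gap, and invoke Lemma~\ref{7_diffdeg} to control $p_\cF - p_{\cF'}$ and hence $\Delta p_\cF - \Delta p_{\cF'}$. The only cosmetic difference is that the paper sandwiches $\cF'$ directly between the twists $\cF(-\divi(a))$ and $\cF(\divi(b))$, whereas you pass through $\cF\cap\cF'$ and $\cF+\cF'$; both routes yield the same uniform bound depending only on $g$.
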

\begin{proof}
Take two elements $a,b \in \A^{\infty \times}
\cap \wh{A}$ such that both $ag$ and
$bg^{-1}$ lie in $\GL_d(\A^\infty)\cap
\Mat_d(\wh{A})$. Then for any
$h \in \GL_d(\A^\infty)$ we have 
$a
\wh{A}^{\oplus d}h^{-1} 
\subset \wh{A}^{\oplus d}g^{-1} h^{-1} \subset b^{-1} \wh{A}^{\oplus d}h^{-1}$.
This implies that,
for any vertex $x \in X_{\bK,0}$,
if we take suitable representatives 
$\cF_x$, $\cF_{\xi_{g}(x)}$ of the equivalence classes
of locally free $\cO_C$-modules 
corresponding to $x$, $\xi_{g}(x)$, then
there exists a sequence of injections 
$\cF_x(-\divi(a)) \inj \cF_{\xi_g(x)}
\inj \cF_x(\divi(b))$. Applying Lemma~\ref{7_diffdeg},
we see that there exists a positive real number $m_g>0$
not depending on $x$ such that 
$|p_{\cF_x}(i) - p_{\cF_{\xi_{g}(x)}}(i)| < m_g$ for all $i$.
Hence the claim follows.
\end{proof}

Thus the group $\GL_d(\A^\infty)$ acts on 
$\varinjlim_{\bK} \varinjlim_{\alpha > 0} 
H^{*}(X^{(\alpha),\cD}_{\bK,\bullet},\Q)$
in such a way that the exact sequence (\ref{7_cohseq})
is $\GL_d(\A^\infty)$-equivariant.

We use a covering spectral sequence 
\begin{equation}\label{7_specseq}
E_1^{p,q} = \bigoplus_{\sharp \cD = p+1}
H^q(X^{(\alpha),\cD}_{\bK,\bullet},\Q)
\Rightarrow H^{p+q}(X^{(\alpha)}_{\bK,\bullet},\Q)
\end{equation}
with respect to
the covering $X_{\bK,\bullet}^{(\alpha)} = \bigcup_{1\le i \le d-1}
X_{\bK,\bullet}^{(\alpha),\{i\}}$ of $X_{\bK,\bullet}^{(\alpha)}$. 
For $\alpha' \ge \alpha>0$, the inclusion 
$X_{\bK,\bullet}^{(\alpha,\cD)} \to
X_{\bK,\bullet}^{(\alpha',\cD)}$
induces a morphism of spectral sequences.
Taking the inductive limit, we obtain the 
spectral sequence 
$$
E^{p,q}_1
= \bigoplus_{\sharp \cD = p+1}
\varinjlim_{\alpha} H^q(X^{(\alpha),\cD}_{\bK,\bullet},\Q)
\Rightarrow
\varinjlim_{\alpha} H^{p+q}(X^{(\alpha)}_{\bK,\bullet},\Q).
$$
For $g \in \GL_d(\A^\infty)$ satisfying
$g^{-1}\bK g \subset \GL_d(\wh{A})$, let $\beta_g$ be as
in Lemma~\ref{7_betag}. Then for $\alpha >\beta_g$ 
the isomorphism $\xi_g :X_{\bK,\bullet}\xto{\cong}
X_{g\bK g^{-1},\bullet}$ induces a homomorphism
from the spectral sequence (\ref{7_specseq}) for 
$X_{\bK,\bullet}^{(\alpha)}$ to that 
for $X_{\bK,\bullet}^{(\alpha -\beta_g)}$. 
Passing to the inductive limit with respect to $\alpha$
and then passing to the inductive limit with respect to 
$\bK$, we obtain the left action of the group 
$\GL_d(\A^\infty)$ on the spectral sequence
\begin{equation}\label{7_limspecseq}
E^{p,q}_1
= \bigoplus_{\sharp \cD = p+1}
\varinjlim_{\bK} 
\varinjlim_{\alpha} H^q(X^{(\alpha),\cD}_{\bK,\bullet},\Q)
\Rightarrow
\varinjlim_{\bK} 
\varinjlim_{\alpha} H^{p+q}(X^{(\alpha)}_{\bK,\bullet},\Q).
\end{equation}

For a subset $\cD$ of $\{1,\ldots,d-1\}$,
we define the algebraic
groups $P_{\cD}$, $N_{\cD}$ and
$M_{\cD}$ as follows. We write 
$\cD= \{i_1,\ldots,i_{r-1} \}$, with
$i_0=0 < i_1 < \cdots < i_{r-1} <i_r=d$ 
and set $d_j = i_j -i_{j-1}$ for
$j=1,\ldots,r$.
We define $P_{\cD}$, $N_{\cD}$ and
$M_{\cD}$ as the standard parabolic
subgroup of $\GL_d$ of type
$(d_1,\ldots,d_r)$, the unipotent radical
of $P_{\cD}$, and the quotient group $P_{\cD}/N_{\cD}$
respectively. We identify the group $M_{\cD}$ 
with $\GL_{d_1}\times\cdots \times \GL_{d_r}$.

\begin{prop}\label{7_prop2}
Let the notations be above.
Then as a smooth $\GL_d(\A^\infty)$-module,
$\varinjlim_{\bK} \varinjlim_{\alpha >0}
H^*(X_{\bK,\bullet}^{(\alpha),\cD},\Q)$ is
isomorphic to 
$$
\Ind_{P_{\cD}(\A^\infty)}^{\GL_d(\A^\infty)}
\bigotimes_{j=1}^r 
\varinjlim_{\bK_j \subset \GL_{d_j}(\wh{A})} 
H^*(X_{\GL_{d_j},\bK_j,\bullet},\Q),
$$
where the group $P_{\cD}(\A^\infty)$ acts on
${\displaystyle \bigotimes_{j=1}^r 
\varinjlim_{\bK_j\subset \GL_{d_j}(\wh{A})}} 
H^*(X_{\GL_{d_j},\bK_j,\bullet},\Q)$
via the quotient 
$P_{\cD}(\A^\infty) \to M_{\cD}(A^\infty)
= \prod_j \GL_{d_j}(\A^\infty)$,
and $\Ind_{P_{\cD}(\A^\infty)}^{\GL_d(\A^\infty)}$
denotes the parabolic induction 
unnormalized by the modulus function.
\end{prop}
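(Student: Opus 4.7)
The plan is to use the Harder--Narasimhan filtration to extract, from each simplex of $X_{\bK,\bullet}^{(\alpha),\cD}$, canonical data that amounts to a reduction of structure to the standard parabolic $P_\cD$, together with a tuple of simplices for the Levi factors. Concretely, for each vertex represented by a locally free $\cO_C$-module $\cF$ of rank $d$ with $\Delta p_\cF(i) \ge \alpha$ for all $i \in \cD$, the subsheaves $\cF_{(i)}$ of Section~\ref{sec:HN} yield a canonical filtration $0 \subset \cF_{(i_1)} \subset \cdots \subset \cF_{(i_{r-1})} \subset \cF$ whose successive quotients are locally free of ranks $d_1,\ldots,d_r$. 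The associated graded pieces (with level structures induced from that on $\cF$) give vertices of the respective $X_{\GL_{d_j},\bK_j,\bullet}$, while the filtration itself records a coset in $\GL_d(\A^\infty)/P_\cD(\A^\infty)$.

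\textbf{Compatibility along a simplex.} First I would verify that the Harder--Narasimhan filtration extends across a whole simplex. If $\cdots \inj \cF_1 \inj \cF_0 \inj \cF_{-1} \inj \cdots$ represents a simplex of $X_{\bK,\bullet}^{(\alpha),\cD}$, then between two vertices of the same simplex the polygons $p_{\cF_k}$ differ by at most a universal constant, as follows from Lemma~\ref{7_diffdeg} applied to the length-one quotients at $\infty$. Consequently, for $\alpha$ sufficiently large the set of indices with jump $\ge \alpha$ is the same $\cD$ for every vertex of a given simplex, and for any two vertices $\cF \supsetneq \cF'$ in the chain one has $\cF'_{(i)} = \cF_{(i)} \cap \cF'$ for each $i \in \cD$ (by uniqueness in the theory of Harder--Narasimhan filtration). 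This gives a simplex-wide filtration and hence a well-defined map from $X_{\bK,\bullet}^{(\alpha),\cD}$ to a disjoint union indexed by the isomorphism classes of $P_\cD$-reductions.

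\textbf{Fibration and induction.} Next I would stratify $X_{\bK,\bullet}^{(\alpha),\cD}$ by the class of the filtration and interpret the resulting map as a fibration whose base, after passing to the inductive limit in $\bK$, is the smooth $\GL_d(\A^\infty)$-set $P_\cD(F) \backslash \GL_d(\A^\infty)$, and whose fiber over a coset $gP_\cD(\A^\infty)$ is canonically identified with $\prod_{j=1}^r X_{\GL_{d_j},\bK_j(g),\bullet}$ for suitable open compact subgroups $\bK_j(g) \subset \GL_{d_j}(\wh{A})$. Computing cohomology of the fibers by the K\"unneth formula and unwinding the definition of unnormalized parabolic induction (smooth functions on $P_\cD(F)\backslash\GL_d(\A^\infty)$ valued in the fiber module) then yields
\[
\varinjlim_\bK \varinjlim_\alpha H^*(X_{\bK,\bullet}^{(\alpha),\cD},\Q)
\cong
\Ind_{P_\cD(\A^\infty)}^{\GL_d(\A^\infty)}
\bigotimes_{j=1}^r
\varinjlim_{\bK_j} H^*(X_{\GL_{d_j},\bK_j,\bullet},\Q),
\]
no modulus character intervening because we are on the level of discrete cosets.

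\textbf{Main obstacle.} The principal technical difficulty lies in the compatibility step: one must show that, for $\alpha$ larger than an explicit constant depending only on $d$ and $\deg(\infty)$, the Harder--Narasimhan subsheaves of every vertex of a given simplex assemble into a single simplex-wide filtration. This reduces to a polygon estimate via Lemma~\ref{7_diffdeg} combined with the uniqueness of the destabilizing subsheaf at each index in $\cD$. A secondary bookkeeping point is that the level $\bK$-structure on $\cF$ must be shown to induce canonical level structures on each graded piece (independent of splittings), and that the compact subgroups $\bK_j(g)$ vary with $g$ in a way that disappears upon passing to the inductive limit in $\bK$; both are routine once a base $I$ is chosen so that $\bK_I^\infty \subset \bK$ and $\bK_j(g)$ is taken to be the natural image in $\GL_{d_j}(\wh{A})$ of $g^{-1}\bK g \cap P_\cD(\A^\infty)$.
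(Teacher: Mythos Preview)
Your overall plan---extract the Harder--Narasimhan filtration to get a $P_{\cD}$-reduction, then interpret the result as a parabolic induction from the Levi---is the paper's strategy, and your ``compatibility along a simplex'' step is correct (it is exactly Lemma~\ref{7_HNdiff}). But there is a genuine gap in the ``fibration and induction'' step: the fibre over a coset $gP_{\cD}(\A^\infty)$ is \emph{not} the product $\prod_j X_{\GL_{d_j},\bK_j(g),\bullet}$, and this is where the real work lies, not in the polygon estimate you flag as the main obstacle.

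The associated-graded map you describe is neither injective nor simplicial. On the adelic side, the double coset space $P_{\cD}(F)\backslash P_{\cD}(\A^\infty)/(P_{\cD}(\A^\infty)\cap\bK)$ differs from $\prod_j \GL_{d_j}(F)\backslash\GL_{d_j}(\A^\infty)/\bK_j$ by unipotent directions coming from $N_{\cD}$; on the $\infty$-side, two lattice chains with the same graded pieces can differ by non-split extension data, and the map even collapses distinct vertices of a single simplex whenever their $j$-th graded pieces agree (so it is only continuous on geometric realisations). K\"unneth therefore does not apply. The paper handles this in two steps that your outline omits. First, Lemma~\ref{7_contract} proves, by an inductive argument on $d$, that each connected component of $\wt{Y}_{\bK,\bullet}^{(\alpha),\cD}$ is contractible, so the cohomology of the fibre reduces to group cohomology $H^*(P_{\cD}(F),\Map(\pi_0,\Q))$. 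Second, one needs $H^q(N_{\cD}(F),\,\cdot\,)=0$ for $q\ge 1$ (using that $N_{\cD}(F)$ is a union of finite $p$-groups and the coefficients are $\Q$-vector spaces) together with the density of $N_{\cD}(F)$ in $N_{\cD}(\A^\infty)$ to identify the $H^0$ with the Levi side; this is what allows the passage from $P_{\cD}(F)$-cohomology to $M_{\cD}(F)$-cohomology in the diagram~(\ref{7_CD}). Without these two ingredients your fibration picture does not yield the claimed isomorphism.
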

The proof will be given in Section~\ref{sec:pf2}.

\subsection{Proof of Theorem~\ref{7_prop1}}
\label{sec:pfthm}
Here we give a proof of Theorem~\ref{7_prop1}
assuming Propositions~\ref{7_prop1b} and~\ref{7_prop2}.
\begin{proof}[Proof of Theorem~\ref{7_prop1}]
Let $\bK, \bK' \subset \GL_d(\A^\infty)$ be two
compact open subgroups with $\bK' \subset \bK$.
The pull-back morphism from the cochain complex
of $X_{\bK,\bullet}$ to that of $X_{\bK',\bullet}$
preserves the cochains with finite supports.
Thus we have pull-back homomorphisms 
$H^*_c(X_{\bK,\bullet},\Q) \to H^*_c(X_{\bK',\bullet},\Q)$
which is compatible with
the usual pull-back homomorphism 
$H^*(X_{\bK,\bullet},\Q) \to H^*(X_{\bK',\bullet},\Q)$.
For an abelian group $M$, we let
$H^*(X_{\lim,\bullet},M)=H^*(X_{\GL_d,\lim,\bullet},M)$ and
$H_c^*(X_{\lim,\bullet},M)=H_c^*(X_{\GL_d,\lim,\bullet},M)$
denote the
inductive limits
$\varinjlim_{\bK}H^*(X_{\bK,\bullet},M)$ and
$\varinjlim_{\bK}H_c^*(X_{\bK,\bullet},M)$,
respectively.
If $M$ is a $\Q$-vector space, then
for each compact open subgroup 
$\bK \subset \GL_d(\A^\infty)$,
the homomorphism
$H^*(X_{\bK,\bullet},M) \to H^*(X_{\lim,\bullet},M)$
is injective and its image is equal to the
$\bK$-invariant part $H^*(X_{\lim,\bullet},M)^{\bK}$
of $H^*(X_{\lim,\bullet},M)$.
Similar statement holds for $H_c^*$.
It follows from Lemma~\ref{7_lem1} that
the inductive limits $H^{d-1}(X_{\lim,\bullet},\Q)$
and $H_c^{d-1}(X_{\lim,\bullet},\Q)$ are admissible
$\GL_d(\A^\infty)$-modules, and are isomorphic to
the contragradient of $H_{d-1}(X_{\lim,\bullet},\Q)$
and $H_{d-1}^\mathrm{BM}(X_{\lim,\bullet},\Q)$,
respectively.
Since $\St_d$ is self-contragradient,
it follows from the compatibility of
the normalized parabolic inductions with taking contragradient
that it suffice to prove that any irreducible subquotient of
$H^{d-1}_c(X_{\lim,\bullet},\C)$
satisfies the properties in the statement of Theorem~\ref{7_prop1}.
Let $\pi$ be an irreducible subquotient of
$H^{d-1}_c(X_{\lim,\bullet},\C)$.
Then Proposition~\ref{7_prop2} combined with 
the spectral sequence (\ref{7_limspecseq}) shows that
there exists a subset $\cD \subset \{1,\ldots,d-1\}$
such that $\pi^\infty$ is isomorphic to a subquotient of
$\Ind_{P_{\cD}(\A^\infty)}^{GL_d(\A^\infty)}
\bigotimes_{j=1}^r \varinjlim_{\bK_j}
H^{d_j -1}(X_{\GL_{d_j},\bK_j,\bullet},\C)$.
Here $r=\sharp \cD +1$, and $d_1,\ldots,d_{r} \ge 1$ 
are the integers satisfying $\cD=\{d_1,d_1+d_2,\ldots,
d_1+\cdots+d_{r-1} \}$ and $d_1 +\cdots + d_{r} =d$.
By Proposition~\ref{prop:66_3}, $\pi^\infty$ is isomorphic 
to a subquotient of the non-$\infty$-component
of the induced representation from $P_{\cD}(\A)$ to
$\GL_d(\A)$ of an irreducible cuspidal automorphic representation
$\pi_1 \otimes \cdots \otimes \pi_r$ of $M_{\cD}(\A)$ whose component 
at $\infty$ is isomorphic to
the tensor product the Steinberg representations. 

It remains to prove the claim of the multiplicity.
The Ramanujan-Petersson conjecture proved by Lafforgue
shows that each place $v$ of $F$, the representation $\pi_{i,v}$
is tempered. Hence for almost all places $v$ of $F$,
the representation $\pi_v$ of $\GL_d(F_v)$ is unramified
and its associated Satake parameters $\alpha_{v,1},\cdots ,\alpha_{v,d}$
have the following property: for each $i$ with $1 \le i \le r$,
exactly $d_i$ parameters of $\alpha_{v,1},\cdots ,\alpha_{v,d}$
have the complex absolute value $q_v^{a_i/2}$ where
$q_v$ denotes the cardinality of the residue field at $v$
and $a_i = \sum_{i<j\le r} d_j - \sum_{1 \le j <i} d_j$.
This shows that the subset $\cD$ is uniquely determined by $\pi$.
It follows from the multiplicity one theorem and the strong multiplicity
one theorem that the
cuspidal automorphic representation 
$\pi_1 \otimes \cdots \otimes \pi_r$ of 
$M_{\cD}(\A)$
is also uniquely determined by $\pi$. 
Hence it suffices to show that
the representation $\Ind_{P_{\cD}(F_v)}^{\GL_d(F_v)} 
\pi_{1,v} \otimes \cdots \otimes \pi_{r,v}$ of $\GL_d(F)$ is of
multiplicity free for every place $v$ of $F$.
For $1 \le i \le r$, let $\Delta_i$ denote the multiset of
segments corresponding to the representation 
$\pi_{i,v}\otimes |\det(\ )|_v^{a_i/2}$
in the sense of \cite{Zelevinsky}. We denote by $\Delta_i^t$
the Zelevinski dual of $\Delta_i$.
Let $i_1, i_2$ be integers with $1 \le i_1 < i_2 \le r$
and suppose that there exist a segment in $\Delta_{i_1}^t$ 
and a segment in $\Delta_{i_2}^t$ which are linked.
Since $\pi_{i_1,v}$ and $\pi_{i_2,v}$ are tempered,
it follows that $i_2 = i_1 +1$ and that there exists a character 
$\chi$ of $F_v^\times$ such that both $\pi_{i_1,v} \otimes \chi$
and $\pi_{i_2,v} \otimes \chi$ are the Steinberg representations.
In this case the multiset $\Delta_{i_j}^t$ consists of
a single segment for $j=1,2$ and the
the unique segment in $\Delta_{i_1}^t$ and the
unique segment in $\Delta_{i_2}^t$ are juxtaposed.
Thus the claim is obtained by applying the formula in 
\cite[9.13, Proposition, p.201]{Zelevinsky}.
\end{proof}

\subsection{Proof of Proposition~\ref{7_prop1b}}
\label{sec:pf1b}
We need some preparation.
\begin{lem}\label{7_HNdiff}
Let $\cF$ be a locally free $\cO_C$-module
of rank $d$. Let $\cF' \subset \cF$ be
a n $\cO_C$-submodule of the same rank. 
Suppose that 
$\Delta p_{\cF}(i) > \deg(\cF) 
-\deg(\cF')$. Then we have 
$\cF'_{(i)} = \cF_{(i)} \cap \cF'$.
\end{lem}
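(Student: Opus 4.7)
My plan is to identify $\mathcal{G} := \cF_{(i)} \cap \cF'$ as a step of the Harder--Narasimhan filtration of $\cF'$; this will simultaneously give $i \in \Supp(\Delta p_{\cF'})$ (tacitly required for $\cF'_{(i)}$ to be defined) and the desired equality $\mathcal{G} = \cF'_{(i)}$. Set $\delta := \deg(\cF) - \deg(\cF')$. First I will verify that $\mathcal{G}$ is a saturated rank-$i$ subsheaf of $\cF'$: at the generic point $\mathcal{G}_\eta = \cF_{(i),\eta}$ gives the rank, and the natural injection $\cF'/\mathcal{G} \hookrightarrow \cF/\cF_{(i)}$ into a locally free sheaf (locally free by the HN filtration of $\cF$, since $\cF_{(i)}$ is saturated) shows $\mathcal{G}$ is saturated. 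Comparing degrees through this injection of locally free sheaves of rank $d-i$, $\deg(\cF'/\mathcal{G}) \le \deg(\cF/\cF_{(i)}) = \deg(\cF) - p_{\cF}(i)$, so $e := p_{\cF}(i) - \deg(\mathcal{G}) \le \delta$ and $\cF_{(i)}/\mathcal{G}$ is torsion of length $e$.

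Let $j$ be such that $i$ is the $j$-th break of the HN polygon of $\cF$, so $p_{\cF}$ has slope $\mu_j$ on $[i_{j-1},i]$ and slope $\mu_{j+1}$ on $[i,i_{j+1}]$, whence $\Delta p_{\cF}(i) = \mu_j - \mu_{j+1}$. I will next bound the extremal HN slopes of $\mathcal{G}$ and $\cF'/\mathcal{G}$. For any non-trivial quotient $\mathcal{G} \twoheadrightarrow \mathcal{Q}$ with kernel $\mathcal{K}$, the short exact sequence $0 \to \mathcal{Q} \to \cF_{(i)}/\mathcal{K} \to \cF_{(i)}/\mathcal{G} \to 0$, whose rightmost term is torsion of length $e$, should yield
\[
\mu(\mathcal{Q}) \;=\; \mu(\cF_{(i)}/\mathcal{K}) - \frac{e}{\rank(\mathcal{Q})} \;\ge\; \mu_j - e \;\ge\; \mu_j - \delta,
\]
because $\cF_{(i)}/\mathcal{K}$ is a non-trivial quotient of $\cF_{(i)}$, whose minimal HN slope is $\mu_j$ (so every quotient of $\cF_{(i)}$ has slope $\ge \mu_j$). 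Hence the minimal HN slope of $\mathcal{G}$ is at least $\mu_j - \delta$. Dually, since $\cF'/\mathcal{G}$ embeds into $\cF/\cF_{(i)}$, whose maximal HN slope is $\mu_{j+1}$, the maximal HN slope of $\cF'/\mathcal{G}$ is at most $\mu_{j+1}$.

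The hypothesis $\Delta p_{\cF}(i) > \delta$ reads $\mu_j - \delta > \mu_{j+1}$, so the minimal HN slope of $\mathcal{G}$ will strictly exceed the maximal HN slope of $\cF'/\mathcal{G}$. I then plan to invoke the uniqueness characterization of the Harder--Narasimhan filtration: the concatenation of the HN filtration of $\mathcal{G}$ with the preimage in $\cF'$ of the HN filtration of $\cF'/\mathcal{G}$ produces a filtration of $\cF'$ by locally free subsheaves whose successive semistable quotients have strictly decreasing slopes, and must therefore be the HN filtration of $\cF'$. Since $\mathcal{G}$ appears as the rank-$i$ step in this filtration (with $0 < i < d$), $i$ lies in $\Supp(\Delta p_{\cF'})$ and $\cF'_{(i)} = \mathcal{G}$. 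The main obstacle is the slope estimate in the second paragraph, where the interplay between the torsion length $e \le \delta$ and the polygon gap $\mu_j - \mu_{j+1}$ makes the precise hypothesis $\Delta p_{\cF}(i) > \delta$ exactly what is needed to open a strictly positive slope gap at $\mathcal{G}$.
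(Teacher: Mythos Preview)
Your argument is correct and takes a genuinely different route from the paper's. The paper argues by contradiction: assuming $\cF'_{(i)} \not\subset \cF_{(i)}$, it sets $r = \rank(\cF'_{(i)} \cap \cF_{(i)}) < i$ and uses the convexity of the polygon $p_{\cF}$ to bound
\[
\deg(\cF'_{(i)}) \le p_{\cF}(r) + p_{\cF/\cF_{(i)}}(i-r) \le p_{\cF}(i) - (i-r)\,\Delta p_{\cF}(i) < p_{\cF}(i) - \delta,
\]
while Lemma~\ref{7_diffdeg} applied to $\cF_{(i)} \cap \cF' \subset \cF_{(i)}$ forces $\deg(\cF'_{(i)}) \ge \deg(\cF_{(i)} \cap \cF') \ge p_{\cF}(i) - \delta$. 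Once $\cF'_{(i)} \subset \cF_{(i)}$, equality with $\cF_{(i)} \cap \cF'$ follows because both are saturated rank-$i$ subsheaves of $\cF'$.

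You instead work directly with $\mathcal{G} = \cF_{(i)} \cap \cF'$ and produce a strict slope gap between $\mathcal{G}$ and $\cF'/\mathcal{G}$, then invoke uniqueness of the Harder--Narasimhan filtration. This is a bit longer but more constructive: it yields as a by-product that $i \in \Supp(\Delta p_{\cF'})$, which the paper's formulation tacitly presupposes (the symbol $\cF'_{(i)}$ is only defined under that condition). The paper's proof is shorter and stays at the level of the polygon functions $p_{\cF}$, $p_{\cF/\cF_{(i)}}$ without ever naming the individual HN slopes, whereas your approach makes the underlying slope mechanism $\mu_j - \delta > \mu_{j+1}$ explicit. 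Both ultimately rely on the same inequality $e \le \delta$ (your bound on the torsion length of $\cF_{(i)}/\mathcal{G}$ is exactly the paper's appeal to Lemma~\ref{7_diffdeg}).
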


\begin{proof}
It suffices to prove that
$\cF'_{(i)}\subset \cF_{(i)}$.
Assume otherwise. Let us consider
the short exact sequence
$$
0\to \cF'_{(i)}\cap
\cF_{(i)} \to \cF'_{(i)} \to
\cF'_{(i)}/(\cF'_{(i)}\cap
\cF_{(i)}) \to 0
$$
Let $r$ denote the rank of
$\cF'_{(i)}\cap \cF_{(i)}$. 
By assumption, $r$ is strictly
smaller than $i$. Hence
$$
\begin{array}{rl}
\deg(\cF'_{(i)}) & =
\deg(\cF'_{(i)}\cap
\cF_{(i)}) + 
\deg(\cF'_{(i)}/(\cF'_{(i)}\cap
\cF_{(i)})) \\
& \le p_{\cF}(r) + 
p_{\cF/\cF_{(i)}}(i-r) \\
& \le p_{\cF}(i) - (i-r)(p_{\cF}(i)-p_{\cF}(i-1))
+ (i-r)(p_{\cF}(i+1) -p_{\cF}(i)) \\
& = \deg(\cF_{(i)}) - (i-r) \Delta p_{\cF}(i) \\
& < \deg(\cF_{(i)}) - (\deg(\cF)-\deg(\cF')).
\end{array}
$$
On the other hand, Lemma~\ref{7_diffdeg}
shows that $\deg(\cF_{(i)}\cap \cF')\ge
\deg(\cF_{(i)}) - (\deg(\cF)-\deg(\cF'))$.
This is a contradiction.
\end{proof}

Let $\Flag_{\cD}$ denote the set
$$
\Flag_{\cD} = \{ f=
[0 \subset V_1 \subset \cdots \subset
V_{r-1} \subset F^{\oplus d}] \ |\ \dim(V_j)=i_j \}
$$
of flags in $F^{\oplus d}$.

Let $\wt{X}_{\bK,\bullet}^{(\alpha),\cD}$ denote
the inverse image of $X_{\bK,\bullet}^{(\alpha),\cD}$
by the morphism $\wt{X}_{\bK,\bullet}
\to X_{\bK,\bullet}$. 
For $f = [0 \subset V_1 \subset \cdots \subset
V_{r-1} \subset F^{\oplus d}]\in \Flag_{\cD}$, let 
$\wt{X}_{\bK,\bullet}^{(\alpha),\cD,f}$ 
denote the simplicial subcomplex of 
$\wt{X}_{\bK,\bullet}^{(\alpha),\cD}$
consisting of the simplices in $\wt{X}_{\bK,\bullet}$ 
whose representative
$\cdots \supsetneqq \cF_{-1} \supsetneqq 
\cF_0 \supsetneqq \cF_1
\supsetneqq \cdots$ satisfies
$\cF_{l,(i_j)} = \cF_l \cap \eta_* V_{i_j}$
for every $l\in\Z$, $j=1,\ldots,r-1$.
Lemma~\ref{7_HNdiff} implies that, for 
$\alpha > (d-1)\deg(\infty)$,
$\wt{X}_{\bK,\bullet}^{(\alpha),\cD}$ is decomposed
into a disjoint union $\wt{X}_{\bK,\bullet}^{(\alpha),\cD}
= \coprod_{f \in \Flag_{\cD}}
\wt{X}_{\bK,\bullet}^{(\alpha),\cD,f}$.
An argument similar to that in the proof of
Lemma~\ref{7_betag} shows that, 
for each $g \in \GL_d(\A^\infty)$ satisfying
$g^{-1}\bK g \subset \GL_d(\wh{A})$, there exists
a real number $\beta'_g > \beta_g$ such that
the isomorphism $\wt{\xi}_g$
sends $\wt{X}_{\bK,\bullet}^{(\alpha),\cD,f}$ to
$\wt{X}_{g \bK g^{-1},\bullet}^{(\alpha-\beta_g),\cD,f}
\subset \wt{X}_{g \bK g^{-1},\bullet}$
for $\alpha > \beta'_g$ and for any $f \in \Flag_{\cD}$.

For $\gamma \in \GL_d(F)$, the action of
$\gamma$ on $\wt{X}_{\bK,\bullet}$
sends $\wt{X}_{\bK,\bullet}^{(\alpha),\cD,f}$ 
bijectively to 
$\wt{X}_{\bK,\bullet}^{(\alpha),\cD,\gamma f}$.
Let $f_0 = [0\ \subset F^{\oplus i_1}\oplus 
\{0\}^{\oplus d-i_1} 
\subset \cdots \subset F^{\oplus i_{r-1}}
\oplus \{0\}^{\oplus d -i_{r-1}}
\subset F^{\oplus d}]\in \Flag_{\cD}$ be
the standard flag. The group $\GL_d(F)$
acts transitively on $\Flag_{\cD}$ and its
stabilizer at $f_0$ equals $P_{\cD}(F)$.
Hence for $\alpha > (d-1)\deg(\infty)$,

$X_{\bK,\bullet}^{(\alpha),\cD}$ is
isomorphic to the quotient 
$P_{\cD}(F)\backslash 
\wt{X}_{\bK,\bullet}^{(\alpha),\cD,f_0}$.

For $g \in \GL_d(\A^\infty)$, we set 
$$
\wt{Y}_{\bK,\bullet}^{(\alpha),\cD,g} =
\wt{X}_{\bK,\bullet}^{(\alpha),\cD,f_0}
\cap (P_{\cD}(\A^\infty)g/(g^{-1}P_{\cD}(\A^\infty)g \cap \bK)
\times \cBT_{\bullet})
$$
and $Y_{\bK,\bullet}^{(\alpha),\cD,g} = 
P_{\cD}(F)\backslash \wt{Y}_{\bK,\bullet}^{(\alpha),\cD,g}$.
We omit the superscript $g$ on 
$\wt{Y}_{\bK,\bullet}^{(\alpha),\cD,g}$
and $Y_{\bK,\bullet}^{(\alpha),\cD,g}$ if $g=1$.
If we take a complete set $T \subset \GL_d(\A^\infty)$ 
of representatives of $P_{\cD}(\A^\infty)\backslash \GL_d(\A^\infty)$, 
then we have $\wt{X}_{\bK,\bullet}^{(\alpha),\cD,f_0} = 
\coprod_{g \in T} \wt{Y}_{\bK,\bullet}^{(\alpha),\cD,g}$.
For each $g \in \GL_d(\A^\infty)$ satisfying
$g^{-1}\bK g \subset \GL_d(\wh{A})$, there exists
a real number $\beta'_g > \beta_g$ such that
the isomorphism $\wt{\xi}_g$
sends $\wt{Y}_{\bK,\bullet}^{(\alpha),\cD,g'}$ 
to $\wt{Y}_{g \bK g^{-1},\bullet}^{(\alpha-\beta_g),\cD,g'g}
\subset \wt{X}_{g \bK g^{-1},\bullet}$
for $\alpha > \beta'_g$, for any $f \in \Flag_{\cD}$
and for any $g' \in \GL_d(\A^\infty)$.
Hence, as a smooth $\GL_d(\A^\infty)$-module,
$\varinjlim_{\bK} \varinjlim_{\alpha}
H^*(X_{\bK,\bullet}^{(\alpha),\cD},\Q)$ is isomorphic to 
$\Ind_{P_{\cD}(\A^\infty)}^{\GL_d(\A^\infty)}
\varinjlim_{\bK} \varinjlim_{\alpha} 
H^* (Y_{\bK,\bullet}^{(\alpha),\cD},\Q)$.
\begin{lem}\label{7_contract}
For any $g \in \GL_d(\A^\infty)$, 
the simplicial complex
$\wt{X}_{\bK,\bullet}^{(\alpha),\cD,f_0}
\cap (\{ g\bK \}\times \cBT_{\bullet})$ is 
non-empty and contractible.
\end{lem}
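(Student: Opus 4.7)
The plan is to identify the full subcomplex $\Xi := \wt{X}_{\bK,\bullet}^{(\alpha),\cD,f_0} \cap (\{g\bK\} \times \cBT_\bullet)$ of $\cBT_\bullet$ with a convex region inside the sub-building of the Levi $M_\cD \subset P_\cD$, and then to exhibit an explicit vertex together with a deformation retraction onto this sub-building.

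First I fix an $F$-linear splitting $F^{\oplus d} = V^{(1)} \oplus \cdots \oplus V^{(r)}$ refining the flag $f_0$, so that $V_{i_k} = V^{(1)} \oplus \cdots \oplus V^{(k)}$, and set $V^{(k)}_\infty := V^{(k)} \otimes_F F_\infty$. Call a lattice $L \subset V_\infty = F_\infty^{\oplus d}$ \emph{split} if $L = \bigoplus_k (L \cap V^{(k)}_\infty)$; the full subcomplex $\cBT^{\mathrm{spl}}_\bullet \subset \cBT_\bullet$ spanned by classes of split lattices is canonically isomorphic to the product of the Bruhat--Tits buildings of $\GL(V^{(k)})$ for $k=1,\dots,r$, and is therefore contractible. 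For non-emptiness, I would take $L^{(0)} := \bigoplus_k \varpi_\infty^{a_k} L^{(k)}_0$ for any fixed reference lattices $L^{(k)}_0 \subset V^{(k)}_\infty$ and integers $a_1 < \cdots < a_r$ with wide successive gaps. An application of Lemma~\ref{7_diffdeg} bounds the difference $\deg(\cF[g,L^{(0)}] \cap \eta_* V_{i_k}) - \deg(\cF[g,L^{(0)}] \cap \eta_* W)$ for any competing rank-$i_k$ subspace $W \neq V_{i_k}$ in terms of the gaps $a_{k+1}-a_k$; taking these gaps large enough (depending on $g$ and $\alpha$) forces the $V_{i_k}$-sub-bundles to dominate by at least $\alpha$, placing $[L^{(0)}]$ in $\Xi \cap \cBT^{\mathrm{spl}}_\bullet$.

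For contractibility I construct a deformation retraction $\Xi \to \Xi \cap \cBT^{\mathrm{spl}}_\bullet$ and show that the target is contractible. To each vertex $[L] \in \Xi$ I associate the split vertex $[L^{\mathrm{gr}}]$, where $L^{\mathrm{gr}} := \bigoplus_k (L \cap V_{i_k,\infty})/(L \cap V_{i_{k-1},\infty})$ is identified as a lattice in $V_\infty$ via the splitting. A common apartment through $[L]$ and $[L^{\mathrm{gr}}]$ is provided by a basis of $V_\infty$ adapted to the splitting that also diagonalizes $L$, and in this apartment the degrees $\deg(\cF[g,\cdot] \cap \eta_* V_{i_k})$ along the flag are affine functions of the exponents. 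The associated-graded construction preserves the volumes $\det(L \cap V_{i_k,\infty})$, hence preserves these flag degrees and the $\alpha$-jump condition. The affine interpolation from $[L]$ to $[L^{\mathrm{gr}}]$ in the apartment then provides the homotopy, and one checks that the HN-maximality of $\cF \cap \eta_* V_{i_k}$ persists along the path. Finally, $\Xi \cap \cBT^{\mathrm{spl}}_\bullet$ is cut out of the product of the factor buildings by the $\alpha$-jump inequalities, which in split coordinates are affine on each factor apartment; the resulting region is convex and contains the star of $[L^{(0)}]$, hence is contractible.

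The main obstacle is verifying that the affine path from $[L]$ to $[L^{\mathrm{gr}}]$ stays inside $\Xi$: while the $\alpha$-jump condition is preserved thanks to the volume invariance along the flag, the HN-maximality against competitors $W \neq V_{i_k}$ may degrade in the interior, since $\deg(\cF[g,\cdot] \cap \eta_* W)$ is only piecewise-affine concave in the apartment, so the function $\deg(V_{i_k})-\deg(W)$ is piecewise-convex and positivity at the endpoints does not directly force positivity throughout. The remedy is either to reroute the homotopy through $[L^{(0)}]$, where by the non-emptiness construction the margin can be made arbitrarily large, or to quantitatively refine Lemma~\ref{7_diffdeg} so as to bound the interior dips by the $\alpha$-margin available at the endpoints.
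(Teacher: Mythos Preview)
Your approach differs substantially from the paper's and has gaps more serious than the one you flag. The paper argues by induction on $d$, following Grayson \cite{Gra}: with $i = \min\cD$, one projects $\Xi$ to the analogous complex for $\GL_{d-i}$ via $\cF \mapsto \cF/\cF_{(i)}$, builds a section by direct-summing with a suitable twist of a fixed $\cF_{0,(i)}$, and then uses the maps $G_n(\cF) = \cF + \cF_{0,(i)}((n+\epsilon(\cF))\infty)$ together with Grayson's argument to show that any map from a compact space into $|\Xi|$ is homotopic to one factoring through the (inductively contractible) base.

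The central problem with your retraction is that the apartment you invoke does not exist in general: a basis of $V_\infty$ simultaneously adapted to the splitting $\bigoplus_k V^{(k)}$ and diagonalizing $L$ exists only when $L$ is already split, since $L = \bigoplus_i \varpi^{n_i}\cO_\infty e_i$ with each $e_i \in V^{(k(i))}_\infty$ forces $L = \bigoplus_k (L \cap V^{(k)}_\infty)$. Any two vertices do lie in some common apartment, but in one not adapted to the flag the degrees $\deg(\cF[g,\cdot]\cap\eta_*V_{i_j})$ are no longer affine in the apartment coordinates, so your preservation argument for the $\alpha$-jump collapses. The same non-affineness undermines the claimed convexity of $\Xi \cap \cBT^{\mathrm{spl}}_\bullet$: membership in $\Xi$ requires both $\Delta p_\cF(i_j)\ge\alpha$, which involves $p_\cF(i_j\pm1)$ (suprema over all subsheaves of those ranks, hence only piecewise-linear), and the HN-maximality $\cF_{(i_j)}=\cF\cap\eta_*V_{i_j}$, a further family of non-affine inequalities indexed by competing subspaces $W$. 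Neither proposed remedy escapes this: rerouting through $[L^{(0)}]$ needs a path from $[L]$ to $[L^{(0)}]$ inside $\Xi$, facing the identical obstruction, and Lemma~\ref{7_diffdeg} applies only to nested sheaves $\cF'\subset\cF$ of equal rank, whereas the interpolated lattices along a geodesic are not comparable by inclusion to the endpoints.
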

\begin{proof}
Since $\wt{X}_{\bK,\bullet}^{(\alpha),\cD,f_0}
\cap (\{ g\bK \}\times \cBT_{\bullet})$
is isomorphic to $\wt{X}_{\GL_d(\wh{A}),\bullet}^{(\alpha),\cD,f_0}
\cap (\{ g\GL_d(\wh{A}) \}\times \cBT_{\bullet})$,
we may assume that $\bK = \GL_d(\wh{A})$.
We set $X=\wt{X}_{\GL_d,\GL_d(\wh{A}),\bullet}^{(\alpha),\cD,f_0}
\cap (\{ g\GL_d(\wh{A}) \}\times \cBT_{\GL_d,\bullet})$.

We proceed by induction on $d$, 
in a manner similar to that in the proof of Theorem~4.1 of 
\cite{Gra}.
Let $i\in \cD$ be the minimal element and set $d'=d-i$.
We define the subset $\cD' \subset \{1,\ldots,d'-1 \}$
as $\cD' = \{ i' -i\ |\ i' \in \cD, i'\neq i \}$.
We define $f'_0 \in \Flag_{\cD'}$ as the image of
the flag $f_0$ in $F^{\oplus d}$ with respect to the
the projection $F^{\oplus d} \surj 
F^{\oplus d}/(F^{\oplus i}\oplus \{0\}^{\oplus d'})
\cong F^{\oplus d'}$.
Take an element $g' \in \GL_{d'}(\A^\infty)$ such that
the quotient $\wh{A}^{\oplus d} g^{-1}/
(\wh{A}^{\oplus d}g^{-1}\cap 
(\A^{\infty \oplus i} \oplus \{0\}^{\oplus d'}))$
equals $\wh{A}^{\oplus d'}g^{\prime -1}$ as
an $\wh{A}$-lattice of $\A^{\infty \oplus d'}$.
We set $X'=\wt{X}_{\GL_{d'}, 
\GL_{d'}(\wh{A}),\bullet}^{(\alpha),\cD',f'_0}
\cap (\{ g' \GL_{d'}(\wh{A}) \}\times \cBT_{\GL_{d'},\bullet})$
if $\cD'$ is non-empty. Otherwise we set $X'
= \wt{X}_{\GL_{d'}, \GL_{d'}(\wh{A}),\bullet}
\cap (\{ g' \GL_{d'}(\wh{A}) \}\times \cBT_{\GL_{d'},\bullet})$.
By induction hypothesis, $|X'|$ is contractible.
There is a canonical morphism
$h:X \to X'$ which sends an $\cO_C$-submodule $\cF[g,L_\infty]$ of
$\eta_* F^{\oplus d}$ to the 
$\cO_C$-submodule $\cF[g,L_\infty]/\cF[g,L_\infty]_{(i)}$
of $\eta_* F^{\oplus d'}$.
Let $\epsilon : \Vertex(X) \to \Z$ and
$\epsilon' : \Vertex(X') \to \Z$ denote the maps which
send a locally free $\cO_C$-module $\cF$ to the integer
$[p_{\cF}(1)/\deg(\infty)]$.
We fix an
$\cO_C$-submodule $\cF_0$ of $\eta_* F^{\oplus d}$
whose equivalence class belongs to $X$.
By twisting $\cF_0$ by some power of $\cO_C(\infty)$
if necessary, we may assume that 
$p_{\cF_0}(i) - p_{\cF_0}(i-1) > \alpha$.
We fix a splitting $\cF_0 = \cF_{0,(i)}\oplus \cF'_0$.
This splitting induces an isomorphism
$\varphi : \eta_* \eta^* \cF'_0 \cong \eta_* F^{\oplus d}$.
Let $h' : X' \to X$ denote the morphism which sends
an $\cO_C$-submodule $\cF'$ of
$\eta_* \eta^* F^{\oplus d'}$ to the
$\cO_C$-submodule $\cF_{0,(i)}(\epsilon'(\cF')\infty) 
\oplus \varphi^{-1}(\cF')$ of $\eta_* F^{\oplus d}$.
For each $n \in \Z$, define a morphism
$G_n : X \to X$ by
sending an $\cO_C$-submodule $\cF$ of
$\eta_* \eta^* F^{\oplus d}$ to the
$\cO_C$-submodule $\cF_{0,(i)}((n+\epsilon(\cF))\infty) 
+ \cF$ of $\eta_* F^{\oplus d}$. Then the
argument in~\cite[p. 85--86]{Gra} shows that
$f$ and $|h'|\circ|h| \circ f$ are homotopic
for any map $f:Z \to |X|$ from a compact space $Z$
to $|X|$. Since the map $|h'|\circ|h| \circ f$ factors
through the contractible space $|X'|$, $f$ is
null-homotopic. Hence $|X|$ is contractible.
\end{proof}

\begin{proof}[Proof of Proposition~\ref{7_prop1b}]
For any simplex $\sigma$ in $\wt{X}_{\bK,\bullet}$,
the isotropy group $\Gamma_\sigma \subset \GL_d(F)$
is finite, as remarked in Section~\ref{sec:71}.
Hence by Lemma~\ref{7_contract}, both 
$H^*(Y_{\bK,\bullet}^{(\alpha),\cD,g},\Q)$ and
$H^*(Y_{\bK,\bullet}^{(\alpha'),\cD,g},\Q)$ are canonically
isomorphic to the same group $H^*(P_{\cD}(F),
\Map(P_{\cD}(\A)g/(g^{-1}P_{\cD}(\A^\infty)g \cap \bK),\Q))$ 
for any non-empty subset $\cD \subset \{1,\ldots, d-1\}$
and for $g \in \GL_d(\A^\infty)$.
This shows that $H^*(X_{\bK,\bullet}^{(\alpha),\cD},\Q)
\to H^*(X_{\bK,\bullet}^{(\alpha'),\cD},\Q)$ is an isomorphism.
Since the homomorphisms between the $E_1$-terms of the
spectral sequences (\ref{7_specseq}) for $\alpha$ and 
for $\alpha'$ is an isomorphism, $H^*(X_{\bK,\bullet}^{(\alpha)},\Q)
\to H^*(X_{\bK,\bullet}^{(\alpha')},\Q)$ is an
isomorphism.
\end{proof}

\subsection{Proof of Proposition \ref{7_prop2}}
\label{sec:pf2}
For $j=1,\ldots,r$, let $\bK_j 
\subset \GL_{d_i}(\A^\infty)$ denote the
image of $\bK \cap P_{\cD}(\A^\infty)$ by the composition
$P_{\cD}(\A^\infty) \to M_{\cD}(\A^\infty)
\to \GL_{d_i}(\A^\infty)$.

We define the continuous map
$\wt{\pi}_{\cD,j}: 
|\wt{Y}_{\bK,\bullet}^{(\alpha),\cD}| \to 
|\wt{X}_{\GL_{d_j},\bK_j,\bullet}|$
of topological spaces in the following way.
Let $\sigma$ be an $i$-simplex in 
$\wt{Y}_{\bK,\bullet}^{(\alpha),\cD}$.
Take a chain 
$\cdots \supsetneqq \cF_{-1} \supsetneqq \cF_0
\supsetneqq \cF_1 \supsetneqq \cdots$
of $\cO_C$-modules representing $\sigma$.
For $l\in \Z$ we set $\cF_{l,j} = \cF_{l,(i_j)}/\cF_{l,(i_{j-1})}$,
which is an $\cO_C$-submodule of $\eta_* F^{\oplus d_j}$.
We set $S_j = \{ l \in \Z \ |\ \cF_{l,j}\neq \cF_{l+1,j} \}$.
Define the map $\psi_j : \Z \to S_j$ as
$\psi_j(l) = \min \{ l'\ge l\ |\ l' \in S_j \}$.
Take an order-preserving bijection 
$\varphi_j :S_j \xto{\cong} \Z$.
For $l \in\Z$ set $\cF'_{l} =\cF_{\varphi_j^{-1}(l), j}$.
Then the chain $\cdots \supsetneqq \cF'_{-1} \supsetneqq \cF'_0
\supsetneqq \cF'_1 \supsetneqq \cdots$ defines a
simplex $\sigma'$ in $\wt{X}_{\GL_{d_j},\bK_j,\bullet}$.
We define a continuous map $|\sigma| \to |\sigma'|$
as the affine map sending the vertex of $\sigma$ 
corresponding to $\cF_l$ to the vertex of $\sigma'$
corresponding to $\cF'_{\varphi_j \circ \psi_j(l)}$.
Gluing these maps, we obtain a continuous map
$\wt{\pi}_{\cD,j}: 
|\wt{Y}_{\bK,\bullet}^{(\alpha),\cD}| \to 
|\wt{X}_{\GL_{d_j},\bK_j,\bullet}|$.
We set $\wt{\pi}_{\cD} =
(\wt{\pi}_{\cD,1},\ldots,\wt{\pi}_{\cD,r})
:|\wt{Y}_{\bK,\bullet}^{(\alpha),\cD}| \to 
\prod_{j=1}^{r} 
|\wt{X}_{\GL_{d_j},\bK_j,\bullet}|$.
This continuous map descends to the 
continuous map 
$\pi_{\cD}: |Y_{\bK,\bullet}^{(\alpha),\cD}| \to 
\prod_{j=1}^{r} |X_{\GL_{d_j},\bK_j,\bullet}|$.

If $g \in P_{\cD}(\A^\infty)$ and
$g^{-1}\bK g \subset \GL_d(\wh{A})$, then the isomorphism
$\xi_g :X_{\bK,\bullet} \xto{\cong}
X_{g^{-1}\bK g,\bullet}$ sends 
$Y_{\bK,\bullet}^{(\alpha),\cD}$ inside
$Y_{g^{-1}\bK g,\bullet}^{(\alpha -\beta_g),\cD}$.
If we denote by $(g_1,\ldots,g_r)$ 
the image of $g$ in $M_{\cD}(\A^\infty)
= \prod_{j=1}^r \GL_{d_j}(\A^\infty)$,
then the diagram
$$
\begin{CD}
|Y_{\bK,\bullet}^{(\alpha),\cD}| @>{\xi_g}>>
|Y_{g^{-1}\bK g,\bullet}^{(\alpha-\beta_g),\cD}| \\
@V{\pi_{\cD}}VV @V{\pi_{\cD}}VV \\
\prod_{j=1}^r |X_{\GL_{d_j},\bK_j,\bullet}|
@>{(\xi_{g_1},\ldots,\xi_{g_r})}>> \prod_{j=1}^r
|X_{\GL_{d_j},g_j^{-1} \bK_j g_j,\bullet}|
\end{CD}
$$
is commutative.

With the notations as above, 
suppose that the open compact
subgroup $\bK \subset \GL_d(\A^\infty)$
has the following property.
\begin{equation} \label{7_property}
\text{the homomorphism }
P_{\cD}(\A^\infty)\cap \bK \to
\bK_1 \times \cdots \times \bK_r
\text{ is surjective.}
\end{equation}
For a simplicial complex $X$, we set
$I_X = \Map(\pi_0(X),\Q)$, where $\pi_0(X)$ is 
the set of the connected components
of $X$. Let us consider the following commutative
diagram.
\begin{equation}\label{7_CD}
\begin{CD}
H^*(M_{\cD}(F), 
\Map(\prod_{j=1}^r \pi_0(X_{\GL_{d_j},\bK_j,\bullet}),\Q))
@>>>
H^*(P_{\cD}(F), I_{Y_{\bK,\bullet}^{(\alpha),\cD}})
\\
@VVV @VVV \\
H^*_{M_{\cD}(F)}
(\prod_{j=1}^r |\wt{X}_{\GL_{d_j},\bK_j,\bullet}|,\Q)
@>>> H^*_{P_{\cD}(F)}(|Y_{\bK,\bullet}^{(\alpha),\cD}|,\Q)\\
@AAA @AAA \\
H^*(\prod_{j=1}^r |X_{\GL_{d_j},\bK_j,\bullet}|,\Q)
@>>> H^*(|Y_{\bK,\bullet}^{(\alpha),\cD}|,\Q).
\end{CD}
\end{equation}
Here $H^*_{M_{\cD}(F)}$ and $H^*_{P_{\cD}(F)}$ denote
the equivariant cohomology groups.

\begin{prop}\label{7_prop3}
All homomorphisms in the above
diagram (\ref{7_CD}) are isomorphisms.
\end{prop}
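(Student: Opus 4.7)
The plan is to establish all six arrows in the diagram (\ref{7_CD}) by combining three ingredients: (i) the space $\prod_j|\wt{X}_{\GL_{d_j},\bK_j,\bullet}|$ is a disjoint union of contractible connected components, since each factor is a disjoint union of copies of the Bruhat-Tits building for $\GL_{d_j}$; (ii) the projection $\wt{\pi}_{\cD}$ has contractible fibers, which follows from Lemma~\ref{7_contract} applied after decomposing along cosets, so that each fiber parametrizes chains of $\cO_C$-modules with prescribed graded pieces and the contractibility argument of loc.\ cit.\ applies; and (iii) the stabilizers of simplices under the actions of $M_{\cD}(F)$ and $P_{\cD}(F)$ are finite, which is inherited from the arithmeticity discussed in Section~\ref{sec:71}, so that rational coefficients annihilate group cohomology of these stabilizers in positive degrees.

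First I would show the horizontal arrow in the middle row is an isomorphism. The map $\wt{\pi}_{\cD}$ is $P_{\cD}(F)$-equivariant when its target is viewed as an $M_{\cD}(F)$-space via $P_{\cD}(F)\twoheadrightarrow M_{\cD}(F)$. By (ii), its fibers are contractible, and the unipotent radical $N_{\cD}(F)$ acts along the fibers. The Leray spectral sequence for $\wt{\pi}_{\cD}$ (regarded as a $P_{\cD}(F)$-equivariant map) then collapses, and one obtains an isomorphism on equivariant cohomology; the $N_{\cD}(F)$-factor contributes nothing because the fibers are acyclic.

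Next, for the two lower vertical arrows, the finiteness of stabilizers together with $\Q$-coefficients yields an isomorphism between singular cohomology of the quotient and equivariant cohomology: the $E_2$-terms of the appropriate Cartan-Leray spectral sequence involve $H^*(\Gamma_\sigma,\Q)$ for finite isotropy groups $\Gamma_\sigma$, which vanish in positive degrees. For the two upper vertical arrows, the contractibility of connected components of $\prod_j|\wt{X}_{\GL_{d_j},\bK_j,\bullet}|$ (and, via (ii), of each fiber-wise lift relevant to $|\wt{Y}_{\bK,\bullet}^{(\alpha),\cD}|$) implies that equivariant cohomology collapses to group cohomology with coefficients in $\Map(\pi_0,\Q)$. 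A Shapiro-type identification, facilitated by property~(\ref{7_property}), then matches this with $H^*(M_{\cD}(F),\Map(\prod_j\pi_0(X_{\GL_{d_j},\bK_j,\bullet}),\Q))$ on the left and with $H^*(P_{\cD}(F),I_{Y_{\bK,\bullet}^{(\alpha),\cD}})$ on the right.

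The main obstacle, I expect, lies in the careful bookkeeping needed to use property~(\ref{7_property}) to match coefficient modules: specifically, one must verify that the surjectivity of $P_{\cD}(\A^\infty)\cap\bK\to\bK_1\times\cdots\times\bK_r$ induces a bijection $\pi_0(Y_{\bK,\bullet}^{(\alpha),\cD})\cong\prod_j\pi_0(X_{\GL_{d_j},\bK_j,\bullet})$ compatible with the $M_{\cD}(F)$-orbit structure and with the identification of both sides through the chain-of-$\cO_C$-modules description of simplices from Section~\ref{sec:locfree}. Once this combinatorial matching is in place, the isomorphisms in the diagram follow formally from (i)--(iii).
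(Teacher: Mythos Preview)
Your outline handles the vertical arrows correctly (contractibility of components plus finite isotropy over $\Q$), but there is a genuine gap in your treatment of the horizontal direction, and it is precisely where the real content lies.

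You assert that ``the $N_{\cD}(F)$-factor contributes nothing because the fibers are acyclic.'' This is a non sequitur. Acyclicity of fibers of $\wt{\pi}_{\cD}$ (even granting it, which Lemma~\ref{7_contract} does not directly give --- that lemma concerns connected components of $\wt{Y}$, not fibers of the map) would at best give an isomorphism of $P_{\cD}(F)$-equivariant cohomologies of source and target. To pass from $P_{\cD}(F)$-equivariant to $M_{\cD}(F)$-equivariant cohomology you must run Hochschild--Serre for $1\to N_{\cD}(F)\to P_{\cD}(F)\to M_{\cD}(F)\to 1$, and this requires $H^q(N_{\cD}(F),\,\cdot\,)=0$ for $q\ge 1$. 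The group $N_{\cD}(F)$ is infinite, so this is not automatic; the paper proves it by the characteristic-$p$ specific observation that $N_{\cD}(F)$ is a directed union of finite $p$-groups, hence has trivial rational cohomology in positive degrees. You never invoke this fact, and without it the argument does not go through.

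Relatedly, the ``main obstacle'' you identify is mis-stated: property~(\ref{7_property}) alone does \emph{not} yield a bijection $\pi_0(\wt{Y}_{\bK,\bullet}^{(\alpha),\cD})\cong\prod_j\pi_0(\wt{X}_{\GL_{d_j},\bK_j,\bullet})$. The left side is $P_{\cD}(\A^\infty)/(P_{\cD}(\A^\infty)\cap\bK)$ and the right side is $M_{\cD}(\A^\infty)/\prod_j\bK_j$; these differ by the unipotent coset space $N_{\cD}(\A^\infty)/(N_{\cD}(\A^\infty)\cap\bK)$. What the paper uses is that $N_{\cD}(F)$ is dense in $N_{\cD}(\A^\infty)$ (strong approximation for unipotent groups), so that taking $N_{\cD}(F)$-invariants on $\Map(\pi_0(\wt{Y}),\Q)$ collapses this factor. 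Together with the cohomology vanishing above, this is what makes the upper horizontal arrow an isomorphism. Your proposal omits both ingredients.
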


\begin{proof}
We prove that the upper horizontal arrow
and the four vertical arrows are isomorphisms.

First we consider the upper horizontal arrow.
\begin{lem}\label{thislemma}
For $q\ge 1$, the group
$H^q (N_{\cD}(F), I_{Y_{\bK,\bullet}^{(\alpha),\cD}})$
is zero.
\end{lem}
\begin{proof}[Proof of Lemma \ref{thislemma}]
For each $x \in N_{\cD}(F) \backslash 
\pi_0(Y_{\bK,\bullet}^{(\alpha),\cD})$,
take a lift $\wt{x} \in Y_{\bK,\bullet}^{(\alpha),\cD}$
of $x$ and let $N_x \subset N_{\cD}(F)$ denote the stabilizer
of $\wt{x}$. Then the group
$H^*(N_{\cD}(F), I_{Y_{\bK,\bullet}^{(\alpha),\cD}})$
is isomorphic to the direct product 
$$\prod_{x \in N_{\cD}(F) \backslash 
\pi_0(Y_{\bK,\bullet}^{(\alpha),\cD})}
H^*(N_x ,\Q).$$ We note that the group $N_{\cD}(F)$ is
a union $N_{\cD}(F) = \bigcup_{i} U_i$ of 
finite subgroups of $p$-power order where $p$ is 
the characteristic of $F$.  (This follows easily from
\cite[p.2, 1.A.2 Lemma]{KeWe} or from
\cite[p.60, 1.L.1 Theorem]{KeWe}.)
Hence $N_x = \bigcup_{i} (U_i \cap N_x)$. 
Therefore, for an $N_x$-module $M$,
$H^{j}(N_x,M) =0$ for $j\ge 1$ if
the projective system $(H^0(U_i \cap N_x ,M))_i$ 
satisfies the Mittag-Leffler condition. In particular
we have $H^{j}(N_x,\Q) =0$ for $j\ge 1$.
Hence the claim follows.
\end{proof}
We note that $\pi_0(X_{\GL_{d_j},\bK_j,\bullet})$ is
canonically isomorphic to
$\GL_{d_i}(\A^\infty)/\bK_j$
for $j=1,\ldots,r$, and Lemma~\ref{7_contract} implies
that $I_{Y_{\bK,\bullet}^{(\alpha),\cD}}$ is
canonically isomorphic to
$\Map(P_{\cD}(\A^\infty)/
(P_{\cD}(\A^\infty) \cap \bK),\Q)$.
Since $N_{\cD}(F)$ is dense in 
$N_{\cD}(\A^\infty)$, the group
$H^0(N_{\cD}(F), I_{Y_{\bK,\bullet}^{(\alpha),\cD}})$
is canonically isomorphic to the group
$\Map(M_{\cD}(\A^\infty)/\prod_{j=1}^r \bK_j,\Q)$.
Hence the upper horizontal arrow
of the diagram (\ref{7_CD}) is an isomorphism.

Next we consider the two vertical arrows.
Each connected component of
$\wt{X}_{\GL_{d_j},g_j^{-1} \bK_j g_j,\bullet}$
is contractible since it is isomorphic to 
the Bruhat-Tits building for $\GL_{d_j}$.
Recall that the simplicial complex
$X_{\GL_{d_j},g_j^{-1} \bK_j g_j,\bullet}$
is the quotient of 
$\wt{X}_{\GL_{d_j},g_j^{-1} \bK_j g_j,\bullet}$
by the action of $\GL_{d_j}(F)$.
For any simplex $\sigma$ in 
$\wt{X}_{\GL_{d_j},g_j^{-1} \bK_j g_j,\bullet}$,
the isotropy group $\Gamma_\sigma \subset \GL_{d_j}(F)$
of $\sigma$ is finite, as remarked in Section~\ref{sec:71}.
Hence the left two vertical arrows in the diagram (\ref{7_CD})
are isomorphisms.
Similarly, bijectivity of the two right
vertical arrows in the diagram (\ref{7_CD})
follows from Lemma~\ref{7_contract}. Thus we have
a proof of Proposition~\ref{7_prop3}.
\end{proof}

\begin{proof}[Proof of Proposition~\ref{7_prop2}]
Let us consider the lower horizontal arrow
in the diagram (\ref{7_CD}). By Proposition~\ref{7_prop3} 
it is an isomorphism. We note that the compact
open subgroups $\bK \subset \GL_d(\wh{A})$
with property (\ref{7_property}) form a 
cofinal subsystem of the
inductive system of all open compact subgroups
of $\GL_d(\A^\infty)$. Therefore,
passing to the inductive limits with respect to
$\alpha$ and $\bK$ with property (\ref{7_property}), we have
$\varinjlim_\bK \varinjlim_\alpha
H^*(Y_{\bK,\bullet}^{(\alpha),\cD},\Q)
\cong \bigotimes_{j=1}^r \varinjlim_{\bK_j} 
H^*(X_{\GL_{d_j},\bK_j,\bullet},\Q)$ as desired.
\end{proof}

\vspace{1cm}
Satoshi Kondo  \\
Kavli Institute for the Physics and Mathematics of the Universe \\
The University of Tokyo \\
5-1-5 Kashiwanoha, Kashiwa, 277-8583, Japan \\
Tel.: +81-4-7136-4940\\
Fax: +81-4-7136-4941\\
satoshi.kondo@ipmu.jp             \\
          
\noindent Seidai Yasuda \\
              Department of Mathematics, Graduate School of Science, Osaka University \\
s-yasuda@math.sci.osaka-u.ac.jp


\begin{thebibliography}{Har-aaa}
\bibitem[Ab-Br]{Ab-Br}
Abramenko, P., Brown, K.\ S.: 
Buildings.\ Theory and applications.\ 
Graduate Texts in Mathematics 248.\ 
Springer, Berlin (2008)

\bibitem[As-Ru]{AR} Ash, A., Rudolph,  L.:
The modular symbol and continued fractions 
in higher dimensions.\
Invent.\ Math.\ 55,  
no.\ 3, 241--250 (1979)


\bibitem[Bo]{Borel}Borel, A.:
Admissible representations of a semi-simple group
over a local field with vectors fixed under an Iwahori
subgroup.\
Invent.\ Math.\ 
35, 233--259 (1976)

\bibitem[Bo-Ja]{BJ} Borel, A., Jacquet, H.:
Automorphic forms and automorphic representations.\
In: Automorphic forms, representations and L-functions, 
Proc.\ Symp.\ Pure Math. 
Am.\ Math.\ Soc., Corvallis/Oregon 1977, 
Proc.\ Symp.\ Pure Math.\ 33, 1, pp.\ 189--202 (1979)

\bibitem[Br-Ti]{BT}Bruhat, F., Tits, J.:  
Groupes reductifs sur un corps local.\ 
Publ.\ Math., Inst.\ Hautes \'Etud.\ Sci.\ 41, 5--251 (1972)

\bibitem[De-Hu]{De-Hu}Deligne, P., Husemoller, D.:
Survey of {D}rinfel'd modules.\
In:
Current trends in arithmetical algebraic geometry,
Proc.\ Summer Res.\ Conf.,
Arcata/Calif.\ 1985,
Contemp.\ Math.\ 67, pp.\ 25--91 (1987)

\bibitem[Gr]{Gra}Grayson,  D.\ H.:
Finite generation of $K$-groups of a curve over a 
finite field (after Daniel Quillen).
In:
Algebraic $K$-theory, Part I (Oberwolfach, 1980), 
pp.\ 69--90,
Lect.\ Notes Math., 966,
Springer, Berlin-New York
(1982)

\bibitem[Ha]{Haefliger}Haefliger, A.:
Introduction to piecewise linear intersection homology. 
In:
Intersection cohomology (Bern, 1983), pp.\ 1--21,
Progr.\ Math.\ 50, Birkh\"auser Boston, Boston 
(1984) 


\bibitem[Har]{Harder}Harder, G.:
Die Kohomologie S-arithmetischer Gruppen \"uber Funktionenk\"orpern. (German)
Invent.\ Math.\ 42, 135--175 (1977)


\bibitem[Har-Na]{HN}Harder,  G., Narasimhan, M.~S.: 
On the cohomology groups of moduli spaces of 
vector bundles on curves.
Math.\ Ann.\ 212, 215--248
 (1974/75)


\bibitem[Hatc]{Hatcher}
Hatcher, A.:
Algebraic topology.
Cambridge University Press,
Cambridge
(2002)

\bibitem[Hatt]{Hattori}
Hattori, A.:
Topology, I-III. 3rd ed. (Japanese)
Iwanami Shoten Kiso S\=ugaku, 19.\ Kikagaku, ii.\ 
Iwanami Shoten, Tokyo 
(1987)


\bibitem[Iv]{Iversen}
Iversen, B.: 
Cohomology of sheaves.\ 
Lect.\ Notes Ser., 
Aarhus Univ.\ 55 
(1984)

\bibitem[KeWe]{KeWe}
Kegel, O.\ H., Wehrfritz, B.\ A.\ F.:
Locally finite groups.\
North-Holland Mathematical Library.\ Vol.\ 3.\ 
Amsterdam-London: North- Holland Publishing Comp.; New York: 
American Elsevier Publishing Comp., Inc.\ (1973)
 
\bibitem[Ko-Ya]{KY:Zeta elements}
Kondo, S., Yasuda, S.:
Zeta elements in the $K$-theory of Drinfeld modular varieties. 
Math.\ Ann.\ 354, no.\ 2, 529--587 (2012)

\bibitem[La1]{Laumon1}
Laumon, G.: 
Cohomology of Drinfeld modular varieties.\ 
Part 1: Geometry, counting of points and local harmonic analysis.\ 
Cambridge Studies in Advanced Mathematics.\ 41.\ 
Cambridge University Press,
Cambridge 
(1996)

\bibitem[La2]{Laumon2}
Laumon, G.: 
Cohomology of Drinfeld modular varieties.\
Part II: 
Automorphic forms, trace formulas and Langlands correspondence.\ 
With an appendix by Jean-Loup Waldspurger.\
Cambridge Studies in Advanced Mathematics.\ 56.\ 
Cambridge University Press,
Cambridge 
(1997)

\bibitem[Mo-Wa]{MW} Moeglin, C., Waldspurger, J.-L.:
Le spectre r\'esiduel de $GL(n)$.\
Ann.\ Scient.\ \'Ec.\ Norm.\ Sup.\ $4^e$ s\'erie.\
22, 605--674
(1989)

\bibitem[We1]{We2}Werner, A.:
Compactification of the {B}ruhat-{T}its building of {PGL} by lattices of smaller rank.\
Doc.\ Math., J.\ DMV 6, 315--341 (2001)

\bibitem[We2]{We1}Werner, A.:
Compactification of the {B}ruhat-{T}its building of {PGL} by seminorms.\
Math.\ Z. 248, no.\ 3, 511--526 (2004)


\bibitem[Ze]{Zelevinsky}
Zelevinsky, A.\ V.: 
Induced representations of reductive $\mathfrak{p}$-adic
groups.\ II.\ On irreducible representations of $\GL(n)$.\
Ann.\ Sci.\ E.\ N.\ S.\ 4e s\'erie. 13, no.\ 2, 
165--210 (1980)

\end{thebibliography}
\end{document}